\DeclareMathOperator*{\argmin}{argmin}
\DeclareMathOperator*{\argmax}{argmax}
\DeclareMathOperator\arctanh{arctanh}
\theoremstyle{plain}
\newtheorem{thm}{Theorem}[section] 
\newtheorem{corollary}{Corollary}[thm]
\newtheorem{lemma}[thm]{Lemma}
\newtheorem{prop}[thm]{Proposition}
\theoremstyle{definition}
\newtheorem{defn}[thm]{Definition} 
\def\thmheadassumption#1#2#3{%
	\thmname{#1}\thmnumber{\@ifnotempty{#1}{ }\@upn{#2}}%
	\thmnote{ {\the\thm@notefont[#3]}}}
\newtheoremstyle{assumption}
{}
{}
{\itshape}
{}
{\bfseries}
{.}
{ }
{\thmheadassumption{#1}{(A#2)}{#3}}
\theoremstyle{assumption}
\newtheorem{assumption}{}
\newcommand{\assumptionref}[1]{(A\ref{#1})}
\theoremstyle{remark}
\newtheorem*{rmk}{Remark}
\newtheorem*{notation}{Notation}
\newcommand{\R}{\mathbb{R}}
\newcommand{\N}{\mathbb{N}}
\newcommand{\Prob}{\mathbb{P}}
\newcommand{\E}{\mathbf{E}}
\newcommand{\dx}{\, \mathrm{d}x}
\newcommand{\ds}{\, \mathrm{d}s}
\newcommand{\dt}{\, \mathrm{d}t}
\newcommand{\di}{\, \mathrm{d}}
\newcommand{\norm}[1]{\left\lVert #1 \right\rVert}
\newcommand{\probspace}{(\Omega, \mathcal{F}, \mathbb{P})}
\newcommand{\expect}[1]{\E \left[ #1 \right]}
\newcommand{\hilbert}{\mathcal H}
\newcommand{\subalign}[1]{%
	\vcenter{%
		\Let@ \restore@math@cr \default@tag
		\baselineskip\fontdimen10 \scriptfont\tw@
		\advance\baselineskip\fontdimen12 \scriptfont\tw@
		\lineskip\thr@@\fontdimen8 \scriptfont\thr@@
		\lineskiplimit\lineskip
		\ialign{\hfil$\m@th\scriptstyle##$&$\m@th\scriptstyle{}##$\hfil\crcr
			#1\crcr
		}%
	}%
}
\begin{document}
	
	\title{Exponential contractions and robustness for approximate Wonham filters}
	
	\author{Samuel N. Cohen\thanks{Mathematical Institute, University of Oxford and Alan Turing Institute, {\tt cohens@maths.ox.ac.uk}.}
		\and
		Eliana Fausti\thanks{Mathematical Institute, University of Oxford, {\tt fausti@maths.ox.ac.uk}.}}
	
	\date{\today}
	
	\maketitle
	
	\begin{abstract}
		We consider the problem of estimating the state of a continuous-time Markov chain from noisy observations.
		We show that the corresponding optimal filter is strictly contracting pathwise, when considered in the Hilbert projective space, and give explicit deterministic and pathwise rates of convergence. Using this, we provide alternative proofs of the robustness of optimal filters, improving on known error estimates, and derive rigorous and computable error bounds for approximate filters.
	\end{abstract}
	
	\begin{adjustwidth}{0.6cm}{0.6cm}
		\begin{flushleft}
			\textbf{MSC}: 93E11, 62M05, 60J55. 
			
			\textbf{Keywords}: Nonlinear filtering, stability, model robustness, approximate filters, error bounds, Hilbert projective metric, local times.
		\end{flushleft}
	\end{adjustwidth}

	\section{Introduction}
	Estimating a random hidden process from incomplete, noisy observation is a common problem arising in engineering, signal processing, finance and many other applications. The general setting consists of a \textit{signal (or state) process} $X$ evolving in time (typically taken to be Markov), which cannot be measured directly, but needs to be estimated using the information given by an \textit{observation process} $Y$, whose dynamics depend on $X$. Computing and analyzing the optimal solution to this problem is the main objective of the theory of stochastic filtering.
	
	Stochastic filtering is a classical topic in stochastic analysis, and optimal filters have been derived in various contexts, e.g. in continuous or discrete time, with finite or infinite state-space, and so on. The setting of linear underlying dynamics, giving rise to the famous Kalman--Bucy filter \cite{kal60, kalbucy61}, was the first to be considered in continuous time, and is now well understood. Nonlinear filtering, on the other hand, presents challenges from both the theoretical and practical perspective (we refer to Bain and Crisan \cite{bain09} for an exposition of nonlinear filtering).
	
	The optimal nonlinear filter is the solution to a \textit{nonlinear stochastic} (depending on the context, \textit{partial}) \textit{differential equation} called the \textit{Kushner--Stratonovich} equation. In almost all practical applications, however, it cannot be computed directly:~for example, the model for $X$ and $Y$, which the filtering equations explicitly depend on, might have misspecified parameters, or be completely unknown. Moreover, even when the true model is available, solving the filtering equations numerically can be intractable, due to the high (in many cases, infinite) dimensional and non-local nature of the problem. More often than not, approximate filters, rather than the optimal filter, are employed. This begs the questions of whether or not these approximations are reliable, and how we can quantify their error with respect to the optimal filter. These questions are central in the study of the \textit{robustness} of the nonlinear filter.
	
	In this paper, we will focus on the case of finite state-space nonlinear filtering in continuous time. The optimal filter in this case is the solution to an SDE, and it is sometimes referred to as the \textit{Wonham filter} \cite{won65}. Significant progress on the robustness of the Wonham filter was made by Chigansky and Van Handel \cite{chigansky07} and Van Handel \cite{vanhandel_phd}. In these works, one considers the $L^1$-error between the true Wonham filter and the Wonham filter with misspecified model parameters. Following an approach that relies on computing bounds for the derivatives of the filter, they prove that the error stays finite over an infinite time horizon, and vanishes as the misspecified parameters are sent to the true ones. The method in \cite{chigansky07} could potentially be used to compute error bounds for some more general approximate filters, and not only those given by misspecification in the underlying model. However, the estimates in \cite{chigansky07} are not tight enough to provide useful quantitative bounds (see \cite[Remark~2.8]{chigansky07} and \cite[Remark~3.3.8]{vanhandel_phd}) so these results remain primarily of qualitative interest.
	
	The robustness question is invariably linked to the issue of \textit{stability} of the nonlinear filter. Compared to robustness, stability is only concerned with the error due to misspecification of the initial conditions of the filtering equations. If the error between the true filter and the `wrongly initialized' filter decays to zero as time passes, then the filter `forgets' the initial error and is called \textit{asymptotically stable}. This has consequences for approximate filtering: intuitively, in a discretized-time setting, if the nonlinear filter is stable, using an approximate filter is essentially the same as using the optimal filter, but introducing an  approximation error at each time step. If all the approximation errors are bounded, stability ensures that they are also `forgotten' as time goes on, so that the total error stays bounded, and we recover robustness-type estimates.
	
	In discrete-time, finite state-space nonlinear filtering, this is indeed how robustness estimates have been derived (see Budhiraja and Kushner \cite{Budhiraja98}, Le Gland and Mevel \cite{legland00_hmm} and Le Gland and Oudjane \cite{legland04}). The main difference with the continuous-time setting is that the stability estimates available in the literature for the Wonham filter are not strong enough to directly apply this kind of methodology. Indeed, to pull off this argument in continuous time, one would need exponential (or similar) contraction of the stability error. The first goal of this paper will be to establish such a contraction result for the Wonham filter. The second objective is to use our stability estimates to provide computable error bounds for approximate filters.
	
	\subsection{Discussion of known results}
	
	Filtering stability has been an  active field of study since the 1990's. A key paper in the literature is \cite{ocone96}, in which Ocone and Pardoux establish a relationship between the stability of the Kalman filter and detectability/stabilizability of the signal-observation linear control system. Their arguments for stability in the nonlinear setting, however, rely on a result by Kunita \cite{kunita71}, which was later found to contain a mistake (see Baxendale, Chigansky and Liptser \cite[Section 2]{bax-chiga-lip04} for a detailed explanation and a counterexample, and Budhiraja \cite{Budhiraja03} for an analysis of its relevance in the context of nonlinear filtering stability). The gap in Kunita's proof was addressed by Van Handel, who established the necessary conditions for the stability of the nonlinear filter in different settings (for ergodic signals in discrete and continuous time in \cite{vanhandel09_AP}, non-ergodic signal with compact state-space in \cite{vanhandel09_PTRF}, and with Polish state-space in \cite{vanhandel09_AAP}). More recently, in Kim, Mehta and Meyn \cite{kim2021conditional} and Kim and Mehta \cite{kim2021dual}, stability of the Wonham filter is shown to be equivalent to stabilizability of a dual control problem, in an extension of \cite{ocone96} to the nonlinear case. We refer the interested reader to Chigansky \cite{chigansky06survey} for an extensive review of nonlinear filtering stability results (in discrete time with finite state-space) and to \cite[Part~3]{oxford_handbook} for a broad collection of survey papers. Of particular relevance to our setting, Chigansky, Liptser and Van Handel \cite{chigansky_handbook} gives an accessible introduction to the stability results of \cite{vanhandel09_PTRF,vanhandel09_AP,vanhandel09_AAP}.
	
	While the above results guarantee stability of the filter in the strongest possible generality (and under the weakest possible assumptions), their qualitative nature makes them unsuitable for understanding general approximation errors. On the other hand, if one is willing to impose relatively strong ergodicity assumptions on the signal process, there are explicit decay rates available in the literature, at least for the particular case of the Wonham filter. Delyon and Zeitouni  \cite{delyon1991} introduced the study of the top Lyapunov exponent for the Wonham filter, and proved that it is negative under certain conditions on the model parameters. This method was expanded by Atar and Zeitouni \cite{atar-zeitouni-lyapunov97, atar-zeitouni97}, who, under a fairly strong mixing assumption for the signal, compute an explicit exponential decay rate for the stability error. Applying the techniques of \cite{atar-zeitouni97},  Baxendale, Chigansky and Lipster weakened the ergodicity assumptions slightly by proving a.s.~negativity of the decay rate if all the states of $X$ communicate \cite[Theorem~4.1]{bax-chiga-lip04} (although we lose an explicit rate). Finally, by working with the \textit{smoother process} (as described in e.g.~Liptser and Shiryayev \cite[Theorem~9.5]{lipster77}), they provide an explicit exponential rate of decay for a mixing signal in terms of its ergodic distribution \cite[Theorem~4.2]{bax-chiga-lip04}, and a \textit{non-asymptotic} exponential bound for the stability error \cite[Theorem~4.3]{bax-chiga-lip04}, with the same decay rate as \cite{atar-zeitouni-lyapunov97, atar-zeitouni97}.
	
	As far as we are aware, the bound in \cite[Theorem~4.3]{bax-chiga-lip04} is the only non-asymptotic bound available in the literature for the stability error of the Wonham filter in continuous time. The prefactor to the exponential decay term is proportional to the dimension of the Wonham SDE and the Radon--Nikodym derivatives of the true and the `wrong' initial distribution, and it is far too large for the bound to be useful from a quantitative point of view. Van Handel improves it significantly (although the result still remains far from a contraction), and the best estimate for the prefactor is found by combining \cite[Proposition~3.5]{chigansky07} and \cite[Corollary~2.3.2]{vanhandel_phd}. This stability result is central in the robustness analysis for the Wonham filter carried out in \cite{chigansky07}. On the other hand, the robustness results for the nonlinear filter in discrete time \cite{Budhiraja98,legland00_hmm, legland04} that we mentioned previously build on the work on stability by Atar and Zeitouni (in \cite{atar-zeitouni-lyapunov97, atar-zeitouni97} the analysis is carried out for both discrete and continuous time settings).
	
	The fundamental contribution of \cite{atar-zeitouni-lyapunov97, atar-zeitouni97} is to introduce the use of the Hilbert projective distance (see \cite[Eq.~9]{atar-zeitouni97}, or \eqref{eq:hilbert_metric} below) as a metric on the space of probability measures to carry out stability estimates for the nonlinear filter. A key advantage of using the Hilbert metric is that positive linear operators contract under this distance:~this is a result by Birkhoff (see \cite{birkhoff57} or \cite[Chapter~XVI]{birkhoff_lattice}). The work of Seneta on the product of positive linear operators (see \cite{seneta81}, or \cite[Chapter~3]{seneta_matrices}), which encompasses the analysis of the ergodicity of discrete-time Markov chains, is particularly illuminating for understanding how powerful a tool the Hilbert distance can be when used in the right context. Recalling that the generator of a discrete-time Markov chain is a stochastic matrix, Birkhoff's and Seneta's works make the stability results for discrete-time nonlinear filtering intuitively straightforward.
	
	Atar and Zeitouni provide asymptotic rates for the decay of the stability error of the filter, for both the discrete and continuous time case. Building on these ideas, and on Seneta's work, Le Gland and Mevel \cite{legland00_matrices, legland00_hmm}, and then Le Gland and Oudjane \cite{legland04} proved non-asymptotic and non-logarithmic stability bounds for the discrete time setting, conditional on a strong mixing assumption for the signal process. In \cite{legland04}, they are also able to tackle the issue of robustness of the nonlinear filter (in discrete time) and in particular they study the global error of interacting particle approximations to the filtering process. Our results in this paper follow roughly along the same lines, although in the continuous time setting. Moreover, our approach is fundamentally different from that in \cite{atar-zeitouni-lyapunov97, atar-zeitouni97, legland00_hmm, legland00_matrices, legland04};~the only common aspect is the use of the Hilbert metric in the stability analysis.
	
	\subsection{Main contributions and organization of the paper}
	Our first main contribution is an exponential contraction estimate for the stability error of the continuous-time, finite state-space nonlinear filter, in Hilbert distance (see Theorem~\ref{thm:contraction}). In fact, our statement is stronger, as we can prove contractivity of the hyperbolic tangent of the Hilbert distance, which directly implies the former. Both of these are, to the best of our knowledge, new results, which improve significantly on the quantitative estimates for the error available in the literature. We also present an alternative way to study the stability error of the continuous time filter in Hilbert distance, which does not rely on Atar and Zeitouni's arguments. Instead, inspired by Amari \cite{amari16}, we will introduce a change of coordinates from the probability simplex to $\R^n$, and study the evolution of the Wonham filter in the new coordinate system. As we will see, our arguments will present some similarities with the proof of \cite[Theorem~4.3]{bax-chiga-lip04}, despite a different approach.
	
	Our second main contribution is a robustness-type estimate for the Wonham filter (see Theorem~\ref{thm:expected_hilbert_bounds}). Compared to \cite{chigansky07}, we state our error bounds for a general approximate filter, and in terms of the Hilbert distance. Since the Hilbert distance is stronger than the $L_1$-norm, which is used in \cite{chigansky07}, the error bounds we provide are tighter (although still not optimal, as we will discuss in Section~\ref{sec:optimality_rate} and Section~\ref{sec:numerics_approx}). We also believe our proof methodology to be interesting in its own right, being far simpler than the arguments in \cite{chigansky07}:~it relies only on standard stochastic analysis tools, while in \cite{chigansky07} the authors need Malliavin calculus to deal with anticipative stochastic integrals.
	
	Finally, our findings in Theorem~\ref{thm:contraction} suggest that the hyperbolic tangent of the Hilbert distance (instead of simply the Hilbert distance) might be the optimal metric for studying the error of approximate filters. In the particular case when the approximate filter is chosen so that the stochastic term of the Wonham SDE is matched exactly, this yields tighter, pathwise bounds for the error, which we prove in Theorem~\ref{thm:pathwise_decay_approx_error}.
		
	The paper is organized as follows:~in the next section we set-up the filtering equations and define the Hilbert norm, before stating our assumptions and main results. Section \ref{sec:contraction} is dedicated to the proof of Theorem~\ref{thm:contraction} and some discussion of our stability results. Section \ref{sec:robustness} is split in two parts:~in the first half we recover, in a way, Chigansky and Van Handel's results on robustness with respect to misspecified model parameters (see Theorem~\ref{thm:robustness_misspecified_model_params} and compare with \cite[Theorem~1.1]{chigansky07});~in the second half we prove the error bounds for a general approximate filter given in Theorem~\ref{thm:expected_hilbert_bounds} as well as the pathwise bounds of Theorem~\ref{thm:pathwise_decay_approx_error}, and we also present and discuss some numerical experiments.
	
	\section{Filtering set-up and main results}\label{sec:filtering_setup}
	
	Let $\probspace$ be a probability space with a filtration $\{ \mathcal{F}_{t}, t \geq 0 \}$ satisfying the usual conditions. Consider an $\{\mathcal{F}_{t}\}$-adapted continuous-time, time-homogeneous Markov chain $X = (X_t)_{t \ge 0}$ with finite state-space $\mathbb{S} = \{ a_0, \dots, a_n \}$, and associated transition intensity matrix $Q = (q_{ij}) \in \R^{(n+1)\times(n+1)}$.  We let $\mathcal{M}^+(\mathbb{S})$ and $\mathcal{P}(\mathbb{S})$ denote respectively the non-negative measures and the probability measures on  $\mathbb{S}$. Let the initial distribution of $X$ be given by $\mu^i = \Prob (X_0 = a_i)$.
	
	Recall that the $Q$-matrix is defined as the matrix of transition rates such that its entries for each row sum to 0, and its off-diagonal entries are non-negative, i.e.~$\sum_{j} q_{ij} = 0$ for all $i$, and $q_{ij} \geq 0$ for all $i,j \le n+1, \: i \neq j$, and
	\begin{equation*}
	M_{t}^{\varphi}=\varphi\left(X_{t}\right)-\varphi\left(X_{0}\right)-\int_{0}^{t} Q \varphi\left(X_{s}\right) \mathrm{d} s, \quad t \geq 0
	\end{equation*}
	is an $\{\mathcal{F}_{t}\}$-adapted, right-continuous martingale for all bounded functions $\varphi: \mathbb{S} \rightarrow \R$.
 
	Let $h=(h_{i})_{i=1}^{d} : \mathbb{S} \rightarrow \mathbb{R}^{d}$ be a bounded function and $\sigma \neq 0$. Suppose $W$ is a standard $\{\mathcal{F}_{t}\}$-adapted $d$-dimensional Brownian motion independent of $X$, and let $Y = (Y_{t})_{t\ge 0}$ be the process satisfying the SDE
	\begin{equation}\label{eq:observation}
	Y_{t}=Y_{0}+\int_{0}^{t} h\left(X_{s}\right) \mathrm{d} s+\sigma W_{t}.
	\end{equation}
	Let $\{\mathcal{Y}_{t}\}_{t\ge0}$ be the (completed) natural filtration generated by the observation process $Y$. This describes the information available from observing $Y$ in the time-interval $[0,t]$.
	
	By common practice, we identify the state-space $\mathbb{S}$ with the standard basis $\{e_0, \dots, e_n \}$ for $\R^{n+1}$. Denote by $\pi_{t} = \expect{X_t | \mathcal{Y}_t}$ the conditional expectation of $X$ given  $\mathcal{Y}_{t}$. In other words, by abuse of notation, $\pi_t^i = \Prob (X_t = a_i | \mathcal{Y}_t)$.
	
	The process $\pi_{t}$ satisfies the Wonham form of the \textit{Kushner--Stratonovich  equation} (see e.g. \cite[Eq.~3.53]{bain09}):
	\begin{equation}\label{eq:wonhamNorm_d_dim}
	\di \pi_{t} = Q^{\top} \pi_{t} \dt + \frac{1}{\sigma^2}\sum_{k=1}^{d} \left( H^{k} - \pi_{t}^{\top}h_{k} \, \mathbb{I}_{n+1} \right) \pi_{t} \left( \di Y^{k}_{t} - \pi_{t}^{\top}h_{k} \dt \right), \quad \pi_0 = \mu,
	\end{equation}
	where, for $k=1, \ldots, d$, $H^{k}=\operatorname{diag}\left(h_{k}(a_i)\right)$ is an $(n+1)\times(n+1)$-dimensional diagonal matrix and $\mathbb{I}_{n+1}$ is the identity matrix. Note that \eqref{eq:wonhamNorm_d_dim} is initialized at $\mu = \textrm{law}(X_0)=\expect{X_0}$.
	
	The probabilities $\pi_t$ for $t \ge 0$ are $(n+1)$-dimensional (column) vectors, so \eqref{eq:wonhamNorm_d_dim} is a $(n+1)$-dimensional nonlinear SDE. In fact, since the components $\pi_t^i$ must sum to 1 for all $t \ge 0$, the SDE \eqref{eq:wonhamNorm_d_dim} describes a flow on the $n$-dimensional probability simplex $\mathcal{S}^n$, where
	\begin{equation*}
	\mathcal{S}^n = \bigg\{ x \in \R^{n+1} \, : \, \sum_i x_i = 1, \, x_i \ge 0       \bigg\}.
	\end{equation*}
	We write $\mathring{\mathcal{S}}^n$ for the interior of the simplex, that is, $x\in \mathring{\mathcal{S}}^n$ if $x\in \mathcal{S}^n$ and $x^i>0$ for all $i$.
	
	We conclude our set-up by introducing our choice of metric. Given two non-negative measures $\mu, \nu \in \mathcal{M}^+(\mathbb{S})$ expressed as non-negative vectors in $\R^{n+1}$, the Hilbert projective distance $\hilbert$ is defined by
	\begin{equation}\label{eq:hilbert_metric}
	\hilbert(\mu,\nu) = \left\{ \begin{array}{ll}\vspace{2pt}
	\log \left( \frac{\max_{j : \nu^j > 0} \frac{\mu^j}{\nu^j}}{\min_{i : \nu^i > 0} \frac{\mu^i}{\nu^i}} \right),
	& \mu \sim \nu, \\
	\infty, & \mu \nsim \nu.
	\end{array} \right.
	\end{equation}
	The Hilbert distance is a pseudo-metric for $\mathcal{M}^+(\mathbb{S})$:~it is non-negative, symmetric, satisfies the triangle inequality, and $\hilbert(\mu, \nu) = 0$ if and only if $\nu = c \mu$ for some constant $c \in \R^+$. It is a metric on the probability simplex $\mathcal{S}^n$. We refer to \cite[Chapter~XVI]{birkhoff_lattice} and \cite[Chapter~3]{seneta_matrices} for further discussion. The following property relating the Hilbert metric to the Euclidean metric on $\mathbb{R}^{n+1}$ will prove useful.

	\begin{lemma}\label{lemma:hilbertsemicts}
 For any $(\mu,\nu)\in \mathcal{S}^n \times \mathcal{S}^n$, it holds that
  \begin{itemize}
  
  \item[(i)] $\liminf_{m \to \infty}\mathcal{H}(\mu_m, \nu_m)\geq \mathcal{H}(\mu,\nu)$, for all sequences $(\mu_m, \nu_m)\to (\mu,\nu)$ converging in the Euclidean metric; 
  \item[(ii)] there exists a sequence $(\mu_m, \nu_m)\in \mathring{\mathcal{S}}^n \times \mathring{\mathcal{S}}^n$ such that $\lim_{m \to \infty}\mathcal{H}(\mu_m,\nu_m)= \mathcal{H}(\mu,\nu)$,  and $(\mu_m, \nu_m)\to (\mu, \nu)$ in the Euclidean metric;
  \item[(iii)] if $\mu,\nu\in \mathring{\mathcal{S}}^n$, then $\lim_{m \to \infty}\mathcal{H}(\mu_m, \nu_m)= \mathcal{H}(\mu,\nu)$, for all sequences $(\mu_m, \nu_m)\to (\mu,\nu)$ converging in the Euclidean metric.
  \end{itemize}
	\end{lemma}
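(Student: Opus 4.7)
Throughout, let $\mu_m, \nu_m \to \mu, \nu$ in the Euclidean metric on $\mathcal{S}^n$, and write $S_\mu = \{i: \mu^i > 0\}$, $S_\nu = \{i: \nu^i > 0\}$.

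For (iii), the plan is just to observe that on the open set $\mathring{\mathcal{S}}^n \times \mathring{\mathcal{S}}^n$, the ratios $\mu^j/\nu^j$ are all strictly positive and jointly continuous in $(\mu, \nu)$. Since $\mathring{\mathcal{S}}^n$ is open in $\mathcal{S}^n$, eventually $(\mu_m, \nu_m) \in \mathring{\mathcal{S}}^n \times \mathring{\mathcal{S}}^n$, and $\mathcal{H}(\mu_m, \nu_m) = \log(\max_j \mu_m^j/\nu_m^j) - \log(\min_j \mu_m^j/\nu_m^j)$ is continuous in $(\mu_m, \nu_m)$; this yields the limit.

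For (i), I would split into the cases $\mu \sim \nu$ and $\mu \nsim \nu$. In the first case, let $S = S_\mu = S_\nu$. For each $i \in S$ we have $\nu_m^i \to \nu^i > 0$ and $\mu_m^i \to \mu^i > 0$, so $\nu_m^i > 0$ eventually and $\mu_m^i/\nu_m^i \to \mu^i/\nu^i$. Hence for indices restricted to $S$, the max and min ratios converge to their limits. Since adding indices outside $S$ can only enlarge the range of ratios (while ignoring non-equivalent $\mu_m, \nu_m$, which give $+\infty$), we get $\liminf_m \mathcal{H}(\mu_m, \nu_m) \geq \mathcal{H}(\mu, \nu)$. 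In the second case $\mathcal{H}(\mu, \nu) = \infty$, and WLOG there is $i$ with $\mu^i > 0$, $\nu^i = 0$; if $\nu_m^i = 0$ infinitely often then $\mathcal{H}(\mu_m, \nu_m) = \infty$ along that subsequence, else $\mu_m^i/\nu_m^i \to \infty$, while the denominator $\min_j \mu_m^j/\nu_m^j$ stays bounded above by $(n+1)$ (since some coordinate of $\nu_m$ must be $\geq 1/(n+1)$ and all coordinates of $\mu_m$ are $\leq 1$). Either way $\mathcal{H}(\mu_m, \nu_m) \to \infty$.

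For (ii), the plan is to use the explicit mollification
\[
\mu_m = \bigl(1 - \tfrac{1}{m}\bigr)\mu + \tfrac{1}{m(n+1)}\mathbb{1}, \qquad \nu_m = \bigl(1 - \tfrac{1}{m}\bigr)\nu + \tfrac{1}{m(n+1)}\mathbb{1},
\]
where $\mathbb{1}=(1,\dots,1)^\top$, so that $(\mu_m, \nu_m) \in \mathring{\mathcal{S}}^n \times \mathring{\mathcal{S}}^n$ and $(\mu_m, \nu_m) \to (\mu, \nu)$. If $\mu \sim \nu$ with common support $S$, then for $i \in S$ the ratios $\mu_m^i / \nu_m^i \to \mu^i/\nu^i$, and for $i \notin S$ they are identically $1$. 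Since the Hilbert distance of $\mu, \nu$ restricted to $S$ is achieved at the extreme ratios, which straddle $1$ (as $\sum_{i\in S}(\mu^i-\nu^i)=0$), the value $1$ contributed by indices outside $S$ does not alter the max/min in the limit, so $\mathcal{H}(\mu_m, \nu_m) \to \mathcal{H}(\mu, \nu)$. If $\mu \nsim \nu$, pick $i$ with (say) $\mu^i > 0$, $\nu^i = 0$; then $\mu_m^i/\nu_m^i = (n+1)(m-1)\mu^i + 1 \to \infty$, while some other ratio stays bounded, so $\mathcal{H}(\mu_m, \nu_m) \to \infty = \mathcal{H}(\mu, \nu)$.

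The only delicate step is the combinatorial check in part (ii) that the ``artificial'' ratios equal to $1$ introduced by the mollification do not distort the limiting max/min. I expect this is the main obstacle, and it is handled by the observation that $1$ already lies in the closed interval $[\min_{i \in S} \mu^i/\nu^i,\, \max_{i \in S} \mu^i/\nu^i]$ when $\mu, \nu$ have common support $S$.
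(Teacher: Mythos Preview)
Your proof is correct. The overall structure matches the paper's---part (iii) is continuity on the interior, part (i) splits on $\mu\sim\nu$ versus $\mu\nsim\nu$, and part (ii) produces a good approximating sequence---but your execution differs in two respects worth noting.

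For (ii), the paper argues abstractly: setting $J=\{j:\mu^j=\nu^j=0\}$, it observes that any sequence $(\mu_m,\nu_m)\in\mathring{\mathcal{S}}^n\times\mathring{\mathcal{S}}^n$ for which the ratios $\mu_m^j/\nu_m^j$ on $J$ stay within the limiting range $[\min_{i\in S}\mu^i/\nu^i,\max_{i\in S}\mu^i/\nu^i]$ will do, and asserts such a sequence can be chosen. You instead give an explicit construction via convex combination with the uniform distribution, and then verify the key point that the artificial ratio $1$ lies in the required range because $\sum_{i\in S}(\mu^i-\nu^i)=0$. This is more concrete and self-contained. For (i), the paper extracts a convergent subsequence violating the range condition by a fixed $\epsilon>0$ and bounds the limit from below; your ``restricting to the common support $S$ can only shrink the range of ratios'' argument reaches the same conclusion more directly, and also transparently handles sequences that are not in the interior (the paper's proof, as written, starts from sequences in $\mathring{\mathcal{S}}^n$). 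Both routes are short; yours is slightly more explicit, the paper's slightly more structural.
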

	\begin{proof}
		If $(\mu,\nu)\in \mathring{\mathcal{S}}^n \times \mathring{\mathcal{S}}^n$, the result is immediate from the definition of the Hilbert metric and continuity of division and logarithms, establishing \textit{(iii)}. 
		Consider now a pair of sequences  $\{\mu_m\}, \{\nu_m\} \in \mathring{\mathcal{S}}^n$ convergent in the Euclidean metric, with respective limits $\mu, \nu \in \mathcal{S}^n$. Suppose first that $\mu\nsim \nu$, then it is easy to verify that either $\max_i\{\mu_m^i/\nu_m^i\}\to \infty$ or $\min_i\{\mu_m^i/\nu_m^i\}\to 0$, hence $\mathcal{H}(\mu_m, \nu_m)\to \infty = \mathcal{H}(\mu,\nu)$. 

		Suppose instead that $\mu\sim \nu$, let $J=\{j:\mu^j=\nu^j=0\}.$ If
		\[\limsup_{m \to \infty} \max_{j\in J} \{\mu^j_m/\nu^j_m\}\leq \max_{i : \nu^i > 0} \{\mu^i/\nu^i\}
		\quad \text{and} \quad
		\liminf_{m \to \infty} \min_{j\in J} \{\mu^j_m/\nu^j_m\}\geq \min_{i : \nu^i > 0} \{\mu^i/\nu^i\},\]
		then a direct calculation shows $\mathcal{H}(\mu_m,\nu_m)\to \mathcal{H}(\mu,\nu)$. Since we can always choose $\{\mu_m \}$ and $\{\nu_m \}$ satisfying the two above inequalities, this proves \textit{(ii)}. 
  
		If a given sequence $(\mu_m, \nu_m)$ does not satisfy the inequalities above, then take any subsequence, still indexed by $m$, such that $\mathcal{H}(\mu_m, \nu_m)$ converges in $[0,\infty]$, and such that at least one of the above inequalities is violated for every term in the subsequence; in particular, suppose that for some $\epsilon>0$, for all $m$,
		\[\max_{j\in J} \{\mu^j_m/\nu^j_m\}> \max_{i:\nu^i>0} \{\mu^i/\nu^i\}+\epsilon.\]
		Then 
		\[\lim_{m \to \infty}\mathcal{H}(\mu_m,\nu_m)\geq \log\bigg(\frac{\max_{i:\nu^i>0}\{\mu^i/\nu^i\}+\epsilon}{\min_{i:\nu^i>0}\{\mu^i/\nu^i\}}\bigg)>\mathcal{H}(\mu,\nu).\]
		A similar argument holds for any subsequence with $\min_{j\in J}\{\mu^j_m/\nu^j_m\}<\min_{i:\nu^i >0}\{\mu^i/\nu^i\}-\epsilon$.
		Therefore, we conclude 		$\liminf_{m \to \infty} \mathcal{H}(\mu_m, \nu_m) \geq \mathcal{H}(\mu,\nu)$, which is \textit{(i)}.
	\end{proof}

	\subsection{Main results}\label{sec:main_results}
	
	Throughout the paper, we make the following assumptions on the nonlinear filtering system described in Section~\ref{sec:filtering_setup}.
	\begin{assumption}
		$X$ is a time-homogeneous continuous-time Markov chain on $n+1$ states.
	\end{assumption}
	\begin{assumption}
		$h = (h_i)_{i=1}^d$ is bounded for all $i$.
	\end{assumption}
	\begin{assumption}
		$\sigma = 1$ and $d=1$ in \eqref{eq:observation} and \eqref{eq:wonhamNorm_d_dim}.
	\end{assumption}
	
	The final assumption only serves the purpose of simplifying notation -- all our results are easily extendable to the case of multi-dimensional $Y$ and invertible $\sigma \in \R^{d \times d}$. Similarly, we could easily allow for time-dependence in $\sigma$ and $h$, as long as the first is bounded away from zero, and the second stays bounded for all $t$, and for time inhomogeneity in the Markov chain dynamics of $X$.
	\begin{notation}
		Given that we take the observations $Y$ to be one-dimensional, the sensor function $h : \, \mathbb{S} \rightarrow \R $ can be seen as a vector $h \in \R^{n+1}$ with entries $h^i = h(a_i)$ for $i = 0, \dots, n$. From now on we will employ this notation. We also denote by $H = \operatorname{diag}(h)$ the diagonal matrix with entries $(H)_{ii} = h^i$. In general, we will always denote the components of vectors (or vector-valued processes) with superscripts. We denote by $\mathbf{N}$ the set of natural numbers $\{0, \dots, n \}$. Sometimes we will write $\di A_t \le \di \tilde A_t$ for two Lebesgue--Stieltjes measures $A_t$ and $\tilde A_t$ on $[0, \infty)$, by which we mean $\int_s^t \di A_r \le \int_s^t \di \tilde A_r$ for all $0 \le s < t < \infty$.
	\end{notation}
	
	For reference, we rewrite here equation \eqref{eq:wonhamNorm_d_dim} for the Wonham filter given the above assumptions and notation
	\begin{equation}\label{eq:wonhamNorm}
	\di \pi_{t} = Q^{\top} \pi_{t} \dt + \left( H - \pi_{t}^{\top}h \, \mathbb{I}_{n+1} \right) \pi_{t} \left( \di Y_{t} - \pi_{t}^{\top}h \dt \right), \quad \pi_0 = \mu.
	\end{equation}
	
	We will first consider the long time behaviour of the error between $\pi_t$ and $\tilde\pi_t$, where $\tilde\pi_t$ is the filter initialized with the `wrong' initial data $\tilde\pi_0 = \nu \neq \mu$ but the same dynamics as $\pi$. The evolution equation for $\tilde\pi_t$ is given by
	\begin{equation}\label{eq:wonhamWrong}
	\di \tilde\pi_{t} = Q^{\top} \tilde\pi_{t} \dt + \left( H - \tilde\pi_{t}^{\top}h \, \mathbb{I}_{n+1} \right) \tilde\pi_{t} \left( \di Y_{t} - \tilde\pi_{t}^{\top}h \dt \right), \quad \tilde\pi_0 = \nu.
	\end{equation}
	
	Our key result is the following pathwise estimate on the stability of the filter.
	
	\begin{thm}[Contraction rate of $\mathcal{H}(\pi_t, \tilde \pi_t)$]\label{thm:contraction}
		Let $\pi_t$ be the solution to \eqref{eq:wonhamNorm} and $\tilde \pi_t$ the solution to \eqref{eq:wonhamWrong}.
		Suppose $q_{ij} > 0$ for all $ i \neq j$. Then for all $t < \infty$,
		\begin{equation*}
		\tanh \bigg( \frac{\hilbert ( \pi_t, \tilde \pi_t )}{4} \bigg) \le \tanh \bigg( \frac{\hilbert (\mu, \nu )}{4} \bigg) e^{- \lambda t},
		\end{equation*}
		where $\lambda = 2 \min_{i \neq j} \sqrt{q_{ij} q_{ji}}$. In particular,
		\begin{equation*}
		\hilbert ( \pi_t, \tilde \pi_t ) \le \hilbert (\mu, \nu ) e^{- \lambda t}.
		\end{equation*}
	\end{thm}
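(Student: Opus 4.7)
The plan is to work in the logarithmic coordinates $\psi^i_t := \log(\pi^i_t/\tilde\pi^i_t)$ for $i \in \mathbf{N}$, where the Hilbert distance takes the simple form $\mathcal{H}(\pi_t,\tilde\pi_t)=\max_i\psi^i_t-\min_j\psi^j_t$. By Lemma~\ref{lemma:hilbertsemicts} and an approximation of the initial data, we may first restrict to $\mu,\nu\in\mathring{\mathcal{S}}^n$; the assumption $q_{ij}>0$ for all $i\neq j$ and standard arguments for the Wonham SDE then ensure $\pi_t,\tilde\pi_t\in \mathring{\mathcal{S}}^n$ for all $t\ge 0$, so all $\psi^i_t$ are finite semimartingales. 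Boundary initial data can be recovered afterwards by approximating $\mu,\nu$ by interior sequences, using continuous dependence of the SDE on its initial datum and part (i)/(iii) of Lemma~\ref{lemma:hilbertsemicts} (with the trivial case $\mathcal{H}(\mu,\nu)=\infty$ handled separately).

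The first main step is to compute $\di\psi^i_t$ via It\^o's formula applied to $\log\pi^i_t$ and $\log\tilde\pi^i_t$ using \eqref{eq:wonhamNorm} and \eqref{eq:wonhamWrong}. The stochastic integrands in $\di\log\pi^i_t$ and $\di\log\tilde\pi^i_t$ both have the form $(h^i-\pi_t^\top h)\di Y_t$ and $(h^i-\tilde\pi_t^\top h)\di Y_t$, so in $\di\psi^i_t$ the coefficient of $\di Y_t$ becomes $\tilde\pi_t^\top h-\pi_t^\top h$, \emph{independent of $i$}. An analogous expansion of the It\^o correction terms $-\tfrac{1}{2}(h^i-\pi^\top h)^2$ and of the compensating drift $-(h^i-\pi^\top h)\pi^\top h\,\dt$ shows that, after cancellation, all $h^i$-dependent parts collapse, leaving an $i$-independent drift contribution $\tfrac{1}{2}[(\pi_t^\top h)^2-(\tilde\pi_t^\top h)^2]$. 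Consequently, $\psi^i_t-\psi^j_t$ is absolutely continuous and, using $(Q^\top\pi)^i/\pi^i=q_{ii}+\sum_{k\neq i}q_{ki}\pi^k/\pi^i$, its Radon--Nikodym derivative equals
\[
\frac{d}{dt}(\psi^i_t-\psi^j_t)=\sum_{k\neq i}q_{ki}\Bigl(\tfrac{\pi^k_t}{\pi^i_t}-\tfrac{\tilde\pi^k_t}{\tilde\pi^i_t}\Bigr)-\sum_{k\neq j}q_{kj}\Bigl(\tfrac{\pi^k_t}{\pi^j_t}-\tfrac{\tilde\pi^k_t}{\tilde\pi^j_t}\Bigr).
\]

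The second step is to derive a scalar differential inequality for $D_t:=\mathcal{H}(\pi_t,\tilde\pi_t)$. Since $D_t=\max_i(\psi^i_t-\psi^0_t)-\min_j(\psi^j_t-\psi^0_t)$ is a maximum of finitely many absolutely continuous processes, it is itself absolutely continuous, and a standard envelope (Danskin-type) argument gives, for a.e.~$t$,
\[
\dot D_t \le \Bigl[\tfrac{(Q^\top\pi_t)^{i^*}}{\pi^{i^*}_t}-\tfrac{(Q^\top\tilde\pi_t)^{i^*}}{\tilde\pi^{i^*}_t}\Bigr]-\Bigl[\tfrac{(Q^\top\pi_t)^{i_*}}{\pi^{i_*}_t}-\tfrac{(Q^\top\tilde\pi_t)^{i_*}}{\tilde\pi^{i_*}_t}\Bigr],
\]
where $i^*(t),i_*(t)$ are selected from the active sets at the max/min. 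The defining inequalities $\psi^{i^*}_t\ge\psi^k_t\ge\psi^{i_*}_t$ for all $k$ make every summand in the first bracket non-positive and every summand in the second non-negative; retaining only the crossing terms $k=i_*$ in the first sum and $k=i^*$ in the second, and writing ratios in terms of $D_t$ via $\pi^{i_*}/\pi^{i^*}=(\tilde\pi^{i_*}/\tilde\pi^{i^*})e^{-D_t}$ and $\pi^{i^*}/\pi^{i_*}=(\tilde\pi^{i^*}/\tilde\pi^{i_*})e^{D_t}$, one obtains
\[
\dot D_t \le -\alpha_t(1-e^{-D_t})-\beta_t(e^{D_t}-1),\qquad \alpha_t\beta_t=q_{i_*,i^*}q_{i^*,i_*}.
\]
Factoring $1-e^{-D_t}=2e^{-D_t/2}\sinh(D_t/2)$ and $e^{D_t}-1=2e^{D_t/2}\sinh(D_t/2)$, and applying AM--GM to $\alpha_t e^{-D_t/2}+\beta_t e^{D_t/2}\ge 2\sqrt{\alpha_t\beta_t}\ge 2\min_{i\neq j}\sqrt{q_{ij}q_{ji}}=\lambda$, yields $\dot D_t\le -2\lambda\sinh(D_t/2)$. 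This is the key quantitative inequality; the AM--GM step is what I expect to be the main technical insight to pin down.

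Finally, using $\sinh(D/2)=2\sinh(D/4)\cosh(D/4)$ and $\tfrac{d}{dt}\tanh(D_t/4)=\tfrac{1}{4}\mathrm{sech}^2(D_t/4)\dot D_t$, the inequality becomes $\tfrac{d}{dt}\tanh(D_t/4)\le -\lambda\tanh(D_t/4)$, and Gr\"onwall delivers the first claimed bound. The second bound follows from the first: concavity of $\tanh$ on $[0,\infty)$ with $\tanh(0)=0$ gives $\tanh(\alpha x)\ge \alpha\tanh(x)$ for $\alpha\in[0,1]$, so $\tanh(D_0 e^{-\lambda t}/4)\ge e^{-\lambda t}\tanh(D_0/4)\ge\tanh(D_t/4)$, and monotonicity of $\tanh$ yields $D_t\le D_0 e^{-\lambda t}$. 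The hard part is steady bookkeeping in the It\^o computation of step one (making the $i$-dependent drift terms cancel cleanly) and the correct envelope argument of step two; beyond that, step three is a one-line AM--GM and Gr\"onwall.
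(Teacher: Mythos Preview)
Your proposal is correct and follows essentially the same route as the paper. The paper parametrizes via $\theta_k^i=\log(\pi^i/\pi^k)$ and works with $\Delta_{ik}=\theta_k^i-\tilde\theta_k^i$, while you use $\psi^i=\log(\pi^i/\tilde\pi^i)$; since $\psi^i-\psi^j=\Delta_{ij}$, these are the same objects, and the paper's measurable-selection/Dupuis--Ellis argument for the dynamics of $\Delta_\infty$ is precisely your Danskin-type envelope step. The key estimate---drop all but the crossing terms, apply AM--GM to obtain $\dot D_t\le -2\lambda\sinh(D_t/2)$, then convert to a Gr\"onwall inequality for $\tanh(D_t/4)$---and the boundary-data approximation via Lemma~\ref{lemma:hilbertsemicts} are identical in both.
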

	
	Unsurprisingly, our contraction rate is the same as the asymptotic rate in \cite{atar-zeitouni-lyapunov97}, and the non-asymptotic rate in \cite[Theorem~4.3]{bax-chiga-lip04}, and shares the issue of only being (strictly) positive if all the off-diagonal entries of $Q$ are (strictly) positive. This is a very strong mixing assumption on $X$;~however, it seems necessary to be able to compute an explicit contraction rate, and in fact a similar assumption is made in \cite{legland04} in the discrete-time setting (see \cite[Definition~3.2]{legland04}).
	
	Given these stability estimates, we are able to proceed to the next challenge of understanding the error of approximate filters. Consider a general approximate filter of the form
	\begin{equation}\label{eq:wonhamApprox}
	\di \tilde \pi_t  = \tilde f_t \dt + \tilde g_t \di Y_t, \quad \tilde \pi_0 = \nu,
	\end{equation}
	where $\tilde f_t, \tilde g_t$ are $ \R^{n+1}$-valued $\{\mathcal{Y}_t\}$-predictable process and $\tilde \pi_t \in \mathring{\mathcal{S}}^n$ for all $t$, and we refer to Section~\ref{sec:approximate_filter} for the necessary assumptions on $\tilde f_t$, $\tilde g_t$ and $\tilde \pi_t$.
	\begin{thm}[Bounds for the expected Hilbert error] \label{thm:expected_hilbert_bounds}
		Let $\pi_t$ be the solution to \eqref{eq:wonhamNorm} and $\tilde \pi_t$ the solution to \eqref{eq:wonhamApprox}. Suppose $\mu, \nu \in \mathring{\mathcal{S}}^n$ and $q_{ij} > 0$ for all $i \neq j$. Assuming sufficient integrability in \eqref{eq:wonhamApprox} (see Assumption~\assumptionref{assmp:approx_filter}), for all $t < \infty$, we have that
		\begin{align*}
			\expect{\hilbert (\pi_t, \tilde \pi_t)}
			&\le \hilbert (\mu, \nu) e^{-\lambda t}
			+ \int_0^t e^{-\lambda(t-s)} \expect{\max_{i,k}  \Big\{ \mathcal{E}_s^{1,i}  - \mathcal{E}_s^{1,k}  - \frac{1}{2} \big(\mathcal{E}_s^{2,i} - \mathcal{E}_s^{2,k}  \big) \Big\} } \ds \nonumber \\
			&\quad + \max_j |h^j|  \int_0^t e^{-\lambda(t-s)} \expect{ \max_{i,k} \big\{ \mathcal{E}_s^{3,i}  - \mathcal{E}_s^{3,k}  \big\} } \ds \nonumber \\
			&\quad + \frac{1}{4}\sum_{(i,k)} \sum_{(j,l) \neq (i,k)} \expect{\int_0^t e^{-\lambda(t-s)} \di L^{0}_s(\Delta_{ik}(\cdot) - \Delta_{jl}(\cdot))},
		\end{align*}
		where $\lambda = 2\min_{i \neq k}\sqrt{q_{ik}q_{ki}}$ is the deterministic contraction rate from Theorem~\ref{thm:contraction}. For $j \in \mathbf{N}$ the error terms are given by
		\begin{equation}\label{eq:error_terms_approx}
			\mathcal{E}_t^{1, j}  = \Bigg( \sum_{m=0}^{n} q_{mj} \frac{\tilde \pi_t^m}{\tilde \pi_t^j} \Bigg) - \frac{\tilde f_t^j}{\tilde \pi_t^j}, \quad
			\mathcal{E}_t^{2, j}  = (h^j)^2 - \frac{(\tilde g^j_t)^2}{(\tilde \pi_t^j)^2}, \quad
			\mathcal{E}_t^{3, j}  = h^j - \frac{\tilde g^j_t}{\tilde \pi_t^j}, \quad
		\end{equation}
		and the processes $(\Delta_{ik}(t))_{t \ge 0}$ for $(i,k) \in \mathbf{N} \times \mathbf{N}$ are defined as $\Delta_{ik}(t) = \log \frac{\pi_t^i}{\pi_t^k} - \log \frac{\tilde \pi_t^i}{\tilde \pi_t^k}$. The set $\mathcal{I}_t = \{ (i,k) \, : \, \Delta_{ik}(t) = \hilbert (\pi_t, \tilde \pi_t)\}$ is the argmax of these processes for all $ t < \infty$, and $L_t^0(\Delta_{ik}(\cdot) - \Delta_{jl}(\cdot))$ denotes the local time at $0$ of the difference process $(\Delta_{ik} - \Delta_{jl})$ for all $(i,k)$, $(j,l) \in \mathbf{N} \times \mathbf{N}$.
	\end{thm}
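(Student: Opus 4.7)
The plan is to apply Itô's formula to each log-ratio $\Delta_{ik}(t)$, assemble them into $\mathcal{H}(\pi_t, \tilde\pi_t) = \max_{(i,k)}\Delta_{ik}(t)$ via an Itô--Tanaka formula for the maximum of semimartingales, and close with an integrating-factor argument of kernel $e^{-\lambda t}$ after taking expectations.

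First, I would apply Itô's formula to $\log \pi_t^i$ using \eqref{eq:wonhamNorm}: a convenient cancellation between the $-(h^i - \pi_t^\top h)\pi_t^\top h\, dt$ drift term and the $-\tfrac{1}{2}(h^i - \pi_t^\top h)^2\, dt$ Itô correction collapses the drift to
\[
d\log\pi_t^i = \Big[\tfrac{(Q^\top\pi_t)^i}{\pi_t^i} + \tfrac{1}{2}(\pi_t^\top h)^2 - \tfrac{1}{2}(h^i)^2\Big]dt + (h^i - \pi_t^\top h)\,dY_t,
\]
while the corresponding Itô expansion for $\tilde\pi_t$ via \eqref{eq:wonhamApprox} gives $d\log\tilde\pi_t^i = [\tfrac{\tilde f_t^i}{\tilde\pi_t^i} - \tfrac{1}{2}(\tilde g_t^i/\tilde\pi_t^i)^2]dt + (\tilde g_t^i/\tilde\pi_t^i)\,dY_t$. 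Subtracting the $k$-indexed version eliminates the index-independent term $\tfrac{1}{2}(\pi_t^\top h)^2$, and adding and subtracting the quantity $(Q^\top\tilde\pi_t)^i/\tilde\pi_t^i$ exposes the error $\mathcal{E}^{1,i}$, yielding
\[
d\Delta_{ik}(t) = A_{ik}(t)\,dt + \big[(\mathcal{E}_t^{1,i}-\mathcal{E}_t^{1,k}) - \tfrac{1}{2}(\mathcal{E}_t^{2,i}-\mathcal{E}_t^{2,k})\big]dt + (\mathcal{E}_t^{3,i}-\mathcal{E}_t^{3,k})\,dY_t,
\]
where $A_{ik}(t) = \sum_m q_{mi}(\pi_t^m/\pi_t^i - \tilde\pi_t^m/\tilde\pi_t^i) - \sum_m q_{mk}(\pi_t^m/\pi_t^k - \tilde\pi_t^m/\tilde\pi_t^k)$ is precisely the drift that would arise if $\tilde\pi$ also solved the optimal filter equation.

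Next, I would pass to the Hilbert distance via the Itô--Tanaka formula for the max of $(n+1)^2$ continuous semimartingales, obtaining an expansion of the form $d\mathcal{H}(\pi_t,\tilde\pi_t) = \sum_{(i,k)\in\mathcal{I}_t}\alpha_{ik}(t)\,d\Delta_{ik}(t) + \tfrac{1}{4}\sum_{(i,k)}\sum_{(j,l)\neq(i,k)} dL^0_t(\Delta_{ik}(\cdot)-\Delta_{jl}(\cdot))$, with nonnegative weights $\alpha_{ik}(t)$ summing to one on $\mathcal{I}_t$; the prefactor $\tfrac{1}{4}$ arises from the $\tfrac{1}{2}$ of Tanaka's formula combined with the symmetry $L^0(Z) = L^0(-Z)$, which double-counts each unordered pair in the double sum. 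The decisive input imported from the proof of Theorem~\ref{thm:contraction} is the pathwise inequality $A_{ik}(t) \le -\lambda\,\mathcal{H}(\pi_t,\tilde\pi_t)$ for $(i,k)\in\mathcal{I}_t$, which captures the contraction of the optimal-filter drift at argmax pairs. Writing $dY_t = h(X_t)\,dt + dW_t$ from \eqref{eq:observation}, the $dW$ part becomes a true martingale under Assumption~\assumptionref{assmp:approx_filter} (vanishing in expectation), while the $h(X_t)\,dt$ contribution is absolutely bounded by $\max_j|h^j|\cdot|\mathcal{E}^{3,i}-\mathcal{E}^{3,k}|$; bounding each $\alpha$-weighted increment by its maximum over $(i,k)$, taking expectations and multiplying by $e^{\lambda t}$ before integrating produces the stated convolution bound.

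The main obstacle is the careful Itô--Tanaka calculus for the max of $(n+1)^2$ semimartingales, in particular producing the correct pairwise local time structure with prefactor $\tfrac{1}{4}$ and extending the pathwise contraction of Theorem~\ref{thm:contraction} to the convex combination over the (possibly non-unique) argmax set $\mathcal{I}_t$. Secondary concerns are ensuring $\pi_t$ and $\tilde\pi_t$ remain in $\mathring{\mathcal S}^n$ so that the log-ratios and reciprocals are well-defined (which follows from $\mu,\nu\in\mathring{\mathcal S}^n$ together with the strict positivity of the off-diagonal entries of $Q$), and checking the true-martingale property of the Brownian integrand so that its expectation vanishes, which is exactly the content of Assumption~\assumptionref{assmp:approx_filter}.
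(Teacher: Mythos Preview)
Your proposal is correct in its overall architecture and would yield the theorem; the Itô computation for $d\Delta_{ik}$, the identification of the contraction drift $A_{ik}$ at argmax pairs, and the integrating-factor closure all match the paper. Two technical choices differ, however, and are worth noting.

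First, the paper does not invoke an off-the-shelf Itô--Tanaka formula for the maximum of $(n+1)^2$ semimartingales. Instead it works with the smooth LogSumExp approximation $LSE_\alpha(\boldsymbol{\Delta})(t) = \alpha^{-1}\log\sum_{(i,k)}e^{\alpha\Delta_{ik}(t)}$, applies Itô's formula for fixed $\alpha$, and then sends $\alpha\to\infty$. The drift and martingale terms converge by dominated convergence (with SoftArgMax weights $1/|\mathcal{I}_r|$ on the argmax set, rather than the abstract convex weights $\alpha_{ik}$ you posit), while the cross-variation term is handled by a separate lemma (in the appendix) bounding it above by $\tfrac12\sum_{(i,k)}\sum_{(j,l)\neq(i,k)}dL^0(\Delta_{ik}-\Delta_{jl})$ via the occupation-time formula and weak convergence of $\alpha e^{\alpha x}/(1+e^{\alpha x})^2$ to a Dirac mass at $0$. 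This yields the $\tfrac14$ prefactor as $\tfrac12\times\tfrac12$ (Itô second-order factor times ordered-versus-unordered-pair accounting), which matches your heuristic. Your direct approach is cleaner in principle, but the paper's LSE route is self-contained and sidesteps the need to cite or derive a multi-process Tanaka formula with the precise local-time structure.

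Second, the paper writes the stochastic integral against the innovation process $B_t = Y_t - \int_0^t \pi_s^\top h\,ds$, a $\{\mathcal{Y}_t\}$-Brownian motion, rather than decomposing $dY_t = h(X_t)\,dt + dW_t$ as you do. Both lead to a martingale with zero expectation under Assumption~\assumptionref{assmp:approx_filter}, and both leave a residual bounded-variation term controlled by $\max_j|h^j|\cdot\max_{i,k}|\mathcal{E}^{3,i}-\mathcal{E}^{3,k}|$ (using $|\pi_t^\top h|\le\max_j|h^j|$ in the paper's version, $|h(X_t)|\le\max_j|h^j|$ in yours). The innovation route keeps everything $\{\mathcal{Y}_t\}$-adapted, which is natural for the filtering setup, but your decomposition works equally well here.
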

Assuming there is no error in the stochastic terms when comparing \eqref{eq:wonhamApprox} and \eqref{eq:wonhamNorm}, a stronger result is possible.
	\begin{thm}[Pathwise decay rate for the Hilbert error]\label{thm:pathwise_decay_approx_error}
		Under the same assumptions as in Theorem~\ref{thm:expected_hilbert_bounds}, suppose that the error terms $\mathcal{E}_t^{3,i}$ defined in \eqref{eq:error_terms_approx} vanish for all $i \in \mathbf{N}$ and all $t \ge 0$, and $\tilde \pi_t$ is observable. Let $u_t \in (0,1)$ be the unique solution to the ODE with random coefficients given by
	\begin{equation}\label{eq:thm_approx_pathwise_ode}
		\frac{\di u_t}{\dt} = - \tilde \lambda^{\star} (t, u_t) u_t + \max_{i,k}  \big\{ \mathcal{E}_s^{1,i}  - \mathcal{E}_s^{1,k} \big\} (1-u_t^2),
		\qquad u_0 = \tanh \bigg( \frac{\hilbert( \mu, \nu)}{4}   \bigg),
	\end{equation}
	where
	\begin{equation*}
	\tilde \lambda^{\star}(t, u_t)
	= \min_{i \neq k} \bigg\{\bigg( q_{ik} \frac{\tilde \pi_t^i}{\tilde \pi_t^k}
	+ \hspace{-5pt} \sum_{\substack{ j \neq i,k, \\ j \notin \mathcal{\tilde J}^i_k(t, u_t)}} \hspace{-5pt} q_{j k} \frac{\tilde \pi_t^j}{\tilde \pi_t^k}\bigg) \frac{1 + u_t}{1-u_t}
	+ \bigg( q_{ki} \frac{\tilde \pi_t^k}{\tilde \pi_t^i}
	+ \hspace{-5pt} \sum_{\substack{ j \neq i,k, \\ j \in \mathcal{\tilde J}^i_k(t, u_t)}} \hspace{-5pt} q_{j i} \frac{\tilde \pi_t^j}{\tilde \pi_t^i}\bigg) \frac{1 - u_t}{1+u_t} \bigg\},
	\end{equation*}
	and $
	\mathcal{\tilde J}^i_k(t, u_t) := \Big\{ j \in \mathbf{N} \, : \, \frac{q_{j k}}{\tilde \pi_t^k} \ge \frac{q_{j i}}{\tilde \pi_t^i} \Big( \frac{1-u_t}{1+u_t} \Big)^2 \Big\}$.
	Then for all $t < \infty$,
	\begin{equation*}
	\tanh \bigg( \frac{\hilbert (\pi_t, \tilde \pi_t)}{4} \bigg) \le u_t.
	\end{equation*}
	
	In particular, $\tilde 
	\lambda^{\star}(t, u_t) \ge \tilde \lambda^{\star}_t$, where
	\begin{align}
		\tilde \lambda^{\star}_t 
		&:= 2\min_{i \neq k}
		\left\{ \min_{S \subseteq \mathbf{N}} \sqrt{  q_{ik}q_{k i}
		+  \hspace{-7pt} \sum_{j \in S,\, j \neq i,k}  \hspace{-7pt} q_{ik} q_{j i} \frac{\tilde \pi_t^j}{\tilde \pi_t^k}
		+ \hspace{-7pt} \sum_{l \notin S, \, l \neq i,k} \hspace{-7pt} q_{ki} q_{l k} \frac{\tilde \pi_t^l}{\tilde \pi_t^i}
		+  \hspace{-3pt}   \sum_{\subalign{\: j &\in S, \\ j &\neq i,k}} \sum_{\subalign{\: l &\notin S, \\ l &\neq i,k}}  \hspace{-3pt} q_{ji} q_{l k} \frac{\tilde \pi_t^j \tilde \pi_t^l}{\tilde \pi_t^i \tilde \pi_t^k} } \right\}, \nonumber \\
		&\ge 2\min_{i \neq k} \left\{ \sqrt{ q_{i k} q_{k i}
		+ \sum_{j \neq i, k} \min \bigg\{ \frac{ q_{j i} q_{i k}}{\tilde \pi_t^k}, \frac{q_{j k} q_{k i}}{\tilde \pi_t^i} \bigg\}  \tilde \pi_t^j } \right\} =: \tilde \lambda_t, \label{eq:thm_approx_pathwise_rate}
	\end{align}
	which gives that for all $t < \infty$, we have the two bounds
	\begin{equation}
		\tanh \bigg( \frac{\hilbert (\pi_t, \tilde \pi_t)}{4} \bigg) \le \tanh \bigg( \frac{\hilbert (\mu, \nu)}{4} \bigg) e^{- \int_0^t \tilde \lambda^{\star}_s \ds}
		+ \frac{1}{4}\int_0^t e^{- \int_s^t \tilde \lambda^{\star}_r \di r} \max_{i,k}  \big\{ \mathcal{E}_s^{1,i}  - \mathcal{E}_s^{1,k} \big\} \ds, \label{eq:tanh_pathwise_approx_bound}
	\end{equation}
	and
	\begin{equation}
		\hilbert (\pi_t, \tilde \pi_t) \le \hilbert (\mu, \nu) e^{- \int_0^t \tilde \lambda^{\star}_s \ds}
		+ \int_0^t e^{- \int_s^t \tilde \lambda^{\star}_r \di r} \max_{i,k}  \big\{ \mathcal{E}_s^{1,i}  - \mathcal{E}_s^{1,k} \big\} \ds. \label{eq:hilbert_pathwise_approx_bound}
	\end{equation}
	\end{thm}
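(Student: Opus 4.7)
The key structural observation is that the assumption $\mathcal{E}_t^{3,j}\equiv 0$ forces $\tilde g_t^j = h^j\tilde\pi_t^j$, so by It\^o's formula the $\di Y$-coefficient of $\log\tilde\pi_t^j$ equals $h^j$, while that of $\log\pi_t^j$ is $h^j - \pi_t^\top h$; the common $\pi_t^\top h$ bias cancels on taking differences over any pair $(i,k)$. Consequently each $\Delta_{ik}(t) = \log(\pi_t^i/\pi_t^k) - \log(\tilde\pi_t^i/\tilde\pi_t^k)$ is an absolutely continuous finite-variation process, and hence so is $\hilbert(\pi_t,\tilde\pi_t) = \max_{(i,k)}\Delta_{ik}(t)$. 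This eliminates the local-time contributions appearing in Theorem~\ref{thm:expected_hilbert_bounds} and enables strictly pathwise bounds. One first computes $\dot\Delta_{ik}(t)$ via It\^o's formula (using observability of $\tilde\pi_t$ to stay inside the simplex), verifying that the It\^o corrections depending only on a single index $j$ cancel between $i$ and $k$ and leave just the mixing terms $\sum_m q_{mj}(\pi_t^m/\pi_t^j - \tilde\pi_t^m/\tilde\pi_t^j)$ together with $\mathcal{E}_t^{1,i} - \mathcal{E}_t^{1,k}$. Since $\hilbert$ is a maximum of finitely many absolutely continuous functions, a standard argument yields $\dot{\hilbert}(\pi_t,\tilde\pi_t) = \dot\Delta_{ik}(t)$ at a.e.~$t$ for any $(i,k)\in\mathcal{I}_t$.

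The next step is the change of variables $u_t := \tanh(\hilbert(\pi_t,\tilde\pi_t)/4)$, for which the half-angle identity gives $e^{\hilbert/2} = (1+u_t)/(1-u_t)$. At an argmax $(i,k)$, Birkhoff's characterization gives $r^i := \pi_t^i/\tilde\pi_t^i = \max_j r^j$, $r^k = \min_j r^j$, and $r^i/r^k = e^{\hilbert} = ((1+u_t)/(1-u_t))^2$. Writing $\pi_t^m/\pi_t^i = (r^m/r^i)\,\tilde\pi_t^m/\tilde\pi_t^i$, the ratios $r^m/r^i$ lie in $[(1-u_t)^2/(1+u_t)^2,1]$. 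Because the resulting contribution to $\dot\Delta_{ik}$ is \emph{linear} in $r^m/r^i$, its supremum over this interval is attained at one of the two endpoints, and the endpoint is selected exactly by the criterion defining $\tilde{\mathcal{J}}^i_k(t,u_t)$. Combining with the chain rule $\dot u_t = \tfrac14(1-u_t^2)\dot{\hilbert}$ and the identities $(1-u_t^2)/(1\pm u_t)^2 = (1\mp u_t)/(1\pm u_t)$, the mixing terms collapse into precisely the two weighted groups in $\tilde\lambda^\star(t,u_t)$, yielding the differential inequality of \eqref{eq:thm_approx_pathwise_ode}; a standard ODE comparison theorem (the right-hand side is locally Lipschitz in $u$ on $[0,1)$) then gives $u_t$ bounded by the solution of \eqref{eq:thm_approx_pathwise_ode}.

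For the remaining lower bounds, the AM-GM inequality $a\tfrac{1+u}{1-u} + b\tfrac{1-u}{1+u}\ge 2\sqrt{ab}$ applied to the two weighted sums in $\tilde\lambda^\star(t,u_t)$ removes the $u_t$-dependence and produces the square-root expression with optimization over $S\subseteq\mathbf{N}$, proving $\tilde\lambda^\star(t,u_t)\ge\tilde\lambda^\star_t$; the step $\tilde\lambda^\star_t\ge\tilde\lambda_t$ comes from choosing $S$ termwise under the square root to obtain the minimum-of-ratios form via $\sqrt{ab}\ge\min(a,b)$. Finally, \eqref{eq:tanh_pathwise_approx_bound} follows by Duhamel from the linearized inequality (bounding $1-u_t^2\le 1$ in the error term), and \eqref{eq:hilbert_pathwise_approx_bound} similarly by running the analysis directly on $\hilbert$ rather than on $u_t$, in which case the chain-rule factor $\tfrac14(1-u_t^2)$ is absent. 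The principal obstacle is the algebraic bookkeeping in the second step: tracking the correct assignment of summands to $\tilde{\mathcal{J}}^i_k(t,u_t)$ versus its complement as $u_t$ evolves, and verifying that the resulting reorganization of mixing weights produces precisely the stated contraction rate. A minor technical point is to justify $\dot{\hilbert} = \dot\Delta_{ik}$ for any $(i,k)\in\mathcal{I}_t$ at the (Lebesgue-null) exceptional set of times where $\mathcal{I}_t$ is non-singleton, which is handled by the classical max-of-AC-functions argument.
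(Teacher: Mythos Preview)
Your proposal is correct and follows essentially the same route as the paper. Both arguments exploit that $\mathcal{E}^{3,j}\equiv 0$ makes each $\Delta_{ik}$ absolutely continuous, pass to $X_t=\tanh(\Delta_\infty(t)/4)$, bound the mixing contributions from each $j\neq i,k$ by optimizing over the unknown position of $r^j$ between the extremal ratios (your linearity-in-$r^m/r^i$ observation is exactly the paper's monotonicity-in-$\Delta_{jk^\star}$ argument from Proposition~\ref{prop:pathwise_bound_stability_ode}, in different coordinates), and then invoke an ODE comparison. Two points where the paper is slightly more careful than your sketch: (i) it verifies explicitly that the comparison ODE \eqref{eq:thm_approx_pathwise_ode} does not explode to $u=1$ in finite time, using that the coefficient of $\tfrac{1+u}{1-u}$ in $\tilde\lambda^\star$ blows up and dominates the forcing term near the boundary --- this is needed because local Lipschitz continuity alone does not preclude finite-time blow-up; (ii) for the step $\tilde\lambda^\star_t\ge\tilde\lambda_t$, the mechanism is dropping the nonnegative cross terms under the square root and then minimizing the remaining sum termwise over $j\in S$ versus $j\notin S$, rather than the inequality $\sqrt{ab}\ge\min(a,b)$ you cite.
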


	\begin{rmk}
		Theorem~\ref{thm:pathwise_decay_approx_error} suggests an approach to constructing approximate filters with relatively small error. If $h$ is known, and we can choose $\tilde g_t = H \tilde \pi_t$ such that the error terms $\mathcal{E}^{2,i}$ and $\mathcal{E}^{3,i}$ vanish for all $i \in \mathbf{N}$, then the errors due to the stochastic term vanish, and the local time terms as well. From a numerical perspective, this is equivalent to killing the infinitesimal errors of order $\sqrt{\dt}$;~this is natural when looking for an approximate solution to the Wonham SDE.
	\end{rmk}

	\begin{rmk}
        As in the proof of Theorem \ref{thm:contraction}, by using Lemma \ref{lemma:hilbertsemicts}, it is possible to lift the assumption that $\mu, \nu \in \mathring{\mathcal{S}}^n$ in Theorem \ref{thm:pathwise_decay_approx_error}, provided one takes sufficient care in constructing the solution to the ODE \eqref{eq:thm_approx_pathwise_ode}.
    \end{rmk}
	The following corollary provides some exchangeability between $\pi_t$ and $\tilde \pi_t$ when computing the decay rates.

	\begin{corollary}\label{cor:exchanging_pi_with_pi_tilde}
		Assume the Wonham filter $\pi_t$ is observable. Theorem~\ref{thm:pathwise_decay_approx_error} holds equivalently if one substitutes $\lambda^{\star}(t, u_t)$ for $\tilde \lambda^{\star}(t,u_t)$ in \eqref{eq:thm_approx_pathwise_ode} and $\lambda^{\star}_t$ for $\tilde \lambda^{\star}_t$ in \eqref{eq:tanh_pathwise_approx_bound} and \eqref{eq:hilbert_pathwise_approx_bound}, where
		\begin{equation*}
			\lambda^{\star} (t, u_t)
			= \min_{i \neq k} \bigg\{\bigg( q_{ik} \frac{ \pi_t^i}{ \pi_t^k}
			+ \hspace{-5pt} \sum_{\substack{ j \neq i,k, \\ j \in \mathcal{J}^i_k(t, u_t)}} \hspace{-5pt} q_{j k} \frac{ \pi_t^j}{ \pi_t^k}\bigg) \frac{1 - u_t}{1+u_t}
			+ \bigg( q_{ki} \frac{ \pi_t^k}{ \pi_t^i}
			+ \hspace{-5pt} \sum_{\substack{ j \neq i,k, \\ j \in \mathcal{J}^i_k(t, u_t)}} \hspace{-5pt} q_{j i} \frac{ \pi_t^j}{ \pi_t^i}\bigg) \frac{1 + u_t}{1-u_t} \bigg\},
		\end{equation*}
			and $\mathcal{J}^i_k(t, u_t) := \Big\{ j \in \mathbf{N} \, : \, \frac{q_{j k}}{ \pi_t^k} \le \frac{q_{j i}}{\pi_t^i} \Big( \frac{1+u_t}{1-u_t} \Big)^2 \Big\}$,
		and $\lambda^{\star}_t$ is defined equivalently to \eqref{eq:thm_approx_pathwise_rate} with $\pi_t$ in place of $\tilde \pi_t$. 
	\end{corollary}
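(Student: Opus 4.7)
The plan is to replay the proof of Theorem~\ref{thm:pathwise_decay_approx_error} but, at the Hilbert-distance maximiser, to re-express every ratio using $\pi_t$ in place of $\tilde\pi_t$.

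First, under $\mathcal E^{3,i}\equiv 0$ each $\Delta_{ik}(t) = \log(\pi_t^i/\pi_t^k) - \log(\tilde\pi_t^i/\tilde\pi_t^k)$ has vanishing stochastic part: the $\di Y_t$ contributions $h^i - h^k$ from the two filters cancel. An It\^o computation then gives the drift of $\Delta_{ik}$ as
\[
\sum_{j\neq i} q_{ji}\bigg(\frac{\pi_t^j}{\pi_t^i} - \frac{\tilde\pi_t^j}{\tilde\pi_t^i}\bigg) \;-\; \sum_{j\neq k} q_{jk}\bigg(\frac{\pi_t^j}{\pi_t^k} - \frac{\tilde\pi_t^j}{\tilde\pi_t^k}\bigg) \;+\; \mathcal E^{1,i}_t - \mathcal E^{1,k}_t,
\]
which is the starting point for deriving $\tilde\lambda^\star(t,u_t)$ in Theorem~\ref{thm:pathwise_decay_approx_error}.

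Second, at any pair $(i^*,k^*)$ achieving $\mathcal H(\pi_t,\tilde\pi_t)$, extremality of the likelihood ratios $\pi^j/\tilde\pi^j$ at $j=i^*,k^*$ yields the pointwise inequalities
\[
\frac{\pi_t^j}{\pi_t^{i^*}} \le \frac{\tilde\pi_t^j}{\tilde\pi_t^{i^*}}, \qquad \frac{\pi_t^j}{\pi_t^{k^*}} \ge \frac{\tilde\pi_t^j}{\tilde\pi_t^{k^*}} \qquad (j\in \mathbf{N}),
\]
together with the identity $\pi_t^{i^*}/\pi_t^{k^*} = \big((1+u_t)/(1-u_t)\big)^2\,\tilde\pi_t^{i^*}/\tilde\pi_t^{k^*}$, where $u_t = \tanh(\mathcal H/4)$ and so $e^{\mathcal H/2} = (1+u_t)/(1-u_t)$. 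These inequalities, which are the same ones driving Theorem~\ref{thm:pathwise_decay_approx_error} but exploited in the opposite direction, allow every $\tilde\pi$-ratio appearing in the drift bound to be converted to a $\pi$-ratio. The factor $e^{\mathcal H}$ arising in this translation swaps $(1+u_t)/(1-u_t)$ and $(1-u_t)/(1+u_t)$ on the leading $(i^*,k^*)$ contribution, while for each off-diagonal $j$ the criterion selecting the tighter of the two available bounds correspondingly changes from $\tilde{\mathcal J}^i_k$ to $\mathcal J^i_k$. The resulting upper bound on the drift of $\Delta_{i^*k^*}$ is $-\lambda^\star(t,u_t) + \mathcal E^{1,i^*}_t - \mathcal E^{1,k^*}_t$, and the observability hypothesis on $\pi_t$ is precisely what makes these $\pi$-ratios and the set $\mathcal J^i_k$ well-defined.

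From here, the comparison-ODE argument for $u_t$, the reduction to $\lambda^\star_t$ via the elementary inequality $ax + b/x \ge 2\sqrt{ab}$ applied at $x=(1+u_t)/(1-u_t)$, and the passage to the bounds \eqref{eq:tanh_pathwise_approx_bound}--\eqref{eq:hilbert_pathwise_approx_bound} all carry over verbatim with $\lambda^\star$ replacing $\tilde\lambda^\star$. The main obstacle is the bookkeeping in the previous step: keeping track of how the direction of the maximiser inequalities and the factor $e^{\mathcal H}$ conspire to transform $\tilde{\mathcal J}^i_k$ into $\mathcal J^i_k$ (with the reversed inequality involving $((1+u_t)/(1-u_t))^2$) while consistently swapping the attached $(1\pm u_t)/(1\mp u_t)$ factors.
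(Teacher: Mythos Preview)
Your proposal is correct and follows essentially the same approach as the paper, which simply says ``Analogous to the proof of Theorem~\ref{thm:pathwise_decay_approx_error}.'' You have unpacked what that analogy entails: re-doing the drift estimate at the maximiser with the $\pi$-ratios in place of the $\tilde\pi$-ratios, which is exactly the content of Corollary~\ref{cor:pathwise_bound_stability_true_filter} (itself proved by the paper as ``Similar to the proof of Proposition~\ref{prop:pathwise_bound_stability_ode}''). One minor imprecision: your sentence ``the resulting upper bound on the drift of $\Delta_{i^*k^*}$ is $-\lambda^\star(t,u_t) + \mathcal E^{1,i^*}_t - \mathcal E^{1,k^*}_t$'' drops the $\sinh(\Delta_\infty/2)$ factor that should multiply the decay term (cf.\ \eqref{eq:integral_ineq_error}); the correct statement is $\di\Delta_\infty \le -2\lambda^\star(t,u_t)\sinh(\Delta_\infty/2)\dt + \max_{i,k}\{\mathcal E^{1,i}_t-\mathcal E^{1,k}_t\}\dt$, after which the $\tanh$ transformation and comparison argument proceed as you describe.
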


	\section{Contraction rates in the Hilbert projective metric}\label{sec:contraction}
	
	The aim of this section is to prove Theorem \ref{thm:contraction} and a few more results related to the stability of the nonlinear filter with respect to its initial conditions.
	
	We start by introducing a family of coordinate transformations from the interior of the probability simplex $\mathring{\mathcal{S}}^n$ to $\R^n$ that map a discrete probability distribution to its natural parameters. We derive the evolution equation for the Wonham filter in these new parametrizations, and then consider the difference between the natural parameters of the Wonham filter initialized at $\mu = \text{law}(X_0)$ and those of the Wonham filter `wrongly' initialized at $\nu \neq \mu$. By relating the $\ell_{\infty}$ norm of the difference, maximized over parametrizations, to the Hilbert projective metric, we are able to compute explicitly an exponential contraction rate in the Hilbert metric for the Wonham filter. Up until the proof of Theorem~\ref{thm:contraction}, we will regularly make the extra assumption that $\pi_0 = \mu$ and $\tilde \pi_0 =\nu$ belong to the interior of the simplex.
	
	\begin{rmk}
		For the entirety of this section, $(\pi_t)_{t \ge0}$ represents the Wonham filter initialized at $\pi_0 = \mu$, and $(\tilde \pi_t)_{t \ge0}$ the Wonham filter initialized at $\tilde \pi_0 = \nu$.
	\end{rmk}

	\subsection{Coordinate transformations}\label{sec:change_of_coords}
	
	The coordinate transformation we will consider here sends a probability distribution to what, in statistics, are called the \textit{natural (or canonical) parameters}. Natural parameters are the usual choice of parametrization for an exponential family of distributions, which have probability densities that can be written in general form as
	\begin{equation}\label{eq:exponential_family}
	p(x, \theta) = \exp \{ \theta \cdot c(x) + k(x) - \psi(\theta) \},
	\end{equation}
	where $\theta \in \R^n$ is the $n$-dimensional vector of natural parameters, $c(x)$ is the vector of \textit{sufficient statistics} of the distribution, and its $n$ components are linearly independent, $k(x)$ is a function of $x$ and $\psi(\theta)$ is the log partition function. A change of measure from $\dx$ to $\di \upsilon (x) = \exp \{k(x)\} \dx$ allows us to ignore $k(x)$, as long as $p(x, \theta)$ is understood as a density with respect to the measure $\di \upsilon(x)$ instead. We will assume $k(x)=0$ for simplicity.
	
	Our choice of studying the filtering equations in the coordinate system $\theta$ of natural parameters is motivated by Amari's theory of information geometry \cite{amari85, amari16}. If $\mu,\nu\in \mathring{\mathcal{S}}^n$, then the filtering process $\pi_t$ lives in $\mathring{\mathcal{S}}^n$, that is, the interior of the probability simplex. In the language of information geometry, $\mathring{\mathcal{S}}^n$ is an $n$-dimensional statistical manifold, with $\theta$ (and its dual affine, the \textit{expectation parameter} $\eta$) as a global chart. The Riemannian metric for $\mathring{\mathcal{S}}^n$ is the Fisher Information, which infinitesimally agrees with the KL-divergence. In this paper, we will not make use of the differential geometrical structures for $\mathring{\mathcal{S}}^n$ developed by Amari, as our choice of norm for the distance between probability vectors is the Hilbert norm, which does not allow for a smooth geometry. However, it will still be convenient to work in the global coordinate system given by the $\theta$ parametrization. 
	
	Consider a probability vector $p \in \mathring{\mathcal{S}}^n \subset \R^{n+1}$. Note that $\mathring{\mathcal{S}}^n$ is an $n$-dimensional exponential family, and we can write a discrete distribution $p \in \mathring{\mathcal{S}}^n$ in the form \eqref{eq:exponential_family} by fixing $k \in \mathbf{N}$ and choosing $c^i(x) = \delta_{a_i}(x)$ (for $a_i \in \mathbb{S}$). Choosing $k \in \mathbf{N}$, we define the diffeomorphism $\theta_k : \mathring{\mathcal{S}}^n \rightarrow \R^n $ that maps $p \mapsto \theta_k$ as follows:
	\begin{equation}\label{eq:theta-trans}
	\theta_k^i = \log \frac{p^i}{p^k} , \quad \forall i \in \mathbf{N}.
	\end{equation}
	We remark that $\theta^k_k = 0$ could be ignored as an entry of the vector $\theta_k$ (and it can be `skipped'), so that indeed $\theta_k \in \R^{k-1} \times \{ 0 \} \times \R^{n-k} \cong \R^n$.
	
	The inverse map $\theta_k^{-1}$ is given by
	\begin{equation}\label{eq:theta-inv}
	p^i = \frac{\exp{\theta_k^i}}{1 + \sum_{j \neq k} \exp{\theta_k^j}}, \quad \forall i \in \mathbf{N}.
	\end{equation}
	
	We now would like to apply the coordinate transformation \eqref{eq:theta-trans} to $(\pi_t)_{t \ge 0}$ and $(\tilde \pi_t)_{t \ge 0}$ and derive evolution equations for the parameters $\theta_k(\pi_t)$ and $\theta_k(\tilde \pi_t)$. For all $k \in \mathbf{N}$, for notational simplicity define
	\begin{equation*}
	\theta_k(t) := \theta_k(\pi_t),\quad \tilde \theta_k(t) : = \theta_k(\tilde \pi_t),
	\end{equation*}
	so that, component-wise, we have
	\begin{equation*}
	\theta^i_k(t) := \log \frac{\pi^i_t}{\pi^k_t},\quad \tilde \theta^i_k(t) : = \log \frac{\tilde \pi^i_t}{\tilde \pi^k_t}, \quad \forall (i, k) \in \mathbf{N} \times \mathbf{N}.
	\end{equation*}
	
	The following lemma guarantees that these processes are almost surely well-defined for all $t < \infty$. For its proof we refer to \cite{chigansky07}. 
	\begin{lemma}[Lemma~2.1 in \cite{chigansky07}]\label{lemma:non_divergence_to_boundary}
		Denote by $\pi_{s,t}(\mu)$ the solution at time $t \ge 0$ to \eqref{eq:wonhamNorm} initialized at time $s \le t$ with $\pi_s = \mu$. Then
		\begin{equation*}
		\Prob \big( \pi_{s,t}(\mu) \in \mathring{\mathcal{S}}^n \textrm{ for all } \mu \in \mathring{\mathcal{S}}^n \textrm{ and all } 0 \le s \le t < \infty  \big) = 1.
		\end{equation*}
	\end{lemma}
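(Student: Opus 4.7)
The plan is to reduce the nonlinear Wonham SDE \eqref{eq:wonhamNorm} to a pathwise linear ODE with non-negative off-diagonal coefficients, from which strict positivity follows by a componentwise Gr\"onwall-type estimate. First I would pass to the unnormalized filter $\rho_t$ satisfying the Zakai equation
\begin{equation*}
\di \rho_t = Q^{\top} \rho_t \dt + H \rho_t \di Y_t, \qquad \rho_s = \mu,
\end{equation*}
under the Girsanov-shifted reference measure on which $Y$ is a standard Brownian motion independent of $X$. Since $\pi_{s,t}^i(\mu) = \rho_t^i / \bigl( \sum_j \rho_t^j \bigr)$, strict positivity of every component of $\rho_t$ is equivalent to $\pi_{s,t}(\mu) \in \mathring{\mathcal{S}}^n$.

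Next I would apply the \emph{robust} (pathwise) change of variables $\phi_t^i := e^{-h^i Y_t} \rho_t^i$. A direct application of It\^o's formula, noting that $\di \langle \rho^i, e^{-h^i Y} \rangle_t = -(h^i)^2 \phi_t^i \dt$, cancels the stochastic integral and yields the pathwise linear ODE
\begin{equation*}
\frac{\di \phi_t^i}{\dt} = \Bigl[ -\tfrac12 (h^i)^2 + q_{ii} \Bigr] \phi_t^i + \sum_{j \ne i} q_{ji} \, e^{(h^j - h^i) Y_t} \, \phi_t^j,
\end{equation*}
whose coefficients are continuous in $t$ for each fixed continuous trajectory of $Y$. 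Because $q_{ji} \ge 0$ for $j \ne i$, dropping the non-negative off-diagonal sum gives a one-sided differential inequality and hence the Gr\"onwall-type lower bound
\begin{equation*}
\phi_t^i \;\ge\; \mu^i \exp\!\Bigl\{ \bigl[ -\tfrac12 (h^i)^2 + q_{ii} \bigr] (t-s) \Bigr\} > 0
\end{equation*}
for every $i$, whenever $\mu^i > 0$. Unwinding the transformation gives $\pi_{s,t}^i(\mu) = \phi_t^i e^{h^i Y_t} / \sum_j \phi_t^j e^{h^j Y_t} > 0$, as required.

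Finally, I would verify that the negligible set on which the It\^o computation fails can be chosen independently of $(s,\mu)$. This is essentially automatic from the robust representation: the pathwise ODE for $\phi$ is well-posed for every fixed continuous $Y$-trajectory and every initial condition in $\mathring{\mathcal{S}}^n$, so the positivity conclusion holds simultaneously for all $(s,\mu)$ on a single $\Prob$-full event. The main obstacle I anticipate is not conceptual but bookkeeping: one must carefully justify the pathwise reading of the It\^o step (standard but delicate for the Zakai equation), and upgrade the a.s.\ positivity statement from a countable dense set of initial times to all $s \in [0,\infty)$, which follows from continuity of the flow $(s,\mu) \mapsto \pi_{s,t}(\mu)$.
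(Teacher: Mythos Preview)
The paper does not give its own proof of this lemma; it simply defers to \cite{chigansky07}. Your argument via the robust (pathwise) representation of the Zakai equation is the standard one and is essentially what appears in that reference, so there is nothing to contrast.

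Two small bookkeeping points worth tightening. First, the initial value is $\phi_s^i = e^{-h^i Y_s}\mu^i$, not $\mu^i$, so the Gr\"onwall lower bound carries an extra factor $e^{-h^i Y_s}$; this does not affect strict positivity. Second, dropping the off-diagonal sum in the ODE for $\phi^i$ presupposes $\phi_t^j \ge 0$ for all $j$, which you have not yet established at that point. This follows because the time-dependent coefficient matrix is Metzler (non-negative off-diagonals), so the flow preserves the non-negative orthant; stating this explicitly closes the loop before invoking the componentwise lower bound.
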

 	\begin{corollary}\label{cor:finiteness_of_theta_and_hilbert}
		Assume $\mu, \nu \in \mathring{\mathcal{S}}^n$. We have that, almost surely,
		\begin{equation*}
		\hilbert (\pi_t, \tilde \pi_t), \, |\theta_k^i(t)|, \, |\tilde \theta_k^i(t)| < \infty, \quad \forall \, (i,k) \in \mathbf{N} \times \mathbf{N},
		\end{equation*}
		for all times $0 \le t < \infty$.
	\end{corollary}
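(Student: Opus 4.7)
The corollary is essentially a direct read-off from Lemma~\ref{lemma:non_divergence_to_boundary}, applied to both filters simultaneously, so the plan is short. First, I would invoke Lemma~\ref{lemma:non_divergence_to_boundary} twice (once with initial datum $\mu$ and once with initial datum $\nu$) to obtain two $\Prob$-null sets $N_\mu, N_\nu$ off which $\pi_t(\omega)$, respectively $\tilde\pi_t(\omega)$, stays in $\mathring{\mathcal{S}}^n$ for every $0\le t<\infty$. Taking the union $N=N_\mu\cup N_\nu$, which is still $\Prob$-null, gives a single event of full probability on which $\pi_t^i>0$ and $\tilde\pi_t^i>0$ simultaneously for all $i\in\mathbf{N}$ and all $t<\infty$.

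On this event, finiteness of the three quantities follows by inspection. For the natural parameters, strict positivity of every $\pi_t^i$ and $\tilde\pi_t^i$ makes each ratio $\pi_t^i/\pi_t^k$ and $\tilde\pi_t^i/\tilde\pi_t^k$ a finite positive real, so $\theta_k^i(t)=\log(\pi_t^i/\pi_t^k)$ and $\tilde\theta_k^i(t)=\log(\tilde\pi_t^i/\tilde\pi_t^k)$ lie in $\R$. For the Hilbert distance, since no component vanishes, $\pi_t\sim\tilde\pi_t$ in the sense of \eqref{eq:hilbert_metric}, and
\[
\hilbert(\pi_t,\tilde\pi_t)=\log\frac{\max_i (\pi_t^i/\tilde\pi_t^i)}{\min_i (\pi_t^i/\tilde\pi_t^i)}
\]
is the logarithm of the ratio of two strictly positive, finite numbers, hence finite. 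There is no real obstacle: the work was already done in Lemma~\ref{lemma:non_divergence_to_boundary}, and the corollary is just the statement that staying in the open simplex is exactly what is needed to make log-ratios and the Hilbert pseudo-metric finite.
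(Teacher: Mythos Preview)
Your proposal is correct and matches the paper's treatment: the paper states the corollary without proof, as an immediate consequence of Lemma~\ref{lemma:non_divergence_to_boundary}. One minor remark: as stated, Lemma~\ref{lemma:non_divergence_to_boundary} already gives a single full-probability event on which $\pi_{0,t}(\mu')\in\mathring{\mathcal{S}}^n$ simultaneously for \emph{all} $\mu'\in\mathring{\mathcal{S}}^n$ and all $t<\infty$, so you do not need to invoke it separately for $\mu$ and $\nu$ and then union the null sets; a single application covers both filters at once.
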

	The proof of Lemma \ref{lemma:non_divergence_to_boundary} also directly yields the following alternative result.
 \begin{lemma}\label{lemma:non_divergence_to_boundary2}
 Denote by $\pi_{t}(\mu)$ the solution at time $t \ge 0$ to \eqref{eq:wonhamNorm} initialized at time $0 \le t$ with $\pi_0 = \mu$. Suppose $q_{ij}>0$ for all $i\neq j$. Then
   \begin{equation*}
	   \Prob \big( \pi_{t}(\mu) \in \mathring{\mathcal{S}}^n \textrm{ for all } \mu \in {\mathcal{S}}^n \textrm{ and all } 0 < t < \infty  \big) = 1.
	   \end{equation*}  
    \end{lemma}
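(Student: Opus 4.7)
The plan is to follow the Kallianpur--Striebel approach used in the proof of Lemma \ref{lemma:non_divergence_to_boundary} in \cite{chigansky07}, exploiting the strong irreducibility assumption $q_{ij} > 0$ for $i \neq j$ to dispense with the interior initial condition.

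Under a reference measure $\tilde \Prob$ obtained via Girsanov's theorem, the processes $X$ and $Y$ become independent, with $X$ preserving its original law and $Y$ a standard Brownian motion. The Kallianpur--Striebel formula then yields, for each $i \in \mathbf{N}$,
\begin{equation*}
\pi_t^i = \frac{\tilde \E [\mathbf{1}_{\{X_t = a_i\}} Z_t \mid \mathcal{Y}_t]}{\tilde \E [Z_t \mid \mathcal{Y}_t]}, \qquad Z_t = \exp\Big\{\int_0^t h(X_s) \di Y_s - \tfrac{1}{2} \int_0^t h(X_s)^2 \ds \Big\},
\end{equation*}
with $Z_t > 0$ almost surely as $h$ is bounded. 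I would first show that the numerator is strictly positive almost surely, for every fixed $t>0$ and every $i \in \mathbf{N}$. Since $X$ and $Y$ are independent under $\tilde \Prob$, the conditional expectation reduces, for almost every realization of $Y$, to an integral of $Z_t$ against the law of $X$ restricted to $\{X_t = a_i\}$; the assumption $q_{ij}>0$ for all $i \neq j$ ensures $\Prob(X_t = a_i) > 0$ for every initial law $\mu \in \mathcal{S}^n$ and every $t > 0$, from which the desired positivity follows.

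To promote this pointwise-in-$t$ positivity to a simultaneous statement over all $t > 0$, I would combine it with Lemma \ref{lemma:non_divergence_to_boundary}. Fixing a countable dense set $\mathcal{D} \subset (0, \infty)$, the previous step provides a full-measure event on which $\pi_q \in \mathring{\mathcal{S}}^n$ for every $q \in \mathcal{D}$;~intersecting with the full-measure event from Lemma \ref{lemma:non_divergence_to_boundary} and invoking the flow property $\pi_t = \pi_{q,t}(\pi_q)$, Lemma \ref{lemma:non_divergence_to_boundary} applied with $s = q$ and $\mu' = \pi_q(\omega) \in \mathring{\mathcal{S}}^n$ gives $\pi_t \in \mathring{\mathcal{S}}^n$ for any $t > q$. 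Choosing $q \in \mathcal{D}$ with $q < t$ then covers every $t > 0$.

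The main subtlety will be to handle the null sets cleanly: the positivity of the numerator, the Kallianpur--Striebel identity and the flow relation each hold on their own full-measure events, so one must intersect over the countable family $\mathcal{D}$ rather than over all of $(0, \infty)$. The uniformity in $(s, \mu)$ built into Lemma \ref{lemma:non_divergence_to_boundary} is precisely what makes this reduction possible.
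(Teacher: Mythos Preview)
Your proposal is correct and follows essentially the same Kallianpur--Striebel route that the paper has in mind: the paper's entire argument is the single sentence that the proof of Lemma~\ref{lemma:non_divergence_to_boundary} in \cite{chigansky07} ``directly yields'' the result once $q_{ij}>0$ forces $\Prob(X_t=a_i\mid X_0=a_j)>0$ for all $t>0$. Your version spells this out and adds a bootstrap through a countable dense set and the flow property; this is sound, though arguably redundant, since the argument in \cite{chigansky07} already produces a version of the unnormalized filter that is positive for all $t$ simultaneously.

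One point you should make explicit: the lemma asks for positivity \emph{uniformly} over $\mu\in\mathcal{S}^n$ on a single full-measure event, whereas your first step as written establishes it for each fixed $\mu$. The gap is closed by the linearity of the Zakai equation in the initial condition, so that $\rho_t(\mu)=\sum_j \mu^j \rho_t(e_j)$; it then suffices to verify the finitely many conditions $\rho_t^i(e_j)>0$ for $i,j\in\mathbf{N}$, and the intersection of these $(n+1)^2$ full-measure events gives the required uniformity in $\mu$.
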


	We now proceed to study the dynamics of the natural parameters $\theta_k^i(t)$ and $\tilde \theta_k^i(t)$. For all pairs of indices $(i, k) \in \mathbf{N} \times \mathbf{N}$, define the difference process
	\begin{equation}\label{eq:delta_ik_def}
	\Delta_{ik}(t) : = (\theta_k^i(t) - \tilde \theta_k^i(t))_{t \ge 0},
	\end{equation}
	where $\Delta_{ii} = 0$ for all $i \in \mathbf{N}$.
	
	We start with the following proposition.
	
	\begin{prop}\label{prop:evol_theta}
		Assume $\mu, \nu \in \mathring{\mathcal{S}}^n$. For all $0 \le t < \infty$ and all pairs of indices $(i, k) \in \mathbf{N} \times \mathbf{N}$, the process $\Delta_{ik}(t)$ is $C^1$ in time and has the dynamics
		\begin{align}\label{eq:evol_diff_theta}
		\frac{\di}{\dt} \Delta_{ik}(t) &= 
		-\sum_{\substack{j = 0 \\ j\neq k}}^{n} q_{jk} \big( e^{\theta_k^j} - e^{\tilde \theta_k^j} \big)
		+ \sum_{\substack{j = 0 \\ j\neq i}}^{n} q_{ji} \big( e^{\theta_i^j} - e^{\tilde \theta_i^j} \big), \nonumber \\
		\Delta_{ik}(0) &= \log \frac{\mu^i}{\mu^k} - \log \frac{\nu^i}{\nu^k}.
		\end{align}
	\end{prop}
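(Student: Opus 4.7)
The plan is to apply Itô's formula to $\log \pi_t^i$ (and $\log \tilde{\pi}_t^i$) using the Wonham SDE \eqref{eq:wonhamNorm}, then subtract suitably to form $\Delta_{ik}(t) = \log(\pi_t^i/\pi_t^k) - \log(\tilde{\pi}_t^i/\tilde{\pi}_t^k)$, and observe that all the stochastic terms and the Itô correction terms miraculously cancel, leaving only the deterministic drift coming from the $Q$-matrix.

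More precisely, I would first compute componentwise from \eqref{eq:wonhamNorm} that
\begin{equation*}
    \di \pi_t^i = \sum_{j} q_{ji} \pi_t^j \dt + (h^i - \pi_t^\top h)\pi_t^i \big( \di Y_t - \pi_t^\top h \dt \big),
\end{equation*}
with quadratic variation $\di \langle \pi^i \rangle_t = (h^i - \pi_t^\top h)^2 (\pi_t^i)^2 \dt$. By Lemma~\ref{lemma:non_divergence_to_boundary} (since $\mu \in \mathring{\mathcal{S}}^n$), $\pi_t^i$ stays strictly positive almost surely, so I may apply Itô's formula to $\log \pi_t^i$ to get
\begin{equation*}
    \di \log \pi_t^i = \frac{1}{\pi_t^i}\sum_j q_{ji}\pi_t^j \dt + (h^i - \pi_t^\top h)\big(\di Y_t - \pi_t^\top h \dt\big) - \tfrac{1}{2}(h^i - \pi_t^\top h)^2 \dt.
\end{equation*}
Subtracting the analogous expression for $k$ yields $\di \theta_k^i(t)$, and similarly for $\di \tilde{\theta}_k^i(t)$. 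Taking the difference $\di \Delta_{ik}(t)$, the $\di Y_t$ integrands are $(h^i - h^k)$ for both $\pi$ and $\tilde{\pi}$, so the stochastic integral part vanishes exactly.

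The key algebraic step will then be to check that the remaining observation-related finite-variation terms cancel. After the subtraction, these are
\begin{equation*}
    -(h^i - h^k)(\pi_t^\top h - \tilde{\pi}_t^\top h)\dt - \tfrac{1}{2}\Big[ (h^i - \pi_t^\top h)^2 - (h^k - \pi_t^\top h)^2 - (h^i - \tilde{\pi}_t^\top h)^2 + (h^k - \tilde{\pi}_t^\top h)^2 \Big]\dt.
\end{equation*}
Using the identity $(a-p)^2 - (b-p)^2 = (a-b)(a+b-2p)$ twice, the bracketed expression simplifies to $-2(h^i - h^k)(\pi_t^\top h - \tilde{\pi}_t^\top h)$, which exactly cancels the first term. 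I expect this cancellation — really a restatement of the fact that the Hilbert metric on $\pi_t$ is insensitive to multiplicative (non-normalized) factors — to be the only substantive computation, and it is routine.

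What remains is the drift from the $Q$-matrix: the term for $\theta_k^i$ is $\big(\sum_j q_{ji} \pi_t^j / \pi_t^i - \sum_j q_{jk}\pi_t^j / \pi_t^k\big)\dt$, and subtracting the corresponding $\tilde{\theta}$-term, I rewrite $\pi_t^j/\pi_t^k = e^{\theta_k^j}$ (and analogously for $\tilde{\pi}$). Since $e^{\theta_k^k} = e^{\tilde{\theta}_k^k} = 1$ the $j = k$ summand vanishes, and similarly for the $i$-sum, producing exactly \eqref{eq:evol_diff_theta}. Finally, because $(\pi_t, \tilde{\pi}_t)$ are continuous semimartingales staying in $\mathring{\mathcal{S}}^n \times \mathring{\mathcal{S}}^n$ by Corollary~\ref{cor:finiteness_of_theta_and_hilbert}, the right-hand side of \eqref{eq:evol_diff_theta} is a continuous function of time almost surely, so $\Delta_{ik}(\cdot)$ is indeed $C^1$ on $[0,\infty)$. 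The initial condition follows directly from the definitions.
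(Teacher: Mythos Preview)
Your proposal is correct and follows essentially the same approach as the paper: apply It\^o's formula componentwise, subtract to form $\Delta_{ik}$, and verify that the stochastic integrals and the $h$-related drift terms cancel, leaving only the $Q$-matrix drift. The paper organizes the computation slightly differently by working with the innovation process $B_t = Y_t - \int_0^t \pi_s^\top h\,\ds$ rather than $\di Y_t$ directly, but the content of the argument and the key cancellation are identical.
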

	
	\begin{proof}
		For $i = k$ the process $\Delta_{kk}$ is identically 0, so the statement holds trivially. Assume $i \neq k$.
		Consider $\theta_k^i(t) = \log (\pi^i_t / \pi^k_t)$ for $i \neq k$. We apply It\^o's formula and obtain that, for any choice of $k \in \mathbf{N}$, and $i \neq k$, we have
		\begin{align} \label{eq:evol_theta_ik}
		\di \log \frac{\pi^i}{\pi^k} (t) &= -\sum_{\substack{j = 0 \\ j\neq k}}^{n} q_{jk} \frac{\pi^j_t}{\pi^k_t} \dt
		+ \sum_{\substack{j = 0 \\ j\neq i}}^{n} q_{ji} \frac{\pi^j_t}{\pi^i_t} \dt + (q_{ii}-q_{kk}) \dt + (h^i - h^k) \di B_t \nonumber \\
		&\quad + \frac{1}{2} \Big( (h^k)^2 - (h^i)^2 + 2(h^i - h^k) \pi_t^{\top} h \Big) \dt, \nonumber \\
		\log  \frac{\pi^i}{\pi^k} (0) &= \log  \frac{\mu^i}{\mu^k},
		\end{align}
		where for readability we have introduced the innovation process $B_t = Y_t - \int_0^t \pi_s^{\top} h \ds$, which is a $\{\mathcal{Y}_{t}\}$-adapted Brownian motion (see e.g.~\cite[Proposition~2.30]{bain09}).
		Similarly,
		\begin{align*}
		\di \log \frac{\tilde\pi^i}{\tilde\pi^k} (t) &= -\sum_{\substack{j = 0 \\ j\neq k}}^{n} q_{jk} \frac{\tilde\pi^j_t}{\tilde\pi^k_t} \dt
		+ \sum_{\substack{j = 0 \\ j\neq i}}^{n} q_{ji} \frac{\tilde\pi^j_t}{\tilde\pi^i_t} \dt + (q_{ii}-q_{kk}) \dt + (h^i - h^k) \di B_t \nonumber \\
		&\quad + \frac{1}{2} \Big( (h^k)^2 - (h^i)^2 + 2(h^i - h^k) \tilde\pi_t^{\top} h \Big) \dt + (h^i - h^k) \big( \pi_t^{\top} h - \tilde\pi_t^{\top} h \big) \dt, \nonumber \\
		\log  \frac{\tilde\pi^i}{\tilde\pi^k}  (0) &= \log \frac{\nu^i}{\nu^k}.
		\end{align*}
		Subtracting the two equations, we see that the difference has absolutely continuous dynamics
		\begin{align}\label{eq:evol_diff_theta_fractions}
		\di \bigg( \log  \frac{\pi^i}{\pi^k}(t)  - \log \frac{\tilde\pi^i}{\tilde\pi^k}(t) \bigg) &= 
		-\sum_{\substack{j = 0 \\ j\neq k}}^{n} q_{jk} \bigg( \frac{\pi^j_t}{\pi^k_t} - \frac{\tilde\pi^j_t}{\tilde\pi^k_t} \bigg) \dt
		+ \sum_{\substack{j = 0 \\ j\neq i}}^{n} q_{ji} \bigg( \frac{\pi^j_t}{\pi^i_t} - \frac{\tilde\pi^j_t}{\tilde\pi^i_t} \bigg) \dt, \nonumber \\
		\log  \frac{\pi^i}{\pi^k}(0)  - \log \frac{\tilde\pi^i}{\tilde\pi^k} (0) &= \log \frac{\mu^i}{\mu^k} - \log \frac{\nu^i}{\nu^k}.
		\end{align}
		Noting that the right-hand side of the above equation is continuous in time (since $\pi_t$ and $\tilde \pi_t$ are both continuous), we have that the derivative of $\Delta_{ik}$ exists and is continuous for every $t \ge 0$, and \eqref{eq:evol_diff_theta} follows.
	\end{proof}
	
	\subsection{The Hilbert error}
	Comparing \eqref{eq:delta_ik_def} with \eqref{eq:hilbert_metric}, we now observe that the Hilbert norm can be expressed through the maximal process
	\begin{equation*}
	\Delta_{\infty}(t) := \max_{k \in \mathbf{N}} \norm{\theta_k(t) - \tilde \theta_k(t)}_{\ell^\infty} = \max_{(i,k) \in \mathbf{N} \times \mathbf{N}} \Delta_{ik}(t).
	\end{equation*}
	This can be seen easily by observing that
	\begin{align} \label{eq:Delta_equal_H}
	\Delta_{\infty}(t)
	&= \max_{(i,k) \in \mathbf{N} \times \mathbf{N}} \bigg( \log  \frac{\pi^i}{\pi^k}(t)  - \log \frac{\tilde\pi^i}{\tilde\pi^k}(t) \bigg)
	= \max_{(i,k) \in \mathbf{N} \times \mathbf{N}} \bigg( \log  \frac{\pi^i}{\tilde \pi^i}(t)  - \log \frac{\pi^k}{\tilde\pi^k}(t) \bigg) \nonumber \\
	&= \max_{i \in \mathbf{N}} \log  \frac{\pi^i}{\tilde \pi^i}(t)  - \min_{k \in \mathbf{N}} \log \frac{\pi^k}{\tilde\pi^k}(t) = \mathcal{H}(\pi_t, \tilde\pi_t),
	\end{align}
	where the last equality follows by monotonicity of log.
	
	We want to study the evolution in time of the stochastic process $\Delta_{\infty}(t)$. We here adapt some arguments from \cite{bax-chiga-lip04}, since it turns out that our difference processes $\Delta_{ik}$ of Proposition~\ref{prop:evol_theta} have dynamics somewhat similar to the equations of the smoother process considered in \cite[Section~5.2, Eq.~5.6 \& Eq.~5.7]{bax-chiga-lip04}.
	
	We will need the following lemma in what follows. 
	\begin{lemma}[Theorem A.6.3 in Dupuis and Ellis \cite{dupuis_ellis}]\label{lemma:dupuis}
		Let $g \, : \, [0,1] \rightarrow \R$ be an absolutely continuous function. Then for every real number $r \in \R$, the set $\{t \, : \, g(t) = r, \dot{g}(t) \neq 0 \}$ has Lebesgue measure $0$.
	\end{lemma}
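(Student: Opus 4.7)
The plan is to exploit the decomposition $A_r = A_r^+ \cup A_r^-$ where
\[A_r := \{t \in [0,1] : g(t) = r,\ \dot g(t) \neq 0\}, \qquad A_r^\pm := \{t \in A_r : \pm\dot g(t) > 0\},\]
and to argue that each of $A_r^\pm$ consists entirely of isolated points of $[0,1]$, hence is at most countable, hence Lebesgue null. Since $g$ is absolutely continuous it is differentiable almost everywhere, so the set where $\dot g$ is undefined contributes nothing to the measure of $A_r$ and may be safely discarded at the outset.

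For the key step, I would fix $t \in A_r^+$ and use the very definition of the derivative to produce $\delta_t > 0$ such that
\[\frac{g(s) - g(t)}{s - t} > 0 \quad \text{for all } s \in (t - \delta_t, t + \delta_t) \setminus \{t\}.\]
Combined with $g(t) = r$, this immediately yields $g(s) > r$ on $(t, t + \delta_t)$ and $g(s) < r$ on $(t - \delta_t, t)$, so $t$ is an isolated point of the level set $g^{-1}(r)$, and in particular of $A_r^+$.

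Countability of $A_r^+$ then follows by associating to each $t$ a rational $q_t \in (t, t + \delta_t/2) \cap \mathbb{Q}$ and verifying that $t \mapsto q_t$ is injective: given $t < t'$ in $A_r^+$, the case $t' < t + \delta_t$ is excluded because $g(t') = r$ would contradict the isolation just established, while the case $t' \geq t + \delta_t$ forces $q_{t'} > t' \geq t + \delta_t > q_t$. The argument for $A_r^-$ is identical modulo sign changes, and since $A_r = A_r^+ \cup A_r^-$ up to a null set, the lemma follows.

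There is essentially no substantive obstacle — the statement reduces to the elementary observation that transverse crossings of a level are locally isolated. The only subtlety worth flagging is the preliminary reduction to the full-measure set where $\dot g$ exists, which is automatic from absolute continuity of $g$; thereafter the argument is purely order-theoretic and uses no further structure of $g$.
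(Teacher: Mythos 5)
Your argument is correct, and in fact proves something strictly stronger than the lemma asserts: you show that $\{t : g(t)=r,\ \dot g(t)\neq 0\}$ is \emph{countable}, not merely Lebesgue null. The paper itself does not prove this lemma --- it simply cites Theorem A.6.3 of Dupuis and Ellis --- so there is no in-paper proof to compare against. Your route (every transverse crossing of the level $r$ is an isolated point of $g^{-1}(r)$, hence the set is discrete, hence countable) is elementary and self-contained, and crucially uses nothing about $g$ beyond pointwise differentiability at the points in question; absolute continuity only enters to guarantee that the non-differentiability set is negligible. The more common textbook proof, and the one I believe underlies the Dupuis--Ellis reference, is the ``auxiliary function'' trick: $h(t):=|g(t)-r|$ is absolutely continuous hence differentiable a.e., but at any $t$ with $g(t)=r$ and $\dot g(t)\neq 0$ the one-sided derivatives of $h$ are $\pm\dot g(t)$ and disagree, so $h$ is not differentiable there; thus $A_r$ sits inside the null set where $h$ fails to be differentiable. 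That argument is shorter but only yields measure zero, not countability, and it genuinely uses absolute continuity of the auxiliary function $h$. Both are valid; yours is arguably the more informative of the two.
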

	
	In what follows, when we say that an adapted stochastic process $Z(t,\omega)$ is absolutely continuous or has absolutely continuous paths (a.s.), we mean not only that it can be written as $\di Z(t,\omega)=g(t,\omega)\dt$ with $g\in L^1([0,t])$, for all $t>0$ (a.s.), but also that the weak derivative $g(t,\omega)$ is jointly measurable and adapted to the underlying filtration.
	The next lemma confirms that this is the case for the process $\Delta_{\infty}(t, \omega)$.
	
	\begin{lemma}\label{lemma:delta_inf_measurable}
		Assume $\mu, \nu \in \mathring{\mathcal{S}}^n$. The stochastic process $(t,\omega)\mapsto\Delta_{\infty}(t, \omega)$ has absolutely continuous paths (in particular, it is predictable).
	\end{lemma}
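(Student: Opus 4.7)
The plan is to show absolute continuity of $\Delta_\infty$ pathwise by using that it is the pointwise maximum of finitely many $C^1$ processes, and then to exhibit a jointly measurable, adapted version of the weak derivative via a measurable selection from the argmax set, with Lemma~\ref{lemma:dupuis} handling ambiguity of the selection.

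First, by Proposition~\ref{prop:evol_theta}, for each pair $(i,k) \in \mathbf{N}\times\mathbf{N}$ the process $\Delta_{ik}(t)$ is a.s.\ $C^1$ in $t$ with derivative $\dot{\Delta}_{ik}(t)$ given explicitly by \eqref{eq:evol_diff_theta}. Since the pointwise maximum of finitely many absolutely continuous functions is absolutely continuous, $\Delta_\infty(t,\omega) = \max_{(i,k)} \Delta_{ik}(t,\omega)$ has absolutely continuous paths a.s.

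Next I would exhibit an adapted, jointly measurable version of the weak derivative. For each $(t,\omega)$, let $(i^\ast(t,\omega), k^\ast(t,\omega))$ denote the lexicographically smallest pair $(i,k)$ attaining the maximum $\Delta_\infty(t,\omega) = \Delta_{ik}(t,\omega)$. Since $\{\Delta_{ik}\}_{(i,k)}$ is a finite family of jointly measurable, $\{\mathcal{Y}_t\}$-adapted processes (with continuous paths), the selection $(i^\ast,k^\ast)$ is itself jointly measurable and adapted. Define
\begin{equation*}
g(t,\omega) := \dot{\Delta}_{i^\ast(t,\omega)\, k^\ast(t,\omega)}(t,\omega).
\end{equation*}
Because the right-hand side of \eqref{eq:evol_diff_theta} is continuous in $t$ and an adapted, jointly measurable function of $(t,\omega)$ for each fixed pair, $g$ is jointly measurable and $\{\mathcal{Y}_t\}$-adapted.

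The main obstacle is to verify that $g$ coincides, for a.e.\ $t$, with the weak derivative of $\Delta_\infty$, so that the particular choice of selection is immaterial. Fix $\omega$ in the full-measure set on which every $\Delta_{ik}$ is $C^1$. At any $t$ where $\Delta_\infty$ is differentiable and $(i,k) \in \mathcal{I}_t$, the inequalities $\Delta_{ik}(s) \le \Delta_\infty(s)$ for all $s$, combined with $\Delta_{ik}(t) = \Delta_\infty(t)$, force $\dot{\Delta}_\infty(t) = \dot{\Delta}_{ik}(t)$ by a standard one-sided derivative argument. It remains to show that whenever both $(i,k),(j,l) \in \mathcal{I}_t$ one has $\dot{\Delta}_{ik}(t) = \dot{\Delta}_{jl}(t)$: applying Lemma~\ref{lemma:dupuis} with $r=0$ to each of the finitely many $C^1$ functions $\Delta_{ik}-\Delta_{jl}$ shows that the set $\{t : \Delta_{ik}(t) = \Delta_{jl}(t),\ \dot{\Delta}_{ik}(t) \ne \dot{\Delta}_{jl}(t)\}$ has Lebesgue measure zero, so outside a null set of times any selection from $\mathcal{I}_t$ yields the same derivative. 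Hence $\dot{\Delta}_\infty = g$ a.e., and since $\Delta_\infty$ is continuous and adapted it is in particular predictable.
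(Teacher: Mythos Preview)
Your proof is correct and takes a genuinely different route from the paper. The paper argues by induction on the number of processes, repeatedly using the identity $a\vee b=\tfrac12(a+b+|a-b|)$ together with the chain rule $\di|F(t)|=\operatorname{sign}(F(t))\,\dot F(t)\dt$, which at each step produces an explicit adapted weak derivative; it then invokes Filippov's implicit function lemma in the following Lemma~\ref{lemma:measurable_selection} to obtain a predictable argmax selection and identify the derivative as $\dot\Delta_{i^\star k^\star}$. You instead go directly: first note that a finite max of $C^1$ functions is absolutely continuous, then build an adapted selection by hand (lexicographic minimum of the argmax), and verify that $g=\dot\Delta_{i^\ast k^\ast}$ is an a.e.\ weak derivative via the tangency argument. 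This effectively merges the paper's Lemmas~\ref{lemma:delta_inf_measurable} and~\ref{lemma:measurable_selection} into a single step and avoids Filippov altogether. One minor remark: your appeal to Lemma~\ref{lemma:dupuis} is not strictly needed, since your tangency argument already shows that at every point of differentiability of $\Delta_\infty$ (a full-measure set) one has $\dot\Delta_\infty(t)=\dot\Delta_{ik}(t)$ for \emph{every} $(i,k)\in\mathcal I_t$, so consistency across selections is automatic there; the lemma does, however, give the same conclusion by an independent route and so does no harm.
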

	\begin{proof}
		Fix an arbitrary $k \in \mathbf{N}$. Start by considering the processes $\Delta^{\star}_{i,k}(t) = \Delta_{0k} \vee \Delta_{1k} \vee \dots \vee \Delta_{ik}$ for $i \in \mathbf{N}$. We proceed by induction to prove absolute continuity of $\Delta^{\star}_{n,k}(t) = \max_{i \in \mathbf{N}} \Delta_{ik}$. Trivially, $\Delta^{\star}_{0,k}(t) = \Delta_{0k}(t)$ is absolutely continuous, since it is either constant 0 by definition (if $k=0$), or is absolutely continuous by Proposition~\ref{prop:evol_theta} (if $k \neq 0$). Consider the case $i = 1$, with $\Delta^{\star}_{1,k}(t) = \Delta_{0k}(t) \vee \Delta_{1k}(t)$. Recall that $a \vee b = \frac{1}{2} ( a + b + |a-b| )$. Then
		\begin{equation*}
		\Delta^{\star}_{1,k}(t) = \frac{1}{2} \big( \Delta_{0k}(t) + \Delta_{1k}(t) + |\Delta_{0k}(t) - \Delta_{1k}(t) |   \big).
		\end{equation*}
		By Proposition~\ref{prop:evol_theta} we have that $\Delta_{0k}(t)$ and $\Delta_{1k}(t)$ are $C^1$ in time, and $\mathcal{F}_t$-measurable in $\omega$. By the chain rule for weakly differentiable functions, if $F(t)$ is absolutely continuous with weak derivative $f(t)$, then
		\begin{equation}\label{eq:absolute_val_abs_cont}
		\di | F(t) | = \mathrm{sign}(F(t)) f(t) \dt.
		\end{equation}	
		Thus we have that $|\Delta_{0k}(t) - \Delta_{1k}(t) |$ is absolutely continuous in time (for each $\omega$), and it is clear from the form of \eqref{eq:absolute_val_abs_cont} that the weak derivative is jointly measurable in $(t, \omega)$ and $\mathcal{F}_t$-adapted. Hence the same is true for $\Delta^{\star}_{1,k}(t)$.
		
		Now noting that $\Delta^{\star}_{i,k}(t) = \Delta^{\star}_{i-1,k}(t) \vee \Delta_{ik}(t)$ for all $2 \le i \le n$, as before we can write
		\begin{equation*}
		\Delta^{\star}_{i,k}(t) = \frac{1}{2} \big( \Delta^{\star}_{i-1,k}(t) + \Delta_{ik}(t) + |\Delta^{\star}_{i-1,k}(t) - \Delta_{ik}(t) |   \big),
		\end{equation*}
		and by induction it follows that $\Delta^{\star}_{n,k}(t) = \max_{i \in \mathbf{N}} \Delta_{ik}$ has absolutely continuous paths.
		
		Since the argument above is independent of our choice of $k$, we have that $\Delta^{\star}_{n,k}(t)$ is absolutely continuous for all $k \in \mathbf{N}$. Now all we have to do is take the maximum of $\Delta^{\star}_{n,k}(t)$ over all $k \in \mathbf{N}$ and prove it is also absolutely continuous. Consider the processes $\Delta^{\star}_k(t) = \Delta^{\star}_{n,0} \vee \Delta^{\star}_{n,1} \vee \dots \vee \Delta^{\star}_{n,k}$ for $k \in \mathbf{N}$. Proceeding by induction exactly as above, by exploiting the absolute continuity of the processes $\Delta^{\star}_{n,k}$, we finally obtain that the process $\Delta^{\star}_n(t) = \max_{k \in \mathbf{N}} \Delta^{\star}_{n,k}$ is measurable in $(t, \omega)$ and absolutely continuous in time. Noting that $\Delta^{\star}_n(t) = \Delta_{\infty}(t)$, we are done.
	\end{proof}
	
	\begin{lemma}\label{lemma:measurable_selection}
		Assume $\mu, \nu \in \mathring{\mathcal{S}}^n$. There exists a $\{\mathcal{Y}_t\}$-predictable selection of indices $(t, \omega) \mapsto (i^{\star}(t, \omega), k^{\star}(t, \omega))$ such that
		\begin{equation*}
		\Delta_{\infty}(t, \omega) = \Delta_{i^{\star}(t, \omega) k^{\star}(t, \omega)}(t, \omega) \quad \text{for all $t, \omega$}.
		\end{equation*}
		Moreover, the dynamics of $\Delta_{\infty}(t, \omega)$ are given by
		\begin{align}\label{eq:evol_delta_infty}
		\di \Delta_{\infty}(t) &= 
		\sum_{i \in \mathbf{N}} \sum_{k \in \mathbf{N}} {\bf{1}}_{ \{(i^{\star}, k^{\star})(t) = (i,k) \} } \frac{\di}{\dt} \Delta_{ik}(t) \dt, \nonumber \\
		\Delta_{\infty}(0) &= \log \frac{\mu^{i^{\star}(0)}}{\mu^{k^{\star}(0)}} - \log \frac{\nu^{i^{\star}(0)}}{\nu^{k^{\star}(0)}}.
		\end{align}
	\end{lemma}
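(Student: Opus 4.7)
The plan is to exploit the finiteness of the index set $\mathbf{N} \times \mathbf{N}$ together with the pathwise $C^1$-regularity of each process $\Delta_{ik}$ established in Proposition~\ref{prop:evol_theta}, and to handle non-uniqueness of the argmax using the Dupuis--Ellis result (Lemma~\ref{lemma:dupuis}).

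First, for the measurable selection, I would impose a fixed total (lexicographic) order on $\mathbf{N} \times \mathbf{N}$ and define $(i^{\star}(t,\omega), k^{\star}(t,\omega))$ to be the lexicographically smallest pair $(i,k)$ with $\Delta_{ik}(t,\omega) = \Delta_{\infty}(t,\omega)$. Since $\pi_t$ and $\tilde{\pi}_t$ are continuous $\{\mathcal{Y}_t\}$-adapted processes on the open simplex (Lemma~\ref{lemma:non_divergence_to_boundary}), each $\Delta_{ik}$ is continuous and $\{\mathcal{Y}_t\}$-adapted, hence predictable; and by Lemma~\ref{lemma:delta_inf_measurable} so is $\Delta_{\infty}$. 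For each $(i,k)$, the event
\[
\bigl\{(i^{\star}, k^{\star})(t,\omega) = (i,k)\bigr\}
= \bigl\{\Delta_{ik}(t,\omega) = \Delta_{\infty}(t,\omega)\bigr\} \cap \bigcap_{(i',k') \prec (i,k)} \bigl\{\Delta_{i'k'}(t,\omega) < \Delta_{\infty}(t,\omega)\bigr\}
\]
is therefore a predictable set, which delivers the claimed $\{\mathcal{Y}_t\}$-predictability of the selection.

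Second, for the dynamics, Lemma~\ref{lemma:delta_inf_measurable} already guarantees that, pathwise, $\Delta_{\infty}(\cdot,\omega)$ is absolutely continuous, hence classically differentiable at Lebesgue-a.e.\ $t$. Writing $D(t,\omega)$ for its weak derivative, the identity to be established amounts to the pointwise equality
\[
D(t,\omega) = \tfrac{d}{dt}\Delta_{i^{\star}(t,\omega)\, k^{\star}(t,\omega)}(t,\omega)
\qquad \text{for Lebesgue-a.e.\ } t, \text{ almost surely}.
\]
At any such $t$, the selected pair $(i^{\star}, k^{\star})(t)$ lies in the argmax set, and since $\Delta_{\infty} \ge \Delta_{i^{\star} k^{\star}}$ with equality at $t$, a one-sided difference-quotient comparison gives $D(t) = \tfrac{d}{dt}\Delta_{i^{\star} k^{\star}}(t)$ whenever this common right and left derivative exists, i.e.\ whenever there is no conflict arising from other pairs attaining the maximum with different slopes.

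The main obstacle, and the role of Lemma~\ref{lemma:dupuis}, is therefore to control the time set on which ties in the argmax occur with unequal slopes. For any two pairs $(i,j) \neq (k,l)$, apply Lemma~\ref{lemma:dupuis} to the (absolutely continuous) difference $g := \Delta_{ij} - \Delta_{kl}$ at level $r = 0$: the set
\[
\bigl\{t : \Delta_{ij}(t) = \Delta_{kl}(t),\; \tfrac{d}{dt}\Delta_{ij}(t) \neq \tfrac{d}{dt}\Delta_{kl}(t)\bigr\}
\]
has Lebesgue measure zero. Taking the finite union over all pairs of index pairs, one obtains a null set outside of which all members of the argmax of $\Delta_{\infty}(t)$ share a common derivative, necessarily equal to $D(t)$. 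Consequently, on the complement of this null set (intersected with the full-measure set of differentiability points of $\Delta_{\infty}$), the weak derivative $D(t)$ coincides with $\tfrac{d}{dt}\Delta_{i^{\star} k^{\star}}(t)$ irrespective of the particular selection rule, and integrating yields \eqref{eq:evol_delta_infty}. The initial condition is immediate from $\Delta_{\infty}(0) = \max_{i,k}\bigl(\log(\mu^i/\mu^k) - \log(\nu^i/\nu^k)\bigr)$ and the definition of $(i^{\star}(0), k^{\star}(0))$.
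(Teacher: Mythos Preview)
Your proposal is correct and follows the same overall outline as the paper, but differs in two technical choices worth noting. For the measurable selection, you give an explicit lexicographic construction, which is entirely adequate here since the index set $\mathbf{N}\times\mathbf{N}$ is finite; the paper instead invokes Filippov's implicit function lemma, which is more abstract and would extend to uncountable index sets, but is heavier machinery than needed. For the dynamics, both you and the paper rely on Lemma~\ref{lemma:dupuis}, but applied to different objects: the paper applies it directly to $\Delta_{\infty}-\Delta_{ik}$ (which is absolutely continuous by Lemma~\ref{lemma:delta_inf_measurable}) at level $0$, concluding immediately that $g(s)=\tfrac{d}{dt}\Delta_{ik}(s)$ for a.e.\ $s$ in $\{\Delta_{\infty}=\Delta_{ik}\}$; you instead apply it to the pairwise differences $\Delta_{ij}-\Delta_{kl}$ to rule out ties with unequal slopes. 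In fact your own touching observation---that $\Delta_{\infty}\ge \Delta_{i^{\star}k^{\star}}$ with equality at $t$, together with a.e.\ differentiability of $\Delta_{\infty}$ from Lemma~\ref{lemma:delta_inf_measurable}, forces $D(t)=\tfrac{d}{dt}\Delta_{i^{\star}k^{\star}}(t)$---already suffices at every differentiability point of $\Delta_{\infty}$, so your subsequent tie-analysis via Dupuis--Ellis on $\Delta_{ij}-\Delta_{kl}$ is redundant (though not wrong). The paper's route avoids this detour by working with $\Delta_{\infty}-\Delta_{ik}$ from the outset.
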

	\begin{proof}
		Consider the measurable space $(M, \mathcal{M})$, where $M = ([0,\infty) \times \Omega)$ and $\mathcal{M}$ is the $\{\mathcal{Y}_t\}$-predictable $\sigma$-algebra. Let $U = \mathbf{N} \times \mathbf{N}$ endowed with the discrete topology. Consider the function $f: M \times U \rightarrow \R$ such that $f((t, \omega), (i,k)) = \Delta_{ik} (t, \omega)$. Note that $z(\cdot, (i,k)) = \Delta_{ik} (\cdot)$ is $\mathcal{M}$-measurable for all $(i,k) \in U$ by Proposition~\ref{prop:evol_theta}. Moreover, $z((t,\omega), \cdot) = \Delta_{\cdot} (t,\omega)$ is continuous as a function $U \rightarrow \R$ (because it is defined on the discrete space $U = \mathbf{N}\times \mathbf{N}$). The function $\Delta_{\infty} : M \rightarrow \R$ is $\mathcal{M}$-measurable by Lemma \ref{lemma:delta_inf_measurable}. Since $\Delta_{\infty} = \max_{(i,k) \in \mathbf{N} \times \mathbf{N}} \Delta_{ik}$, we must have that the image of $\Delta_{\infty}$ is contained in the image of $f$. In other words, we have
		\begin{equation*}
		\Delta_{\infty} (t,\omega) \in f((t,\omega), U) \quad \forall(t,\omega) \in M.
		\end{equation*}
		Then by Filippov's implicit function lemma (see e.g. \cite[Theorem~A.10.2]{cohen15}) there exists an $\mathcal{M}$-measurable (i.e.~$\{\mathcal{Y}_t\}$-predictable) map $u: M \rightarrow U$ that maps $(t,\omega) \mapsto (i^{\star}(t,\omega), k^{\star}(t,\omega))$ such that
		\begin{equation*}
		\Delta_{\infty}(t, \omega) = f\big((t, \omega), u(t,\omega)\big) = f\big((t, \omega), (i^{\star}(t,\omega), k^{\star}(t,\omega))\big) =  \Delta_{i^{\star}(t, \omega) k^{\star}(t, \omega)}(t, \omega).
		\end{equation*}
		To prove the second part of the Lemma, recall that by Lemma \ref{lemma:delta_inf_measurable} we have that $\Delta_{\infty}(t, \omega)$ is absolutely continuous. Then $\di \Delta_{\infty}(t) = g(t) \dt$ for some density $g(t)$ such that $\int_0^t |g(s)|\ds < \infty$ a.s. for each $t \ge 0$. Since $\sum_{i \in \mathbf{N}} \sum_{k \in \mathbf{N}} {\bf{1}}_{ \{(i^{\star}, k^{\star})(t) = (i,k) \} } = 1$, we can write
		\begin{equation*}
		\Delta_{\infty}(t) = \Delta_{\infty}(0) + \int_0^t \sum_{i \in \mathbf{N}} \sum_{k \in \mathbf{N}} {\bf{1}}_{ \{(i^{\star}, k^{\star})(t) = (i,k) \} } g(s) \ds.
		\end{equation*}
		So, if we can show that for any $(i,k) \in \mathbf{N} \times \mathbf{N}$ and any $t > 0$ we have
		\begin{equation}\label{eq:lemma_eq_max_diff}
		\int_0^t {\bf{1}}_{ \{(i^{\star}, k^{\star})(t) = (i,k) \} } \Big| g(s) - \frac{\di}{\dt} \Delta_{ik}(s) \Big| \ds = 0 \quad \mathrm{a.s.},
		\end{equation}
		we are done. Rewriting the left-hand side of the above, we have
		\begin{align*}
		0 &\le \int_0^t {\bf{1}}_{ \{(i^{\star}, k^{\star})(t) = (i,k) \} } \Big| g(s) - \frac{\di}{\dt} \Delta_{ik}(s) \Big| \ds \\
		&\le
		\int_0^t {\bf{1}}_{ \{ \Delta_{\infty}(s) - \Delta_{ik}(s) = 0 \} } \Big| g(s) - \frac{\di}{\dt} \Delta_{ik}(s) \Big| \ds \\
		&= \int_0^t {\bf{1}}_{ \big\{ \Delta_{\infty}(s) - \Delta_{ik}(s) = 0, \: g(s) - \frac{\di}{\dt} \Delta_{ik}(s) \neq 0 \big\} } \Big| g(s) - \frac{\di}{\dt} \Delta_{ik}(s) \Big| \ds,
		\end{align*}
		and since the set $\big\{ s \, : \, \Delta_{\infty}(s) - \Delta_{ik}(s) = 0, \: g(s) - \frac{\di}{\dt} \Delta_{ik}(s) \neq 0 \big\}$ has measure $0$ by Lemma \ref{lemma:dupuis}, we see \eqref{eq:lemma_eq_max_diff} holds and the proof is complete.
	\end{proof}
	
	Finally, \eqref{eq:Delta_equal_H} gives us the chance to spell out the following lemmata, which will be useful later.
	\begin{lemma}\label{lemma:argmax}
		Assume $\mu, \nu \in \mathring{\mathcal{S}}^n$. For all $t < \infty$, the indices $i^{\star}(t,\omega)$ and $k^{\star}(t,\omega)$ respectively maximize and minimize the quantity $\frac{\pi_t^j}{\tilde \pi_t^j}$ over $j \in \mathbf{N}$. Moreover, we have that $ \frac{\pi_t^{i^{\star}}}{\tilde \pi_t^{i^{\star}}}  = :M_t \ge 1$ and $ \frac{\pi_t^{k^{\star}}}{\tilde \pi_t^{k^{\star}}}  = :\frac{1}{m_t} \le 1$ for all $t < \infty$.
	\end{lemma}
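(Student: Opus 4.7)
The plan is to break the lemma into two independent claims: (a) the selection $(i^\star,k^\star)$ from Lemma \ref{lemma:measurable_selection} must, in fact, pick an argmax and an argmin of $j \mapsto \pi_t^j/\tilde\pi_t^j$; and (b) the extrema of this ratio straddle $1$. Both facts should follow directly from the chain of identities in \eqref{eq:Delta_equal_H} and the fact that $\pi_t$ and $\tilde\pi_t$ are probability vectors. By Corollary \ref{cor:finiteness_of_theta_and_hilbert} (using $\mu,\nu \in \mathring{\mathcal{S}}^n$), all the logarithms and ratios appearing below are finite almost surely.

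For (a), I would start from the key rewriting
\[
\Delta_{i^\star k^\star}(t) \;=\; \log\frac{\pi_t^{i^\star}}{\tilde\pi_t^{i^\star}} \;-\; \log\frac{\pi_t^{k^\star}}{\tilde\pi_t^{k^\star}} \;\le\; \max_{i\in\mathbf{N}} \log\frac{\pi_t^{i}}{\tilde\pi_t^{i}} \;-\; \min_{k\in\mathbf{N}} \log\frac{\pi_t^{k}}{\tilde\pi_t^{k}} \;=\; \mathcal{H}(\pi_t,\tilde\pi_t),
\]
where the last equality is exactly \eqref{eq:Delta_equal_H}. On the other hand, Lemma \ref{lemma:measurable_selection} gives $\Delta_{i^\star k^\star}(t)=\Delta_\infty(t)=\mathcal{H}(\pi_t,\tilde\pi_t)$. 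Thus the displayed inequality is an equality, which forces both component inequalities to be saturated: $i^\star(t,\omega)$ achieves the maximum of $\log(\pi_t^j/\tilde\pi_t^j)$ and $k^\star(t,\omega)$ achieves the minimum. Since $\log$ is strictly increasing, $i^\star$ (resp. $k^\star$) also maximizes (resp. minimizes) $\pi_t^j/\tilde\pi_t^j$.

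For (b), I would use only that $\pi_t,\tilde\pi_t\in\mathcal{S}^n$: writing $M_t:=\pi_t^{i^\star}/\tilde\pi_t^{i^\star}=\max_j \pi_t^j/\tilde\pi_t^j$,
\[
1 \;=\; \sum_{j\in\mathbf{N}} \pi_t^j \;=\; \sum_{j\in\mathbf{N}} \frac{\pi_t^j}{\tilde\pi_t^j}\,\tilde\pi_t^j \;\le\; M_t \sum_{j\in\mathbf{N}} \tilde\pi_t^j \;=\; M_t,
\]
so $M_t\ge 1$. Symmetrically, with $1/m_t:=\pi_t^{k^\star}/\tilde\pi_t^{k^\star}=\min_j \pi_t^j/\tilde\pi_t^j$, the same summation bounded from above by $(1/m_t)\sum_j \tilde\pi_t^j$ yields $1/m_t\le 1$.

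I do not anticipate a genuine obstacle here: the result is essentially a repackaging of \eqref{eq:Delta_equal_H} plus the probability-simplex normalization. The only point requiring a little care is ensuring all quantities are well defined pathwise, which is exactly what Corollary \ref{cor:finiteness_of_theta_and_hilbert} provides under the assumption $\mu,\nu\in\mathring{\mathcal{S}}^n$.
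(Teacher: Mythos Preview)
Your proof is correct and follows essentially the same approach as the paper's own proof: part (a) is identical, and for part (b) the paper uses a contradiction argument (if $M_t<1$ then $\sum_j\pi_t^j<1$) while you give the direct weighted-sum version of the same idea. One minor slip: in the sentence on $1/m_t$ you write ``bounded from above by $(1/m_t)\sum_j\tilde\pi_t^j$'', but since $1/m_t$ is the minimum of the ratios the summation is bounded from \emph{below} by this quantity, which is what gives $1\ge 1/m_t$.
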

	\begin{proof}
		Fix $(t, \omega) \in [0, \infty) \times \Omega$. Recalling Lemma \ref{lemma:measurable_selection}, we see from the definition of $\Delta_{ik}$ and \eqref{eq:Delta_equal_H} that
		\begin{align*}
		\log \frac{\pi_t^{i^{\star}(t, \omega)}}{ \tilde \pi_t^{i^{\star}(t, \omega)}}(\omega)  - \log  \frac{\pi_t^{k^{\star}(t, \omega)}}{\tilde\pi_t^{k^{\star}(t, \omega)}}(\omega)
		&= \Delta_{i^{\star}(t, \omega) k^{\star}(t, \omega)}(t, \omega)\\
		&= \Delta_{\infty}(t, \omega)
		= \log \max_{i \in \mathbf{N}} \frac{\pi_t^i}{\tilde \pi_t^i}(\omega)  - \log \min_{k \in \mathbf{N}} \frac{\pi_t^k}{\tilde\pi_t^k}(\omega),
		\end{align*}
		so the first part of the lemma follows. For the second part, assume for contradiction that there exists $\omega \in \Omega$ such that $M_t(\omega) < 1$. Then, for all $j \in \mathbf{N}$
		\begin{equation*}
		\frac{\pi^j_t}{\tilde \pi_t^j}(\omega) \le M_t(\omega) < 1 \Longrightarrow \pi^j_t < \tilde \pi_t^j,
		\end{equation*}
		which implies that $\sum_{j} \pi_t^j < 1$, and contradicts the fact that $\pi_t$ is a probability distribution. The argument for $1/m_t(\omega)$ is analogous.
	\end{proof}
	
	\begin{lemma}\label{lemma:diff_pi_pi_tilde_jk}
		Assume $\mu, \nu \in \mathring{\mathcal{S}}^n$. For all $i,k \in \mathbf{N} \times \mathbf{N}$, define $T_{ik}(t) := \frac{\pi_t^i}{\pi_t^k} - \frac{\tilde\pi_t^i}{\tilde\pi_t^k}$. For all $t < \infty$, we have that $T_{j i^{\star}}(t) \le 0$ and $T_{j k^{\star}}(t) \ge 0$, where $i^{\star} = i^{\star}(t,\omega)$ and $k^{\star} = k^{\star}(t,\omega)$ are the maximizing/minimizing indices from Lemma~\ref{lemma:argmax}.
	\end{lemma}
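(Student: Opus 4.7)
The plan is to reduce the two claimed inequalities to the maximizing/minimizing properties of $i^\star$ and $k^\star$ supplied by Lemma~\ref{lemma:argmax}. Fix $(t,\omega)$ and note that since $\pi_t, \tilde\pi_t \in \mathring{\mathcal{S}}^n$ (by Lemma~\ref{lemma:non_divergence_to_boundary} applied to both filters), all components involved are strictly positive, so we can freely clear denominators without flipping inequalities.

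For the first inequality, I would rewrite
\begin{equation*}
T_{j i^\star}(t) = \frac{\pi_t^j}{\pi_t^{i^\star}} - \frac{\tilde\pi_t^j}{\tilde\pi_t^{i^\star}} = \frac{\pi_t^j \tilde\pi_t^{i^\star} - \tilde\pi_t^j \pi_t^{i^\star}}{\pi_t^{i^\star}\tilde\pi_t^{i^\star}},
\end{equation*}
and observe that the denominator is strictly positive. Dividing the numerator by the positive quantity $\tilde\pi_t^j \tilde\pi_t^{i^\star}$ shows that $T_{j i^\star}(t) \le 0$ is equivalent to $\pi_t^j/\tilde\pi_t^j \le \pi_t^{i^\star}/\tilde\pi_t^{i^\star}$, which is exactly the defining maximizing property of $i^\star$ established in Lemma~\ref{lemma:argmax}.

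The second inequality is completely symmetric: the same manipulation gives that $T_{j k^\star}(t) \ge 0$ is equivalent to $\pi_t^j/\tilde\pi_t^j \ge \pi_t^{k^\star}/\tilde\pi_t^{k^\star}$, which is the minimizing property of $k^\star$. There is no genuine obstacle here — the lemma is essentially a bookkeeping restatement of Lemma~\ref{lemma:argmax} in the ratio-of-components variables $T_{ik}$ that will appear on the right-hand side of the evolution equation \eqref{eq:evol_diff_theta} for $\Delta_{ik}$; its role is to isolate the sign information needed later when one evaluates \eqref{eq:evol_diff_theta} at $(i,k) = (i^\star,k^\star)$ in the proof of Theorem~\ref{thm:contraction}.
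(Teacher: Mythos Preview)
Your argument is correct and essentially identical to the paper's: both clear denominators (the paper factors $T_{jk^\star}$ as $\big(\pi_t^j/\tilde\pi_t^j - \pi_t^{k^\star}/\tilde\pi_t^{k^\star}\big)\,\tilde\pi_t^j/\pi_t^{k^\star}$, you write it as a single fraction and divide through) to reduce the sign of $T_{ji^\star}$ and $T_{jk^\star}$ directly to the maximizing/minimizing property of $i^\star$ and $k^\star$ from Lemma~\ref{lemma:argmax}. There is no substantive difference.
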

	\begin{proof}
		Trivially, $T_{i^{\star}i^{\star}}(t) = T_{k^{\star}k^{\star}}(t) = 0$. Now consider $T_{j k^{\star}}(t)$ for $j \neq k^{\star}$. Note that
		\begin{gather*}
		T_{j k^{\star}}(t) 
		=  \frac{\pi_t^j}{\pi_t^{k^{\star}}} - \frac{\tilde\pi_t^j}{\tilde\pi_t^{k^{\star}}} 
		=  \bigg( \frac{\pi_t^j}{\tilde\pi_t^j} - \frac{\pi_t^{k^{\star}}}{\tilde\pi_t^{k^{\star}}} \bigg) \frac{\tilde\pi_t^j}{\pi_t^{k^{\star}}}, \quad \forall j \in \mathbf{N}, j \neq k^{\star}.
		\end{gather*}
		By Lemma \ref{lemma:argmax}, $k^{\star}$ minimizes $\frac{\pi_t^j}{\tilde \pi_t^j}$ over $j \in \mathbf{N}$, so we have that $ \frac{\pi_t^j}{\tilde\pi_t^j} - \frac{\pi_t^{k^{\star}}}{\tilde\pi_t^{k^{\star}}}  \ge 0$ for all $j \neq k^{\star}$. Moreover, $\tilde\pi_t^j / \pi_t^{k^{\star}} > 0$ as well, since $\pi_t$ and $\tilde\pi_t$ have positive entries for $t < \infty$. We conclude that $T_{j k^{\star}}(t) \ge 0$ for all $t < \infty$.
		
		For the case of $T_{j i^{\star}}(t)$ we argue in the same way by noting that
		\begin{gather*}
		T_{j i^{\star}}(t) 
		=  \frac{\pi_t^j}{\pi_t^{i^{\star}}} - \frac{\tilde\pi_t^j}{\tilde\pi_t^{i^{\star}}} 
		=  - \bigg( \frac{\pi_t^{i^{\star}}}{\tilde\pi_t^{i^{\star}}} - \frac{\pi_t^j}{\tilde\pi_t^j} \bigg) \frac{\tilde\pi_t^j}{\pi_t^{i^{\star}}}, \quad \forall j \in \mathbf{N}, j \neq i^{\star},
		\end{gather*}
		and using that $i^{\star}$ maximizes $\frac{\pi_t^j}{\tilde \pi_t^j}$.	
	\end{proof}
	
	\subsection{Proof of Theorem \ref{thm:contraction}}\label{sec:proof_contraction}
	
	We are now ready to prove Theorem \ref{thm:contraction}.
	
	\begin{proof}[Proof of Theorem~\ref{thm:contraction}]
		Let us start by considering \eqref{eq:evol_delta_infty}, and assuming $\mu, \nu \in \mathring{\mathcal{S}}^n$. Writing it out in full we have
		\begin{align} \label{eq:di_Delta_inf}
		\di \Delta_{\infty}(t)
		&= \sum_{i \in \mathbf{N}} \sum_{k \in \mathbf{N}} {\bf{1}}_{ \{(i^{\star}, k^{\star})(t) = (i,k) \} } \frac{\di}{\dt}  \bigg( \log  \frac{\pi_t^i}{\pi_t^k}   - \log  \frac{\tilde\pi_t^i}{\tilde\pi_t^k} \bigg) \dt \nonumber \\
		&= \bigg[ -\sum_{\substack{j = 0 \\ j\neq k^{\star}}}^{n} q_{j k^{\star}} \left( \frac{\pi_t^j}{\pi_t^{ k^{\star}}} - \frac{\tilde\pi_t^j}{\tilde\pi_t^{ k^{\star}}} \right)
		+ \sum_{\substack{j = 0 \\ j\neq i^{\star}}}^{n} q_{j i^{\star}} \left( \frac{\pi_t^j}{\pi_t^{i^{\star}}} - \frac{\tilde\pi_t^j}{\tilde\pi_t^{i^{\star}}} \right) \bigg] \dt,
		\end{align}
		where we have dropped the $(t, \omega)$-dependence of $(i^{\star},k^{\star})$ for readability.
		
		Rewriting \eqref{eq:di_Delta_inf} in the notation of Lemma~\ref{lemma:diff_pi_pi_tilde_jk}, we have
		\begin{align}\label{eq:di_Delta_sum}
		\di \Delta_{\infty} (t) &= -\bigg[ \sum_{\substack{j = 0 \\ j\neq k^{\star}}}^{n} q_{j k^{\star}} T_{j k^{\star}}(t)
		- \sum_{\substack{j = 0 \\ j\neq i^{\star}}}^{n} q_{j i^{\star}} T_{ji^{\star}}(t) \bigg] \dt \nonumber \\
		&= - \big( q_{i^{\star} k^{\star}} T_{i^{\star} k^{\star}}(t) -  q_{k^{\star} i^{\star}} T_{k^{\star} i^{\star}}(t) \big) \dt
		- \hspace{-8pt} \sum_{\substack{j = 0 \\ j\neq i^{\star},k^{\star}}}^{n} \hspace{-8pt} \bigg( q_{j k^{\star}} T_{j k^{\star}}(t) -  q_{j i^{\star}} T_{ji^{\star}}(t) \bigg) \dt.
		\end{align}
		and the right-hand side is non-positive, since the off-diagonal entries of the $Q$-matrix are non-negative by definition, and the differences $q_{i^{\star} k^{\star}} T_{i^{\star} k^{\star}}(t) -  q_{k^{\star} i^{\star}} T_{k^{\star} i^{\star}}(t)$ and $q_{j k^{\star}} T_{j k^{\star}}(t) -  q_{j i^{\star}} T_{ji^{\star}}(t)$ for $j \neq i^{\star}, k^{\star}$ are also all non-negative by Lemma~\ref{lemma:diff_pi_pi_tilde_jk}. We look for an upper bound on the Stieltjes measure $\di \Delta_{\infty}(t)$ on $[0, \infty)$. Dropping non-positive terms in the sum, we simplify to
		\begin{align*}
		\di \Delta_{\infty} (t) 
		&\le - \big( q_{i^{\star} k^{\star}} T_{i^{\star} k^{\star}}(t) -  q_{k^{\star} i^{\star}} T_{k^{\star} i^{\star}}(t) \big) \dt \\
		&= - \Bigg[ q_{i^{\star} k^{\star}}
		\bigg( \frac{\pi_t^{i^{\star}}}{\pi_t^{k^{\star}}} - \frac{\tilde\pi_t^{i^{\star}}}{\tilde\pi_t^{k^{\star}}} \bigg)
		+ q_{k^{\star} i^{\star}}
		\bigg( \frac{\tilde\pi_t^{k^{\star}}}{\tilde\pi_t^{i^{\star}}} - \frac{\pi_t^{k^{\star}}}{\pi_t^{i^{\star}}} \bigg) \Bigg] \dt \\
		&= - \Bigg[ q_{i^{\star} k^{\star}}
		\bigg( M_t - \frac{1}{m_t} \bigg) \frac{\tilde \pi_t^{i^{\star}}}{\pi_t^{k^{\star}}}
		+ q_{k^{\star} i^{\star}}
		\bigg( m_t - \frac{1}{M_t} \bigg)  \frac{ \pi_t^{k^{\star}}}{\tilde \pi_t^{i^{\star}}} \Bigg] \dt \\
		&\le - 2 \sqrt{q_{i^{\star} k^{\star}} q_{k^{\star} i^{\star}}} \bigg( \frac{M_t m_t - 1}{\sqrt{M_t m_t}} \bigg) \dt,
		\end{align*}
		where $M_t : = \frac{\pi_t^{i^{\star}}}{\tilde \pi_t^{i^{\star}}}$ and $m_t : = \frac{\tilde \pi_t^{k^{\star}}}{ \pi_t^{k^{\star}}}$ as in Lemma \ref{lemma:argmax}, and we have made use of the inequality $a + b \ge 2 \sqrt{ab}$ for $a,b \ge 0$. Recall that $\Delta_{\infty}(t) = \Delta_{i^{\star} k^{\star}} (t) = \log (M_t m_t) \ge 0$, since $M_t m_t \ge 1$ by Lemma~\ref{lemma:argmax}. Then we can rewrite the inequality above as
		\begin{align*}
		\di \Delta_{\infty} (t)
		&\le - 4 \sqrt{q_{i^{\star} k^{\star}} q_{k^{\star} i^{\star}}} \sinh \bigg(\frac{\Delta_{\infty}(t)}{2} \bigg)  \dt
		\end{align*}
		Now, if $\Delta_{\infty}(t) = 0$, then the theorem holds trivially, so we can assume $\Delta_{\infty}(t) >0$. Since $\sinh(x) >0 $ for $x >0$, we can divide both sides by $\sinh \big( \Delta_{\infty}(t)/2 \big)$. Integrating over $[s,t]$ yields
		\begin{equation*}
		\log \tanh \bigg( \frac{\Delta_{\infty}(t)}{4} \bigg) - \log \tanh \bigg( \frac{\Delta_{\infty}(s)}{4} \bigg) \le - \lambda (t-s),
		\end{equation*}
		where we have defined $\lambda := 2 \min_{i \neq j} \sqrt{q_{ij}q_{ji}}$, and it follows that
		\begin{equation*}
		\tanh \bigg( \frac{\Delta_{\infty}(t)}{4} \bigg) \le \tanh \bigg( \frac{\Delta_{\infty}(0)}{4} \bigg) e^{- \lambda t}.
		\end{equation*}
		
		Concavity and monotonicity of $\tanh (x)$ for $x \ge 0$ imply that  $\tanh(x)e^{-\lambda t} \leq \tanh(xe^{-\lambda t})$ and hence
		\begin{equation*}
		\Delta_{\infty}(t) \le \Delta_{\infty}(0) e^{-\lambda t},
		\end{equation*}
		and, recalling \eqref{eq:Delta_equal_H}, we are done.

		Finally, we lift the assumption that $\mu, \nu \in \mathring{\mathcal{S}}^n$. From Lemma \ref{lemma:hilbertsemicts}, we can choose sequences $\mu_m, \nu_m \in \mathring{\mathcal{S}}^n$ such that $\mathcal{H}(\mu_m, \nu_m)\to \mathcal{H}(\mu,\nu)$, and $(\mu_m,\nu_m)\to (\mu, \nu)$ in the Euclidean norm. Consider the corresponding filters $\pi_t(\mu_m), \tilde\pi_t(\nu_m)$ (the solutions to \eqref{eq:wonhamNorm} and \eqref{eq:wonhamWrong} initialized at $\mu_m$ and $\nu_m$ respectively).   As the Kushner--Stratonovich equations are Lipschitz on $\mathcal{S}^n$, by standard stability results for SDEs (e.g. \cite[Theorem~16.4.3]{cohen15}), we know that $\pi_t(\mu_m)\to \pi_t$ and $\tilde\pi_t(\nu_m) \to \tilde\pi_t$ in probability as $m\to \infty$. 

  We know from Lemma \ref{lemma:non_divergence_to_boundary2} that $\pi_t, \tilde\pi_t\in \mathring{\mathcal{S}}^n$. Using the continuity given in Lemma \ref{lemma:hilbertsemicts}, and applying the result above, we know that 
    \[\begin{split}\tanh\Big(\frac{\mathcal{H}(\pi_t, \tilde\pi_t)}{4}\Big) &=\lim_{m \to \infty}\Big[\tanh\Big(\frac{\mathcal{H}(\pi_t(\mu_m), \tilde\pi_t(\nu_m))}{4}\Big)\Big]\\&\leq \lim_{m \to \infty}\Big[\tanh\Big(\frac{\mathcal{H}(\mu_m, \nu_m)}{4}\Big)\Big]e^{- \lambda t} = \tanh\Big(\frac{\mathcal{H}(\mu, \nu)}{4}\Big)e^{- \lambda t}\end{split}\]
as desired. Monotonicity and concavity of $\tanh$ again complete the argument.
	\end{proof}

	\subsection{On the optimality of the contraction rate}\label{sec:optimality_rate}
	
	The deterministic contraction rate $\lambda = 2 \min_{i \neq j} \sqrt{q_{ij}q_{ji}}$ that we just proved is sharp for the case $\pi_t, \tilde \pi_t \in \mathcal{S}^1$ uniformly in $\mu, \nu \in \mathring{\mathcal{S}}^1$, in the sense that if we have $\rho \in \R$ s.t. $\Delta_{\infty}(t) \le \Delta_{\infty}(s) e^{- \rho (t-s)}$ $a.s.$ for all $s < t$, we know $\rho \le \lambda$. In this basic case, the maximum process is simply given by $\Delta_{\infty} = |\theta_0^1 - \tilde \theta_0^1|$. Consider the specific situation when $Q$ is symmetric, so that the diagonal entries are given by $q_{00} = q_{11} = -q$ and the off-diagonal entries by $q_{01} = q_{10} = q$. Then $\lambda = 2 q$. We compute
	\begin{align*}
	\di |\theta_0^1(t) - \tilde \theta_0^1(t)|
	&= \operatorname{sign} \big(\theta_0^1(t) - \tilde \theta_0^1(t)\big) \Big[- q \big(e^{\theta_0^1(t)} - e^{\tilde \theta_0^1(t)} + e^{- \tilde \theta_0^1(t)} - e^{-\theta_0^1(t)} \big)\Big] \dt \\
	&= \operatorname{sign} \big(\theta_0^1(t) - \tilde \theta_0^1(t)\big) \Big[- 2q \big[ (\theta_0^1(t) - \tilde \theta_0^1(t)) + \frac{1}{3!}((\theta_0^1(t))^3 - (\tilde \theta_0^1(t))^3) + \dots  \big] \Big] \dt \\
	&= - 2q |\theta_0^1(t) - \tilde \theta_0^1(t)| \Big( 1 + \frac{1}{3!}\big( (\theta_0^1(t))^2 + (\tilde \theta_0^1(t))^2 + \theta_0^1(t) \tilde \theta_0^1(t) \big) + \dots    \Big) \dt \\
	&= : - 2q |\theta_0^1(t) - \tilde \theta_0^1(t)| R_t \dt,
	\end{align*}
	from which we deduce
	\begin{equation}\label{eq:expression_1d_max_process}
	|\theta_0^1(t) - \tilde \theta_0^1(t)| = e^{- 2 q \int_s^t R_u \di u} |\theta_0^1(s) - \tilde \theta_0^1(s)|, \quad \text{ for all } 0 \le s \le t.
	\end{equation}
	Note that $R_t$ is close to 1 iff $\theta_0^1(t) \approx \tilde \theta_0^1(t) \approx 0$, which happens if $\pi_t$ and $\tilde \pi_t$ are near the centre of the simplex, i.e. $\pi_t \approx \tilde \pi_t \approx \big(\frac{1}{2}, \frac{1}{2} \big)$. 
	Let $\tau < \infty$ be a time such that $R_{\tau}\approx 1$. Note that such $\tau$ exists with positive probability, since the Brownian dynamics (under a change of measure) for $\theta_0^1(t)$ ensure that $\theta_0^1(t)$ must visit $0$ infinitely many times;~then by Theorem~\ref{thm:contraction}, for all large enough $\tau$, there exists $\varepsilon \ll 1$ such that $|\tilde \theta_0^1(\tau) - \theta_0^1(\tau) | \le \varepsilon$.
	
	Since $R_{t}$ is continuous in time, for all $\delta > 0$ there exists $t_{\delta}$ such that $R_s \in (1, 1 + \delta]$ for all $s \in [\tau, \tau + t_{\delta}]$. Then by \eqref{eq:expression_1d_max_process} for all $s \in [\tau, \tau + t_{\delta}]$,
	\begin{equation*}
	|\theta_0^1(\tau + \tau_{\delta}) - \tilde \theta_0^1(\tau + \tau_{\delta})| \ge e^{- 2 q (1 + \delta)t_{\delta}} \Big|\theta_0^1(\tau) - \tilde \theta_0^1(\tau) \Big|.
	\end{equation*}
	Since $\delta$ was arbitrary, we see that the bound $\lambda = 2q$ is achieved. We illustrate this in a simulated example in Figure~\ref{fig:hilber_error_1d}(left).
	
	\begin{figure}[H]
		\centering
		\includegraphics[width=0.49\textwidth]{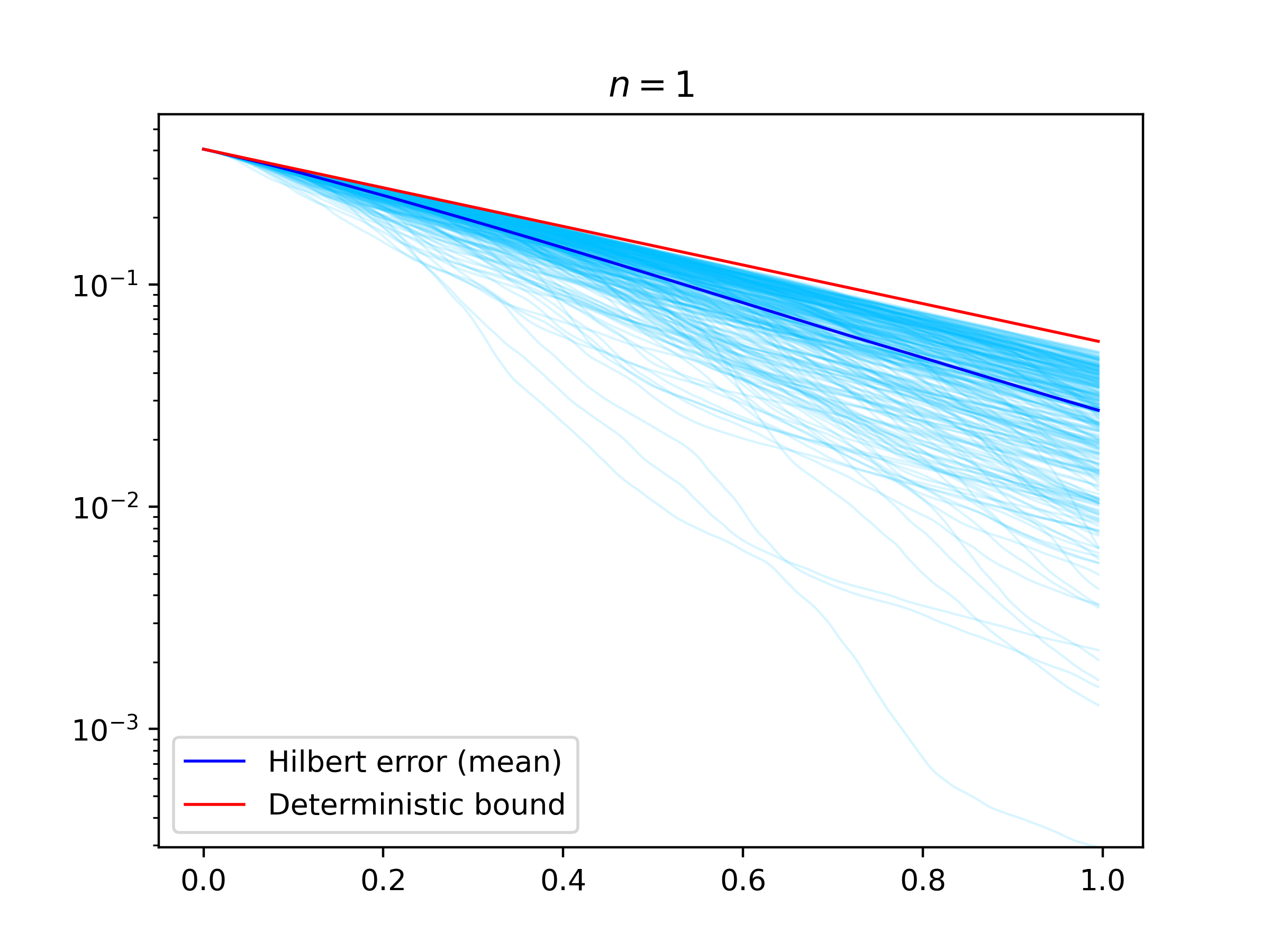}
		\includegraphics[width=0.49\textwidth]{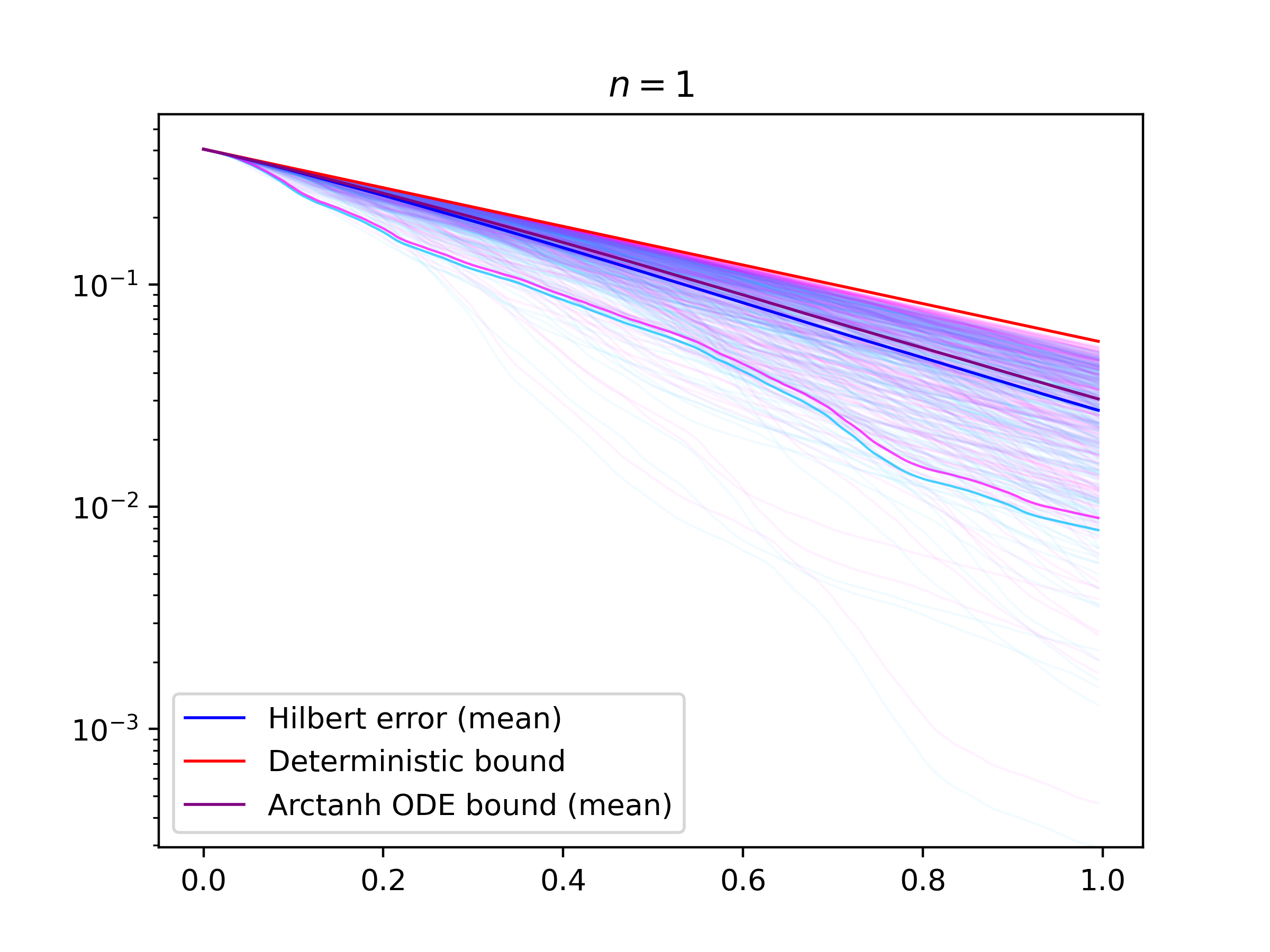}	
		\caption{On the left, in log scale, we plot 300 realizations of the Hilbert projective error $\hilbert (\pi_t, \tilde \pi_t)$ (in light blue), where $\pi_t, \tilde \pi_t \in \mathring{\mathcal{S}}^1 \subset \R^2$ are solutions to \eqref{eq:wonhamNorm} initialized at $\mu, \nu \in \mathring{\mathcal{S}}^1$ respectively. In blue we plot the sample mean, and in red we have the deterministic pathwise bound $\hilbert(\mu, \nu) e^{-\lambda t}$ from Theorem~\ref{thm:contraction}. We see that the bound is attained. For this simulation, the signal $X$ is a 2-state Markov chain with symmetric rate matrix $Q$ and jump rate $q=1$, initialized at its invariant distribution $\mu = \big( \frac{1}{2}, \frac{1}{2} \big)$. This gives $\pi_0 = \mu$ as the initial condition for the Wonham filter. The `wrong' filter $\tilde \pi_t$ is initialized at $\nu = \big(\frac{2}{5}, \frac{3}{5} \big)$. Fixing these parameters, and setting the sensor function to be $h = (-1, 1)$, we simulate 300 paths for the signal and the observation processes, compute the two filters for these paths, and plot their Hilbert error. On the right, we plot once more the realizations of the Hilbert error (very light blue), and add to the same plot the pathwise ODE bounds (fuchsia) given by solving numerically, for each realization of $\tilde \pi_t$, the ODE \eqref{eq:ode_tanh_stability} from Proposition~\ref{prop:pathwise_bound_stability_ode}. In purple we plot the mean of the ODE bounds;~out of 300 pairs of Hilbert error and ODE bound, we highlight one at random.
		}
		\label{fig:hilber_error_1d}
	\end{figure}
	
	On the other hand, as the dimension of the state-space increases, numerical experiments suggest that $\lambda$ becomes less optimal. Recalling the notation from the previous subsection and looking back at our proof, it is easy to pinpoint the cause of this sub-optimality to having discarded the negative sum of terms of the form $q_{j k^{\star}} T_{j k^{\star}} - q_{j i^{\star}} T_{j i^{\star}}$ on the right-hand side of equation \eqref{eq:di_Delta_sum}. In particular, there are $n-2$ such negative terms, and they can take values in $(0, \infty)$, which suggests that, as $n$ increases, the derivative of $\Delta_{\infty}$ becomes more negative (and potentially quite substantially so), and $\Delta_{\infty}$ should in fact tend to 0 faster than our bound indicates.
	
	Unfortunately, we have not been able to find a uniform bound from below of the form $K_{q} \Delta_{\infty}$ for $\sum_j (q_{j k^{\star}} T_{j k^{\star}} - q_{j i^{\star}} T_{j i^{\star}})$, where $K_q$ is some constant depending only on $Q$. However, assuming one can observe the path of the wrongly initialized filter $\tilde \pi_t$, we can provide a sharper, $\tilde \pi_t$-dependent bound for the decay rate.
	
	We will need the following classical result, which we include for completeness.
	
	\begin{lemma}[Comparison principle]\label{lemma:comparison}
		Let $X_t \in \R$ be an absolutely continuous process such that its almost everywhere derivative satisfies
		\begin{equation*}
		\di X_t \le \alpha(t, X_t) \dt, \qquad X_0 = x_0,
		\end{equation*}
		on $[0,\infty)$, where $x \mapsto \alpha(t,x)$ is locally Lipschitz continuous. Let $u_t$ be the unique solution (up to its first explosion time $T>0$) to the ODE
		\begin{equation}\label{eq:ode_u_lemma}
		\frac{\di u_t}{\dt} = \alpha(t, u_t), \qquad u_0 = x_0.
		\end{equation} 
		Then $X_t \le u_t$ for all $t < T$.
	\end{lemma}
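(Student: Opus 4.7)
This is the classical one-sided Gronwall/comparison argument, adapted to the setting of an absolutely continuous $X_t$ whose derivative is only an a.e.\ inequality. The plan is to fix an arbitrary terminal time $t_0 < T$, reduce to a bounded region where $\alpha$ enjoys a global Lipschitz constant, and then apply Gronwall's inequality either to the positive-part process $\phi(t) := (X_t - u_t)^+$ or, equivalently, via a contradiction at the ``first crossing time''. Either route works; I would present the positive-part version because it avoids talking about the trajectory's topology.

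Concretely, I would first fix $t_0 < T$ and define $D_t := X_t - u_t$. Because $X_t$ and $u_t$ are both continuous on $[0, t_0]$ their union of images is contained in a compact set $K \subset \mathbb{R}$, on which $x \mapsto \alpha(s, x)$ is uniformly Lipschitz with some constant $L = L(K, t_0)$. Since $X$ and $u$ are both absolutely continuous, so is $\phi(t) = \max(D_t, 0)$. A standard identity for absolutely continuous functions gives, for a.e.\ $t$,
\begin{equation*}
\phi'(t) = \mathbf{1}_{\{D_t > 0\}} D_t',
\end{equation*}
using that $D_t' = 0$ a.e.\ on the level set $\{D_t = 0\}$. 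On $\{D_t > 0\}$ we have
\begin{equation*}
D_t' \le \alpha(t, X_t) - \alpha(t, u_t) \le L |X_t - u_t| = L\, D_t = L\, \phi(t),
\end{equation*}
so $\phi'(t) \le L\, \phi(t)$ a.e.\ on $[0, t_0]$. Since $\phi$ is absolutely continuous with $\phi(0) = 0$, Gronwall's inequality (in the integral form, which is the one available for a.e.\ derivative bounds) then yields $\phi(t) \le 0 \cdot e^{Lt} = 0$ on $[0, t_0]$, that is $X_t \le u_t$ on $[0, t_0]$. Since $t_0 < T$ was arbitrary, this gives the claim.

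The only technical point worth flagging is the identity $\phi'(t) = \mathbf{1}_{\{D_t > 0\}} D_t'$ a.e., which rests on the classical fact that the weak derivative of an absolutely continuous function vanishes a.e.\ on its level sets (this is essentially the same ingredient as Lemma~\ref{lemma:dupuis} used earlier in the paper, applied with $r = 0$ to $D_t$). No other step is delicate: everything else is a direct application of Lipschitz continuity of $\alpha$ on a compact set and the standard integral form of Gronwall's inequality.
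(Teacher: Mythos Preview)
Your proposal is correct and follows essentially the same approach as the paper: both reduce to a compact region to obtain a Lipschitz constant for $\alpha$, consider the difference $X_t - u_t$, and close with Gr\"onwall's inequality. The only cosmetic difference is that the paper carries out the ``first crossing time'' contradiction variant (which you explicitly mention as an equivalent alternative), while you work directly with the positive part $(X_t - u_t)^+$; both routes rely on the same level-set fact (Lemma~\ref{lemma:dupuis}) and are interchangeable.
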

	\begin{proof}
		First of all, recall that standard results in ODE theory (see e.g.~\cite[Theorem~2.5]{teschl_odes}) give that \eqref{eq:ode_u_lemma} has a unique solution $u_t$ up to its first explosion time $T > 0$. For $t < T$, consider $H_t = X_t - u_t$. Note that $H_0 = 0$, and that $H_t$ is absolutely continuous with  a.e.~derivative satisfying
		\begin{equation*}
		\frac{\di H_t}{\dt} \le  \alpha(t, X_t) -  \alpha(t, u_t).
		\end{equation*} 
		Assume for a contradiction that there exists $\tau < T$ s.t.~$H_{\tau} > 0$. By continuity of $H_t$, there exists $t_0 \in [0, \tau)$ such that $H_{t_0} = 0$ and $H_{s} \ge 0$ for all $s \in [t_0, \tau]$. Moreover, by continuity of $X_t$ and $u_t$, there exists $R \in \R$ such that $X_s, u_s \in (-R, R)$ for all $s \in [t_0, \tau]$. Then
		\begin{align*}
		H_{\tau} &= \int_{t_0}^{\tau} \hspace{-3pt} \frac{\di H_s}{\ds} \ds 
		\le \int_{t_0}^{\tau} \hspace{-5pt} \big( \alpha(s, X_s) - \alpha(s, u_s) \big) \ds
		\le \int_{t_0}^{\tau} \hspace{-5pt} C_R(s) |X_s - u_s| \ds = \int_{t_0}^{\tau} \hspace{-5pt} C_R(s) H_s \ds,
		\end{align*}
		where $C_R(t) \ge 0$ is the Lipschitz constant of $\alpha(t, x_t)$ for $x_t \in (-R, R)$, and we have used that $H_s = X_s - u_s > 0$ on $[t_0, \tau]$ by assumption. Then Gr\"onwall's inequality yields that $H_{\tau} \le 0$, which is a contradiction. Therefore $X_t \le u_t$ for all $t < T$.
	\end{proof}

\begin{prop}\label{prop:pathwise_bound_stability_ode}
	Suppose $\mu^i, \nu^i > 0$ for all $i \in \mathbf{N}$.
	For all $t \ge 0$, let $u_t \in (0,1)$ be the unique solution to the ODE with random coefficients given by
	\begin{equation}\label{eq:ode_tanh_stability}
	\frac{\di u_t}{\dt} = - \tilde \lambda^{\star} (t, u_t) u_t,
	\qquad u_0 = \tanh \bigg( \frac{\hilbert( \mu, \nu)}{4}   \bigg),
	\end{equation}
	where
	\begin{equation}\label{eq:lambda_star_tilde}
	\tilde \lambda^{\star}(t, u_t)
	= \min_{i \neq k} \bigg\{\bigg( q_{ik} \frac{\tilde \pi_t^i}{\tilde \pi_t^k}
	+ \hspace{-5pt} \sum_{\substack{ j \neq i,k, \\ j \notin \mathcal{\tilde J}^i_k(t, u_t)}} \hspace{-5pt} q_{j k} \frac{\tilde \pi_t^j}{\tilde \pi_t^k}\bigg) \frac{1 + u_t}{1-u_t}
	+ \bigg( q_{ki} \frac{\tilde \pi_t^k}{\tilde \pi_t^i}
	+ \hspace{-5pt} \sum_{\substack{ j \neq i,k, \\ j \in \mathcal{\tilde J}^i_k(t, u_t)}} \hspace{-5pt} q_{j i} \frac{\tilde \pi_t^j}{\tilde \pi_t^i}\bigg) \frac{1 - u_t}{1+u_t} \bigg\},
	\end{equation}
	and
	\begin{equation*}
	\mathcal{\tilde J}^i_k(t, u_t) := \Big\{ j \in \mathbf{N} \, : \, \frac{q_{j k}}{\tilde \pi_t^k} \ge \frac{q_{j i}}{\tilde \pi_t^i} \Big( \frac{1-u_t}{1+u_t} \Big)^2 \Big\}.
	\end{equation*}
	Then for all $t < \infty$,
	\begin{equation}\label{eq:tanh_less_u_stability}
	\tanh \bigg( \frac{\hilbert (\pi_t, \tilde \pi_t)}{4} \bigg) \le u_t.
	\end{equation}
	
	In particular, $\tilde 
	\lambda^{\star}(t, u_t) \ge \tilde \lambda^{\star}_t$, where
	\begin{align}
	\tilde \lambda^{\star}_t 
	&:= 2\min_{i \neq k}
	\left\{ \min_{S \subseteq \mathbf{N}} \sqrt{  q_{ik}q_{k i}
		+  \hspace{-7pt} \sum_{j \in S,\, j \neq i,k}  \hspace{-7pt} q_{ik} q_{j i} \frac{\tilde \pi_t^j}{\tilde \pi_t^k}
		+ \hspace{-7pt} \sum_{l \notin S, \, l \neq i,k} \hspace{-7pt} q_{ki} q_{l k} \frac{\tilde \pi_t^l}{\tilde \pi_t^i}
		+  \hspace{-3pt}   \sum_{\subalign{\: j &\in S, \\ j &\neq i,k}} \sum_{\subalign{\: l &\notin S, \\ l &\neq i,k}}  \hspace{-3pt} q_{ji} q_{l k} \frac{\tilde \pi_t^j \tilde \pi_t^l}{\tilde \pi_t^i \tilde \pi_t^k} } \right\}, \nonumber \\
	&\ge 2\min_{i \neq k} \left\{ \sqrt{ q_{i k} q_{k i}
		+ \sum_{j \neq i, k} \min \bigg\{ \frac{ q_{j i} q_{i k}}{\tilde \pi_t^k}, \frac{q_{j k} q_{k i}}{\tilde \pi_t^i} \bigg\}  \tilde \pi_t^j } \right\} =: \tilde \lambda_t, \label{eq:pathwise_rate_hilbert}
	\end{align}
	which gives that for all $t < \infty$,
	\begin{equation}\label{eq:pathwise_ineq_tanh}
	\tanh \bigg( \frac{\hilbert (\pi_t, \tilde \pi_t)}{4} \bigg) \le \tanh \bigg( \frac{\hilbert (\mu, \nu)}{4} \bigg) e^{- \int_0^t \tilde \lambda^{\star}_s \ds}.
	\end{equation}
\end{prop}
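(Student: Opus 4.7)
The plan is to sharpen the derivation in the proof of Theorem~\ref{thm:contraction} by retaining, rather than discarding, the non-positive sum $\sum_{j\neq i^{\star}, k^{\star}}(q_{jk^{\star}}T_{jk^{\star}}-q_{ji^{\star}}T_{ji^{\star}})$ on the right-hand side of \eqref{eq:di_Delta_sum} and expressing what is retained in terms of $\tilde\pi_t$ and $\Delta_\infty(t)$ alone. I assume $\mu,\nu\in\mathring{\mathcal{S}}^n$ throughout; the boundary case can be recovered via Lemma~\ref{lemma:hilbertsemicts} and Lemma~\ref{lemma:non_divergence_to_boundary2} exactly as in the proof of Theorem~\ref{thm:contraction}.

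First I rewrite $T_{jk^{\star}} = (\rho^j m_t-1)\tilde\pi_t^j/\tilde\pi_t^{k^{\star}}$ and $T_{ji^{\star}} = (\rho^j/M_t - 1)\tilde\pi_t^j/\tilde\pi_t^{i^{\star}}$, where $\rho^j := \pi_t^j/\tilde\pi_t^j$. By Lemma~\ref{lemma:argmax}, $\rho^j \in [1/m_t, M_t]$ with $\rho^{i^{\star}}=M_t$ and $\rho^{k^{\star}}=1/m_t$. Each summand is linear in $\rho^j$ and non-negative on this interval (by Lemma~\ref{lemma:diff_pi_pi_tilde_jk}), so its infimum is attained at one of the two endpoints. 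Direct evaluation at $\rho^j = M_t$ and $\rho^j = 1/m_t$ yields the two candidate values $q_{jk^{\star}}(e^{\Delta_\infty}-1)\tilde\pi_t^j/\tilde\pi_t^{k^{\star}}$ and $q_{ji^{\star}}(1-e^{-\Delta_\infty})\tilde\pi_t^j/\tilde\pi_t^{i^{\star}}$. Comparing these and using $1-e^{-\Delta_\infty} = e^{-\Delta_\infty}(e^{\Delta_\infty}-1)$ together with $e^{-\Delta_\infty}=((1-u_t)/(1+u_t))^2$, the second is the smaller precisely when $j \in \mathcal{\tilde J}^{i^{\star}}_{k^{\star}}(t, u_t)$ with $u_t = \tanh(\Delta_\infty(t)/4)$. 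Combining this with the $j = i^{\star}, k^{\star}$ contribution (handled as in the proof of Theorem~\ref{thm:contraction}), I obtain
\begin{equation*}
\frac{d\Delta_\infty}{dt} \le -(e^{\Delta_\infty}-1)\bigg[q_{i^{\star}k^{\star}}\frac{\tilde\pi_t^{i^{\star}}}{\tilde\pi_t^{k^{\star}}} + \hspace{-8pt}\sum_{\substack{j \neq i^{\star},k^{\star}\\j\notin \mathcal{\tilde J}}}\hspace{-8pt} q_{jk^{\star}}\frac{\tilde\pi_t^j}{\tilde\pi_t^{k^{\star}}}\bigg] - (1-e^{-\Delta_\infty})\bigg[q_{k^{\star}i^{\star}}\frac{\tilde\pi_t^{k^{\star}}}{\tilde\pi_t^{i^{\star}}} + \hspace{-8pt}\sum_{\substack{j \neq i^{\star},k^{\star}\\j\in \mathcal{\tilde J}}}\hspace{-8pt}  q_{ji^{\star}}\frac{\tilde\pi_t^j}{\tilde\pi_t^{i^{\star}}}\bigg].
\end{equation*}

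Second, I pass to $v_t = \tanh(\Delta_\infty(t)/4)$ using $e^{\Delta_\infty}-1 = 4v/(1-v)^2$, $1 - e^{-\Delta_\infty}=4v/(1+v)^2$, and $dv_t/dt = ((1-v_t^2)/4) \, d\Delta_\infty/dt$. The factors $(1-v^2)/(1\mp v)^2$ collapse to $(1\pm v)/(1\mp v)$, so the inequality above becomes $dv_t/dt \le -\tilde\lambda^{\star}_{i^{\star}k^{\star}}(t,v_t)v_t$, where $\tilde\lambda^{\star}_{i k}(t,v)$ denotes the bracket in \eqref{eq:lambda_star_tilde} for fixed $(i,k)$. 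Since $v_t, \tilde\lambda^{\star}_{ik} \ge 0$, passing to the pair-minimum $\tilde\lambda^{\star}(t,v) = \min_{i\ne k}\tilde\lambda^{\star}_{ik}(t,v)$ only weakens the bound, giving $dv_t/dt \le -\tilde\lambda^{\star}(t,v_t)v_t$. Applying the comparison principle (Lemma~\ref{lemma:comparison}) to \eqref{eq:ode_tanh_stability} then yields \eqref{eq:tanh_less_u_stability}.

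Third, for the lower bounds on $\tilde\lambda^{\star}(t,u)$: the bracket has the form $a\alpha + b/\alpha$ with $\alpha = (1+u)/(1-u) > 0$, so $\tilde\lambda^{\star}_{ik}(t,u) \ge 2\sqrt{ab}$ by AM-GM. Expanding $ab$ with $a$ and $b$ the two parenthesised sums shows the value under the square root matches that in $\tilde\lambda^{\star}_t$ with $S = \mathcal{\tilde J}^i_k$; taking the minimum over all $S \subseteq \mathbf{N}$ gives the first inequality in \eqref{eq:pathwise_rate_hilbert}. The coarser bound $\tilde\lambda^{\star}_t \ge \tilde\lambda_t$ follows by, for each $j \neq i,k$, replacing the contribution of the cross terms involving $j$ by the smaller of the two "pure" values obtained from assigning $j$ to $S$ or to $S^c$. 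Finally, integrating $du_t/dt \le -\tilde\lambda^{\star}_t u_t$ produces \eqref{eq:pathwise_ineq_tanh}. The main obstacle is the endpoint case analysis in the first step — the switching condition between the two candidate minima must match exactly the definition of $\mathcal{\tilde J}^i_k(t,u_t)$; a minor secondary issue is well-posedness of \eqref{eq:ode_tanh_stability}, since $\tilde\lambda^{\star}(t,u)$ is only locally Lipschitz on $(0,1)$ and explodes as $u\to 1$, but positivity of $\tilde\lambda^{\star}$ makes $u_t$ non-increasing so it stays bounded away from $1$ by its initial value, and global existence and comparison both apply.
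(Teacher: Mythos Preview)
Your proposal is correct and follows essentially the same route as the paper's proof. The only notable difference is cosmetic: where the paper parametrises the $j$-term by $\Delta_{jk^{\star}}\in[0,\Delta_\infty]$ and argues monotonicity, you parametrise by $\rho^j=\pi_t^j/\tilde\pi_t^j\in[1/m_t,M_t]$ and exploit linearity to compare endpoint values directly---these are equivalent via $\rho^j m_t=e^{\Delta_{jk^{\star}}}$, and your version is arguably a touch cleaner. The subsequent steps (switching condition for $\mathcal{\tilde J}^i_k$, change of variable to $\tanh(\Delta_\infty/4)$, AM--GM for $\tilde\lambda^{\star}_t$, and the comparison lemma with the well-posedness argument via non-increase of $u_t$) all match the paper exactly.
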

\begin{proof}
	Recall the notation from the proof of Theorem~\ref{thm:contraction}. Consider \eqref{eq:di_Delta_sum} and apply the chain-rule to derive the dynamics of $\tanh (\Delta_{\infty}(t)/4)$, to yield
	\begin{align}
	\di \tanh \bigg( \frac{\Delta_{\infty}(t)}{4}   \bigg) 
	= &- \frac{1}{4}\cosh^{-2} \bigg( \frac{\Delta_{\infty}(t)}{4}  \bigg) \big( q_{i^{\star} k^{\star}} T_{i^{\star} k^{\star}}(t) -  q_{k^{\star} i^{\star}} T_{k^{\star} i^{\star}}(t) \big) \dt \nonumber \\
	&-\frac{1}{4} \cosh^{-2} \bigg( \frac{\Delta_{\infty}(t)}{4}  \bigg) \hspace{-8pt} \sum_{\substack{j = 0 \\ j\neq i^{\star},k^{\star}}}^{n} \hspace{-8pt} \bigg( q_{j k^{\star}} T_{j k^{\star}}(t) -  q_{j i^{\star}} T_{ji^{\star}}(t) \bigg) \dt. \label{eq:di_tanh_delta_infty}
	\end{align}
	We consider the terms on the right-hand side of the above equation one by one. Start from $T_{i^{\star} k^{\star}}(t)$ and notice that
	\begin{align*}
	\fontdimen14\scriptfont2=4pt
	\fontdimen17\scriptfont2=0pt
	T_{i^{\star} k^{\star}}(t)
	&= e^{\tilde \theta^{i^{\star}}_{k^{\star}}(t)} \big( e^{\theta^{i^{\star}}_{k^{\star}}(t) - \tilde \theta^{i^{\star}}_{k^{\star}}(t)} - 1 \big)
	= e^{\tilde \theta^{i^{\star}}_{k^{\star}}(t)} \big( e^{\Delta_{\infty}(t)} - 1 \big)
	= e^{\tilde \theta^{i^{\star}}_{k^{\star}}(t) + \frac{\Delta_{\infty}(t)}{2}} \big( e^{\frac{\Delta_{\infty}(t)}{2}} - e^{-\frac{\Delta_{\infty}(t)}{2}} \big) \\
	&= 2 e^{\tilde \theta^{i^{\star}}_{k^{\star}}(t) + \frac{\Delta_{\infty}(t)}{2}} \sinh \bigg( \frac{\Delta_{\infty}(t)}{2}   \bigg).
	\end{align*}
	Recalling the identity $\sinh (2x) = 2 \sinh(x) \cosh(x)$, we have that
	\begin{equation*}
	\frac{T_{i^{\star} k^{\star}}(t)}{\cosh^2 \Big( \frac{\Delta_{\infty}(t)}{4}  \Big)}
	=  4 e^{\tilde \theta^{i^{\star}}_{k^{\star}}(t) + \frac{\Delta_{\infty}(t)}{2}} \tanh \bigg( \frac{\Delta_{\infty}(t)}{4}  \bigg)
	= 4 \frac{\tilde \pi_t^{i^{\star}}}{\tilde \pi_t^{k^{\star}}} e^{\frac{\Delta_{\infty}(t)}{2}} \tanh \bigg( \frac{\Delta_{\infty}(t)}{4}  \bigg).
	\end{equation*}
	Similarly,
	\begin{equation*}
	-\frac{T_{k^{\star} i^{\star}}(t)}{\cosh^2 \Big( \frac{\Delta_{\infty}(t)}{4}  \Big)} =  4 e^{-\tilde \theta^{i^{\star}}_{k^{\star}}(t) - \frac{\Delta_{\infty}(t)}{2}} \tanh \bigg( \frac{\Delta_{\infty}(t)}{4}  \bigg)
	= 4 \frac{\tilde \pi_t^{k^{\star}}}{\tilde \pi_t^{i^{\star}}} e^{ - \frac{\Delta_{\infty}(t)}{2}} \tanh \bigg( \frac{\Delta_{\infty}(t)}{4}  \bigg).
	\end{equation*}
	Now, for $j \neq i^{\star}, k^{\star}$, consider $q_{j k^{\star}} T_{j k^{\star}}(t) -  q_{j i^{\star}} T_{ji^{\star}}(t)$. Note that
	\begin{equation*}
	\Delta_{j k^{\star}}(t)
	= \theta^j_{k^{\star}} (t) - \tilde \theta^j_{k^{\star}} (t)
	= \big(\theta^{i^{\star}}_{k^{\star}} (t) - \tilde\theta^{i^{\star}}_{k^{\star}}  (t)\big) + \big(\theta^j_{i^{\star}} (t) - \tilde \theta^j_{i^{\star}} (t) \big)
	=\Delta_{\infty}(t) + \Delta_{j i^{\star}}.
	\end{equation*}
	By Lemma~\ref{lemma:diff_pi_pi_tilde_jk}, and recalling that $\log$ is increasing, we have that $\Delta_{j k^{\star}}(t) \ge 0$ and $\Delta_{j i^{\star}} \le 0$. Moreover, by definition of $\Delta_{\infty}(t)$, we have that $\Delta_{j k^{\star}}(t)\le \Delta_{\infty}(t)$ and $|\Delta_{j i^{\star}}(t)| \le \Delta_{\infty}(t)$. Then we can write
	\begin{align*}
	q_{j k^{\star}} T_{j k^{\star}}(t) -  q_{j i^{\star}} T_{ji^{\star}}(t)
	&= q_{j k^{\star}} e^{\tilde \theta^j_{k^{\star}}}(e^{\Delta_{j k^{\star}}(t)} - 1) + q_{j i^{\star}} e^{\tilde \theta^j_{i^{\star}}}(1 - e^{\Delta_{j k^{\star}}(t) - \Delta_{\infty}(t)}) \\
	&= \tilde \pi_t^j \bigg[ \frac{q_{j k^{\star}}}{\tilde \pi_t^{k^{\star}}} (e^{\Delta_{j k^{\star}}(t)} - 1) + \frac{q_{j i^{\star}}}{\tilde \pi_t^{i^{\star}}}(1 - e^{\Delta_{j k^{\star}}(t) - \Delta_{\infty}(t)}) \bigg],
	\end{align*}
	and in particular if $\frac{q_{j k^{\star}}}{\tilde \pi_t^{k^{\star}}} \ge \frac{q_{j i^{\star}}}{\tilde \pi_t^{i^{\star}}} e^{- \Delta_{\infty}(t)}$, then $q_{j k^{\star}} T_{j k^{\star}}(t) -  q_{j i^{\star}} T_{ji^{\star}}(t)$ is increasing in $\Delta_{j k^{\star}}(t)$; otherwise it is decreasing. Therefore
	\begin{align*}
	\frac{q_{j k^{\star}}}{\tilde \pi_t^{k^{\star}}} \ge \frac{q_{j i^{\star}}}{\tilde \pi_t^{i^{\star}}} e^{- \Delta_{\infty}(t)}
	&\Longrightarrow \min_{0 \le \Delta_{j k^{\star}}(t) \le \Delta_{\infty}(t)} \big\{q_{j k^{\star}} T_{j k^{\star}}(t) -  q_{j i^{\star}} T_{ji^{\star}}(t)\big\}
	= q_{j i^{\star}}  \frac{\tilde \pi_t^j}{\tilde \pi_t^{i^{\star}}}(1 - e^{ - \Delta_{\infty}(t)}), \\
	\frac{q_{j k^{\star}}}{\tilde \pi_t^{k^{\star}}} < \frac{q_{j i^{\star}}}{\tilde \pi_t^{i^{\star}}} e^{- \Delta_{\infty}(t)}
	&\Longrightarrow \min_{0 \le \Delta_{j k^{\star}}(t) \le \Delta_{\infty}(t)} \big\{q_{j k^{\star}} T_{j k^{\star}}(t) -  q_{j i^{\star}} T_{ji^{\star}}(t)\big\}
	= q_{j k^{\star}} \frac{\tilde \pi_t^j}{\tilde \pi_t^{k^{\star}}} (e^{\Delta_{\infty}(t)} - 1).
	\end{align*}
	For all $t < \infty$, and all $i,k \in \mathbf{N} \times \mathbf{N}$, let
	\begin{equation*}
	\tilde J^i_k(t, \Delta_{\infty}(t)) := \Big\{ j \in \mathbf{N} \, : \, \frac{q_{j k}}{\tilde \pi_t^{k}} \ge \frac{q_{j i}}{\tilde \pi_t^{i}} e^{- \Delta_{\infty}(t)} \, \mathrm{and} \, j \neq i, k \Big\},
	\end{equation*}
	and ${}^{\mathsf{c}}\!\tilde J^i_k (t, \Delta_{\infty}(t)) : = \mathbf{N} \setminus \big(\tilde J^i_k(t, \Delta_{\infty}(t)) \cup \{i,k\} \big)$. Putting all the above estimates together, we can bound (in the sense of Lebesgue--Stieltjes measures) the right-hand side of \eqref{eq:di_tanh_delta_infty} as
	\begin{align}
	\di \tanh \bigg( \frac{\Delta_{\infty}(t)}{4}   \bigg) 
	\le &- \bigg[ q_{i^{\star} k^{\star}} \frac{\tilde \pi_t^{i^{\star}}}{\tilde \pi_t^{k^{\star}}} e^{\frac{\Delta_{\infty}(t)}{2}}
	+ q_{ k^{\star}i^{\star}} \frac{\tilde \pi_t^{k^{\star}}}{\tilde \pi_t^{i^{\star}}} e^{ - \frac{\Delta_{\infty}(t)}{2}} \bigg] 
	\tanh \bigg( \frac{\Delta_{\infty}(t)}{4}  \bigg) \dt \nonumber \nonumber \\
	&- \bigg[ \sum_{j \in \tilde J^{i^{\star}}_{k^{\star}}(t,\Delta_{\infty}(t))} \hspace{-15pt} q_{j i^{\star}}  \frac{\tilde \pi_t^j}{\tilde \pi_t^{i^{\star}}} e^{ - \frac{\Delta_{\infty}(t)}{2}} \hspace{-3pt}
	+ \hspace{-15pt} \sum_{j \in {}^{\raisebox{0.6pt} {$\scriptscriptstyle \mathsf{c}$}}\!\tilde J^i_k (t, \Delta_{\infty}(t))} \hspace{-15pt}  q_{j k^{\star}} \frac{\tilde \pi_t^j}{\tilde \pi_t^{k^{\star}}} e^{ \frac{\Delta_{\infty}(t)}{2}} \bigg]  \tanh \bigg( \frac{\Delta_{\infty}(t)}{4} \bigg)  \dt \nonumber \\
	\le &- \tilde\lambda(t, \Delta_{\infty}(t)) \tanh \bigg( \frac{\Delta_{\infty}(t)}{4} \bigg)  \dt, \label{eq:di_tanh_bound_stability}
	\end{align}
	where we have defined
	\begin{align*}
	\tilde \lambda(t, \Delta_{\infty}(t))
	:= \min_{i \neq k} \bigg\{  q_{i k} \frac{\tilde \pi_t^i}{\tilde \pi_t^k} e^{\frac{\Delta_{\infty}(t)}{2}} \hspace{-5pt}
	+  q_{ k i} \frac{\tilde \pi_t^k}{\tilde \pi_t^i} e^{ - \frac{\Delta_{\infty}(t)}{2}}  \hspace{-5pt}
	+ \hspace{-16pt} \sum_{j \in \tilde J^i_k(t, \Delta_{\infty}(t))}\hspace{-16pt} q_{j i}  \frac{\tilde \pi_t^j}{\tilde \pi_t^i} e^{ - \frac{\Delta_{\infty}(t)}{2}} \hspace{-5pt}
	+ \hspace{-16pt} \sum_{j \in {}^{\raisebox{0.6pt} {$\scriptscriptstyle \mathsf{c}$}}\!\tilde J^i_k(t, \Delta_{\infty}(t))} \hspace{-16pt} q_{j k} \frac{\tilde \pi_t^j}{\tilde \pi_t^k} e^{ \frac{\Delta_{\infty}(t)}{2}} \bigg\}.
	\end{align*}
	
	Using the inequality $a + b \ge 2 \sqrt{ab}$ for $a,b \ge 0$, we have, $\forall (i,k) \in \mathbf{N} \times \mathbf{N}$, and $\forall t < \infty$,
	\begin{gather*}
	q_{i k} \frac{\tilde \pi_t^i}{\tilde \pi_t^k} e^{\frac{\Delta_{\infty}(t)}{2}} \hspace{-3pt}
	+  q_{ k i} \frac{\tilde \pi_t^k}{\tilde \pi_t^i} e^{ - \frac{\Delta_{\infty}(t)}{2}} \hspace{-3pt}
	+ \hspace{-15pt} \sum_{j \in \tilde J^i_k(t, \Delta_{\infty}(t))}\hspace{-15pt} q_{j i}  \frac{\tilde \pi_t^j}{\tilde \pi_t^i} e^{ - \frac{\Delta_{\infty}(t)}{2}} \hspace{-3pt}
	+ \hspace{-15pt} \sum_{j \in {}^{\raisebox{0.6pt} {$\scriptscriptstyle \mathsf{c}$}}\!\tilde J^i_k(t, \Delta_{\infty}(t))} \hspace{-15pt} q_{j k} \frac{\tilde \pi_t^j}{\tilde \pi_t^k} e^{ \frac{\Delta_{\infty}(t)}{2}} \\
	\ge 2 \Bigg[
	\frac{e^{\frac{\Delta_{\infty}(t)}{2}}}{{\tilde \pi_t^k} } \Bigg( q_{i k} \tilde \pi_t^i
	+ \hspace{-8pt} \sum_{j \in {}^{\raisebox{0.6pt} {$\scriptscriptstyle \mathsf{c}$}}\!\tilde J^i_k(t, \Delta_{\infty}(t))} \hspace{-15pt} q_{j k} \tilde \pi_t^j \Bigg) 
	\Bigg]^{\frac{1}{2}}
	\Bigg[
	\frac{e^{\frac{-\Delta_{\infty}(t)}{2}}}{{\tilde \pi_t^i} } \Bigg( q_{k i} \tilde \pi_t^k
	+ \hspace{-8pt} \sum_{j \in \tilde J^i_k(t, \Delta_{\infty}(t))} \hspace{-15pt} q_{j i} \tilde \pi_t^j \Bigg) 
	\Bigg]^{\frac{1}{2}} \\
	\ge 2 \left(		q_{ik}q_{k i}
	+ \hspace{-8pt} \sum_{j \in \tilde J^i_k(t, \Delta_{\infty}(t))} \hspace{-15pt} q_{ik} q_{j i} \frac{\tilde \pi_t^j}{\tilde \pi_t^k}
	+ \hspace{-8pt} \sum_{l \in {}^{\raisebox{0.6pt} {$\scriptscriptstyle \mathsf{c}$}}\!\tilde J^i_k(t, \Delta_{\infty}(t))} \hspace{-15pt} q_{ki} q_{l k} \frac{\tilde \pi_t^l}{\tilde \pi_t^i}
	+ \hspace{-8pt} \sum_{\subalign{j &\in \tilde J^i_k(t, \Delta_{\infty}(t)), \\ l &\in {}^{\raisebox{0.6pt} {$\scriptscriptstyle \mathsf{c}$}}\!\tilde J^j_k(t, \Delta_{\infty}(t)) }} \hspace{-10pt} q_{ji} q_{l k} \frac{\tilde \pi_t^j \tilde \pi_t^l}{\tilde \pi_t^i \tilde \pi_t^k}
	\right)^{\frac{1}{2}},
	\end{gather*}	
	which yields
	\begin{equation}\label{eq:tilde_lambda_star}
	\tilde\lambda(t, \Delta_{\infty}(t)) \ge
	2 \min_{i\neq k} \min_{S \subseteq \mathbf{N}}
	\left(		q_{ik}q_{k i}
	+  \hspace{-3pt}   \sum_{\subalign{\: j &\in S, \\ j &\neq i,k}} \hspace{-3pt}  q_{ik} q_{j i} \frac{\tilde \pi_t^j}{\tilde \pi_t^k}
	+  \hspace{-3pt} \sum_{\subalign{\: l &\notin S, \\ l &\neq i,k}} \hspace{-3pt} q_{ki} q_{l k} \frac{\tilde \pi_t^l}{\tilde \pi_t^i}
	+ \hspace{-3pt}   \sum_{\subalign{\: j &\in S, \\ j &\neq i,k}} \sum_{\subalign{\: l &\notin S, \\ l &\neq i,k}}  \hspace{-3pt} q_{ji} q_{l k} \frac{\tilde \pi_t^j \tilde \pi_t^l}{\tilde \pi_t^i \tilde \pi_t^k}
	\right)^{\frac{1}{2}}.
	\end{equation}
	Then, bounding the right-hand side of \eqref{eq:di_tanh_bound_stability} and applying a Gr\"onwall's argument (for absolutely continuous processes) yields \eqref{eq:pathwise_ineq_tanh}. The inequality \eqref{eq:pathwise_rate_hilbert} follows immediately by minimizing further the right-hand side of \eqref{eq:tilde_lambda_star}, and in particular
	\begin{equation}\label{eq:pathwise_rate_easy}
	\tilde\lambda(\tilde \pi_t, \Delta_{\infty}(t)) \ge \tilde \lambda_t := 2 \min_{i \neq k} \left\{ \sqrt{ q_{i k} q_{k i}
	+ \sum_{j \neq i, k} \min \bigg\{ \frac{ q_{j i} q_{i k}}{\tilde \pi_t^k}, \frac{q_{j k} q_{k i}}{\tilde \pi_t^i} \bigg\}  \tilde \pi_t^j } \right\} > 0,
	\end{equation}
	since by assumption $Q$ has strictly positive non-diagonal entries and $\tilde \pi_t \in \mathring{\mathcal{S}}^n$ for all $t < \infty$ by Lemma~\ref{lemma:non_divergence_to_boundary}.
	
	Now let $X_t : = \tanh (\Delta_{\infty}(t)/4)$. Then we can rewrite \eqref{eq:di_tanh_bound_stability} as
	\begin{equation*}
	\di X_t \le - \tilde \lambda^{\star} (t, X_t ) X_t \dt,
	\end{equation*}
	where
	\begin{align*}
	\tilde \lambda^{\star}(t, X_t)
	&:= \tilde \lambda(t, 4 \arctanh(X_t)) \\
	&\,= \min_{i \neq k} \bigg\{\bigg( q_{ik} \frac{\tilde \pi_t^i}{\tilde \pi_t^k}
	+ \hspace{-5pt} \sum_{j \in {}^{\raisebox{0.6pt} {$\scriptscriptstyle \mathsf{c}$}}\!\mathcal{\tilde J}^i_k(t, X_t)} \hspace{-5pt} q_{j k} \frac{\tilde \pi_t^j}{\tilde \pi_t^k}\bigg) \frac{1 + X_t}{1-X_t}
	+ \bigg( q_{ki} \frac{\tilde \pi_t^k}{\tilde \pi_t^i}
	+ \hspace{-5pt} \sum_{j \in \mathcal{\tilde J}^i_k(t, X_t)} \hspace{-5pt} q_{j k} \frac{\tilde \pi_t^j}{\tilde \pi_t^k}\bigg) \frac{1 - X_t}{1+X_t} \bigg\},
	\end{align*}
	and $\mathcal{\tilde J}^i_k(t, X_t) := \tilde J^i_k(t, 4\arctanh(X_t))$ and ${}^{\mathsf{c}}\!\mathcal{\tilde J}^i_k(t, X_t) = \mathbf{N}\setminus\big(\mathcal{\tilde J}^i_k(t, X_t\big) \cup \{i,k\})$. We want to compare $X_t$ to the solution to \eqref{eq:ode_tanh_stability}. Next, we argue that \eqref{eq:ode_tanh_stability} has a well-defined solution for all $t < \infty$. Then the proposition follows from a direct application of Lemma~\ref{lemma:comparison}.
	
	Let $u_t$ be a solution to \eqref{eq:ode_tanh_stability}. Note that $\hilbert (\mu, \nu) \in [0, \infty)$ since $\mu, \nu \in \mathring{\mathcal{S}}^n$ by assumption, so $u_0 \in [0,1)$. If $\hilbert (\mu, \nu) = 0$, then $u_0 = 0$, and $u_t = 0$ for all $t \ge 0$, so the proposition holds trivially. So from now on, assume $u_0 \in (0,1)$. Now, the coefficient $\tilde\lambda^{\star}(\tilde \pi_t, u_t)$ depends on the process $\tilde \pi_t$, which is fixed $\omega$-by-$\omega$. Observe that $\tilde\lambda^{\star}(\tilde \pi_t, u_t)$ blows up when $u_t \uparrow 1$.
	Since $u_0 < 1$, the explosion time $T$ such that $u_{T-} \hspace{-4pt}= 1$ is strictly positive. Recall that by Lemma~\eqref{lemma:non_divergence_to_boundary} $\tilde \pi_t \in \mathring{\mathcal{S}}^n$ for all $t < \infty$. Then on the interval $[0, T)$, $x \mapsto \tilde\lambda^{\star}(\tilde \pi_t, x)x$ is locally Lipschitz continuous (with Lipschitz constant dependent on $t, \omega$ and $x$) and standard results in ODE theory (see e.g.~\cite[Theorem~2.5]{teschl_odes}) give that \eqref{eq:ode_tanh_stability} has a unique solution $u_t$ in $[0,T)$. On the other hand, $\tilde\lambda^{\star}(\tilde \pi_t, u_t) \ge \tilde \lambda_t \ge 2 \min_{i \neq k}\sqrt{q_{ik}q_{ki}}$ is strictly positive for $u_t \in (-1, 1)$, so $- \tilde\lambda^{\star}(\tilde \pi_t, u_t) u_t$ is strictly negative for $u_t \in (0,1)$. Then $u_t \le u_0<1$ for all $t \ge 0$, so in fact the explosion time $T = \infty$ and \eqref{eq:ode_tanh_stability} has a unique solution for all $t\ge 0$. Moreover, $- \tilde\lambda^{\star}(\tilde \pi_t, u_t) u_t$ tends to $0$ as $u_t$ approaches 0, hence it readily follows that $u_t \in (0,u_0]$ for all $t \ge 0$.
\end{proof}

By symmetry, the bounds of Proposition~\ref{prop:pathwise_bound_stability_ode} can also be expressed in terms of the true filter $\pi_t$.
\begin{corollary}\label{cor:pathwise_bound_stability_true_filter}
	Suppose $\mu^i, \nu^i > 0$ for all $i \in \mathbf{N}$, and $\pi_t$ is observed. For all $t \ge 0$, let $u_t \in (0,1)$ be the unique solution to the ODE with random coefficients given by
	\begin{equation}\label{eq:ode_tanh_stability_true_filter}
	\frac{\di u_t}{\dt} = - \lambda^{\star} (t, u_t) u_t,
	\qquad u_0 = \tanh \bigg( \frac{\hilbert( \mu, \nu)}{4}   \bigg),
	\end{equation}
	where
	\begin{equation}\label{eq:lambda_star_true_filter}
	\lambda^{\star} (t, u_t)
	= \min_{i \neq k} \bigg\{\bigg( q_{ik} \frac{ \pi_t^i}{ \pi_t^k}
	+ \hspace{-5pt} \sum_{\substack{ j \neq i,k, \\ j \in \mathcal{J}^i_k(t, u_t)}} \hspace{-5pt} q_{j k} \frac{ \pi_t^j}{ \pi_t^k}\bigg) \frac{1 - u_t}{1+u_t}
	+ \bigg( q_{ki} \frac{ \pi_t^k}{ \pi_t^i}
	+ \hspace{-5pt} \sum_{\substack{ j \neq i,k, \\ j \in \mathcal{J}^i_k(t, u_t)}} \hspace{-5pt} q_{j i} \frac{ \pi_t^j}{ \pi_t^i}\bigg) \frac{1 + u_t}{1-u_t} \bigg\},
	\end{equation}
	and
	$
	\mathcal{J}^i_k(t, u_t) := \Big\{ j \in \mathbf{N} \, : \, \frac{q_{j k}}{ \pi_t^k} \le \frac{q_{j i}}{\pi_t^i} \Big( \frac{1+u_t}{1-u_t} \Big)^2 \Big\}
	$.
	Then for all $t < \infty$,
	\begin{equation*}
	\tanh \bigg( \frac{\hilbert (\pi_t, \tilde \pi_t)}{4} \bigg) \le u_t.
	\end{equation*}
	In particular, for all $t < \infty$,
	$
	\tanh \Big( \frac{\hilbert (\pi_t, \tilde \pi_t)}{4} \Big) \le \tanh \Big( \frac{\hilbert (\mu, \nu)}{4} \Big) e^{- \int_0^t \lambda^{\star}_s \ds},
	$
	where $\lambda^{\star}_t \ge \lambda_t$, with $\lambda^{\star}_t$ and $\lambda_t$ defined equivalently to $\tilde \lambda^{\star}_t$ and $\tilde \lambda_t$ in \eqref{eq:pathwise_rate_hilbert}, with $\pi_t$ in place of $\tilde \pi_t$.
\end{corollary}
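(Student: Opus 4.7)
The plan is to mimic the proof of Proposition~\ref{prop:pathwise_bound_stability_ode} essentially verbatim, reparameterising the difference quotients $T_{ik}(t)$ using the \emph{true} filter $\pi_t$ in place of $\tilde\pi_t$. The key identity is $\tilde\pi_t^j/\tilde\pi_t^k = (\pi_t^j/\pi_t^k)\,e^{-\Delta_{jk}(t)}$, and in particular $\tilde\pi_t^{i^{\star}}/\tilde\pi_t^{k^{\star}} = (\pi_t^{i^{\star}}/\pi_t^{k^{\star}})\,e^{-\Delta_{\infty}(t)}$. Substituting this into the defining expression of $T_{ik}$ yields
\[
T_{i^{\star}k^{\star}}(t) = \frac{\pi_t^{i^{\star}}}{\pi_t^{k^{\star}}}\bigl(1 - e^{-\Delta_{\infty}(t)}\bigr), \qquad T_{k^{\star}i^{\star}}(t) = \frac{\pi_t^{k^{\star}}}{\pi_t^{i^{\star}}}\bigl(1 - e^{\Delta_{\infty}(t)}\bigr),
\]
so that, using $(1-e^{\pm\Delta_{\infty}}) = \mp 2e^{\pm\Delta_{\infty}/2}\sinh(\Delta_{\infty}/2)$, the diagonal contribution to the derivative of $\tanh(\Delta_{\infty}/4)$ reduces (after the usual factor $[4\cosh^2(\Delta_{\infty}/4)]^{-1}$) to the two terms $q_{i^{\star}k^{\star}}(\pi_t^{i^{\star}}/\pi_t^{k^{\star}})\,(1-u_t)/(1+u_t)$ and $q_{k^{\star}i^{\star}}(\pi_t^{k^{\star}}/\pi_t^{i^{\star}})\,(1+u_t)/(1-u_t)$ multiplied by $u_t$. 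Note the \emph{swap} of $(1+u_t)/(1-u_t)$ and $(1-u_t)/(1+u_t)$ relative to Proposition~\ref{prop:pathwise_bound_stability_ode}; this is the entire reason for the change in sign of the exponent in the definition of $\mathcal{J}^i_k$ compared with $\tilde{\mathcal{J}}^i_k$.

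For the off-diagonal terms with $j \neq i^{\star},k^{\star}$ the same reparameterisation gives
\[
q_{jk^{\star}}T_{jk^{\star}}(t) - q_{ji^{\star}}T_{ji^{\star}}(t) = q_{jk^{\star}}\frac{\pi_t^j}{\pi_t^{k^{\star}}}\bigl(1-e^{-\Delta_{jk^{\star}}}\bigr) + q_{ji^{\star}}\frac{\pi_t^j}{\pi_t^{i^{\star}}}\bigl(e^{\Delta_{\infty}-\Delta_{jk^{\star}}} - 1\bigr),
\]
viewed as a smooth function of $\Delta_{jk^{\star}} \in [0, \Delta_{\infty}]$ (where we have used $\Delta_{ji^{\star}} = \Delta_{jk^{\star}} - \Delta_{\infty}$). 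Differentiating in $\Delta_{jk^{\star}}$ shows that this function is monotone decreasing precisely when $q_{jk^{\star}}/\pi_t^{k^{\star}} \le (q_{ji^{\star}}/\pi_t^{i^{\star}})\,e^{\Delta_{\infty}}$, which is exactly the condition $j \in \mathcal{J}^{i^{\star}}_{k^{\star}}(t,u_t)$ after substituting $e^{\Delta_{\infty}} = ((1+u_t)/(1-u_t))^2$. Hence its minimum on $[0,\Delta_{\infty}]$ is attained at $\Delta_{jk^{\star}} = \Delta_{\infty}$ for $j \in \mathcal{J}^{i^{\star}}_{k^{\star}}$ (giving $q_{jk^{\star}}(\pi_t^j/\pi_t^{k^{\star}})(1-e^{-\Delta_{\infty}})$) and at $\Delta_{jk^{\star}} = 0$ otherwise (giving $q_{ji^{\star}}(\pi_t^j/\pi_t^{i^{\star}})(e^{\Delta_{\infty}}-1)$). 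Collecting everything and using the identities $e^{-\Delta_{\infty}/2}\tanh(\Delta_{\infty}/4) = u_t(1-u_t)/(1+u_t)$ and $e^{\Delta_{\infty}/2}\tanh(\Delta_{\infty}/4) = u_t(1+u_t)/(1-u_t)$ produces the Stieltjes bound
\[
\di \tanh\!\Big(\tfrac{\Delta_{\infty}(t)}{4}\Big) \le -\lambda^{\star}(t, u_t)\,u_t\,\dt,
\]
with $\lambda^{\star}(t,u_t)$ exactly as in \eqref{eq:lambda_star_true_filter}.

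The argument then concludes just as in Proposition~\ref{prop:pathwise_bound_stability_ode}. Observability of $\pi_t$ makes $\lambda^{\star}(t,u_t)$ a $\{\mathcal{Y}_t\}$-predictable random coefficient; Lemma~\ref{lemma:non_divergence_to_boundary} (applied with $\pi_t$) gives $\pi_t \in \mathring{\mathcal{S}}^n$ for all $t < \infty$; the resulting ODE \eqref{eq:ode_tanh_stability_true_filter} therefore has a unique, non-exploding solution $u_t \in (0, u_0]$; and Lemma~\ref{lemma:comparison} yields $\tanh(\mathcal{H}(\pi_t,\tilde\pi_t)/4) \le u_t$. The lower bound $\lambda^{\star}(t,u_t) \ge \lambda_t$ follows by the same AM--GM argument $a + b \ge 2\sqrt{ab}$ applied to the two brackets of $\lambda^{\star}$; the exponential cancellation works identically because the swap $(1\pm u_t)/(1\mp u_t) \leftrightarrow (1\mp u_t)/(1\pm u_t)$ is symmetric under the geometric mean. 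The exponential-in-time consequence then follows from Gr\"onwall, exactly as in \eqref{eq:pathwise_ineq_tanh}. The only real obstacle is bookkeeping of the sign swap in the exponentials, which correctly reproduces the different inequality used to define $\mathcal{J}^i_k$ compared with $\tilde{\mathcal{J}}^i_k$; once that is handled, no new analytic ideas are required.
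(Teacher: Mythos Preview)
Your proposal is correct and follows exactly the route the paper intends: the paper's own proof consists of the single line ``Similar to the proof of Proposition~\ref{prop:pathwise_bound_stability_ode}'', and what you have written is precisely that similarity made explicit---reparameterising each $T_{jk}(t)$ through $\pi_t$ rather than $\tilde\pi_t$ via the identity $\tilde\pi_t^j/\tilde\pi_t^k=(\pi_t^j/\pi_t^k)e^{-\Delta_{jk}(t)}$, which produces the swap $e^{\pm\Delta_\infty/2}\leftrightarrow e^{\mp\Delta_\infty/2}$ and hence the reversed inequality defining $\mathcal{J}^i_k$ versus $\tilde{\mathcal J}^i_k$. Your monotonicity analysis of the off-diagonal terms, the AM--GM step for $\lambda^{\star}_t$, and the appeal to Lemma~\ref{lemma:comparison} all mirror the corresponding steps in Proposition~\ref{prop:pathwise_bound_stability_ode}; nothing additional is needed.
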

\begin{proof}
	Similar to the proof of Proposition~\ref{prop:pathwise_bound_stability_ode}.
\end{proof}

	\begin{figure}[H]
		\centering
		\includegraphics[width=0.49\textwidth]{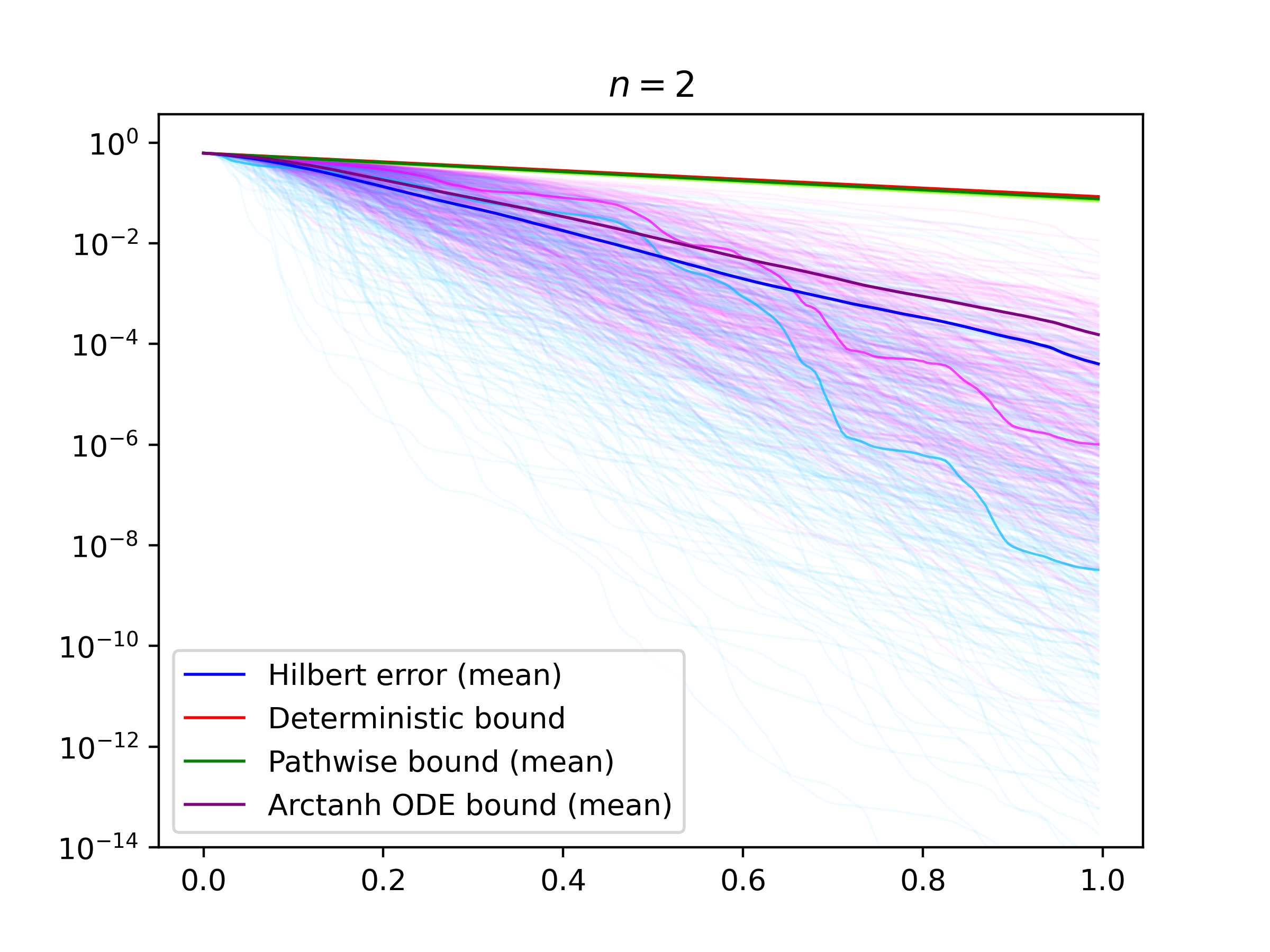}
		\includegraphics[width=0.49\textwidth]{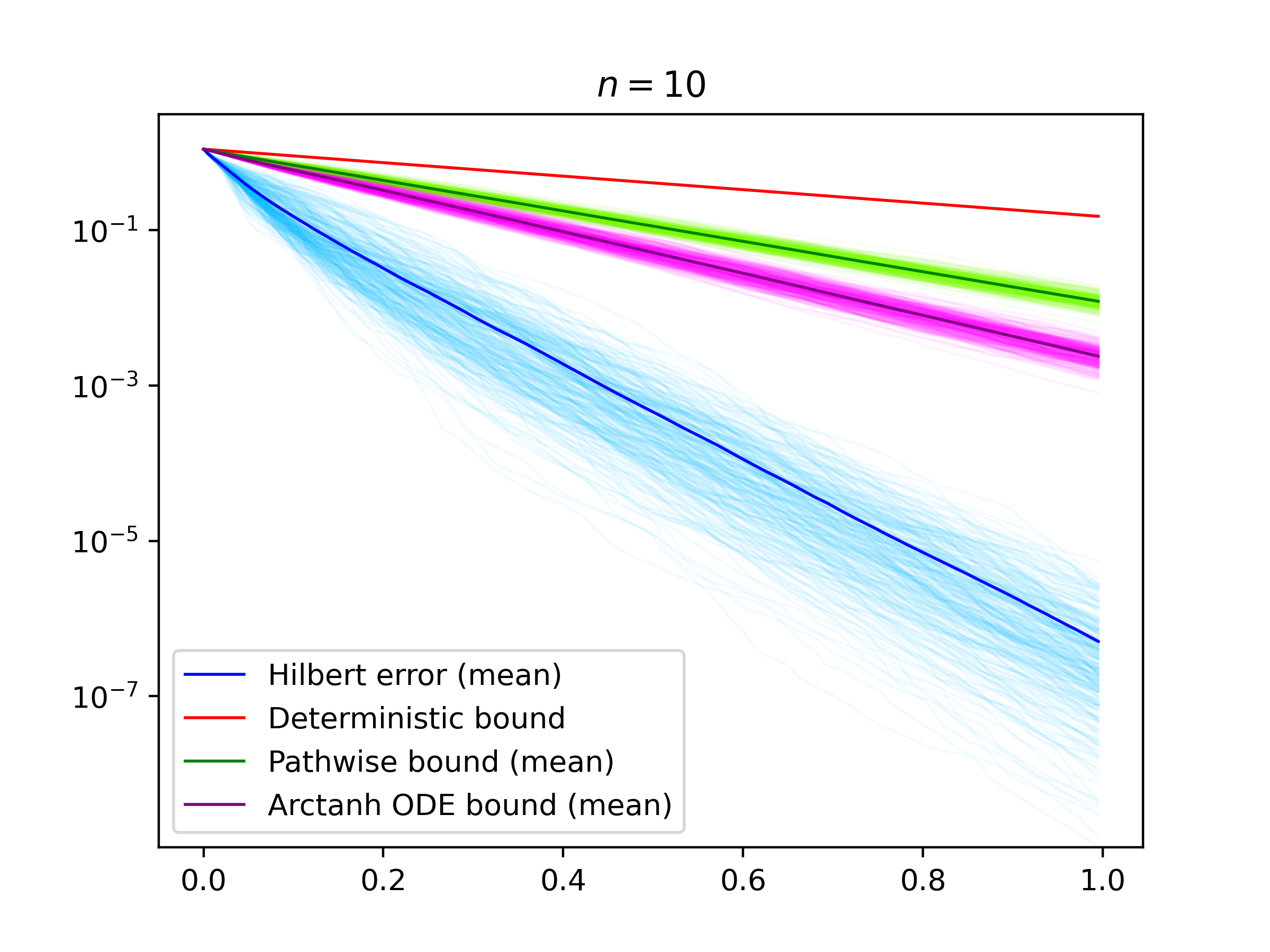}
		\includegraphics[width=0.49\textwidth]{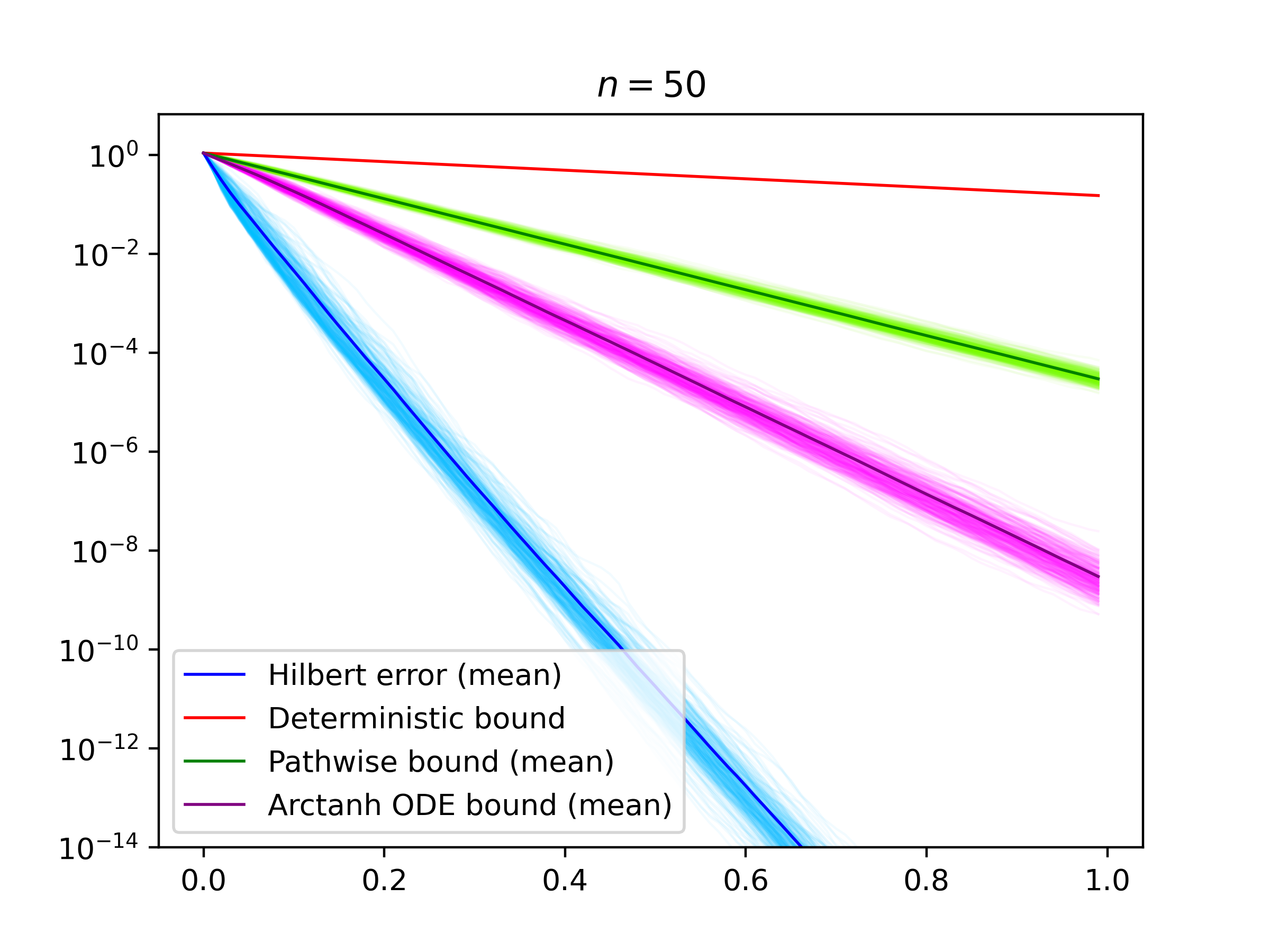}
		\includegraphics[width=0.49\textwidth]{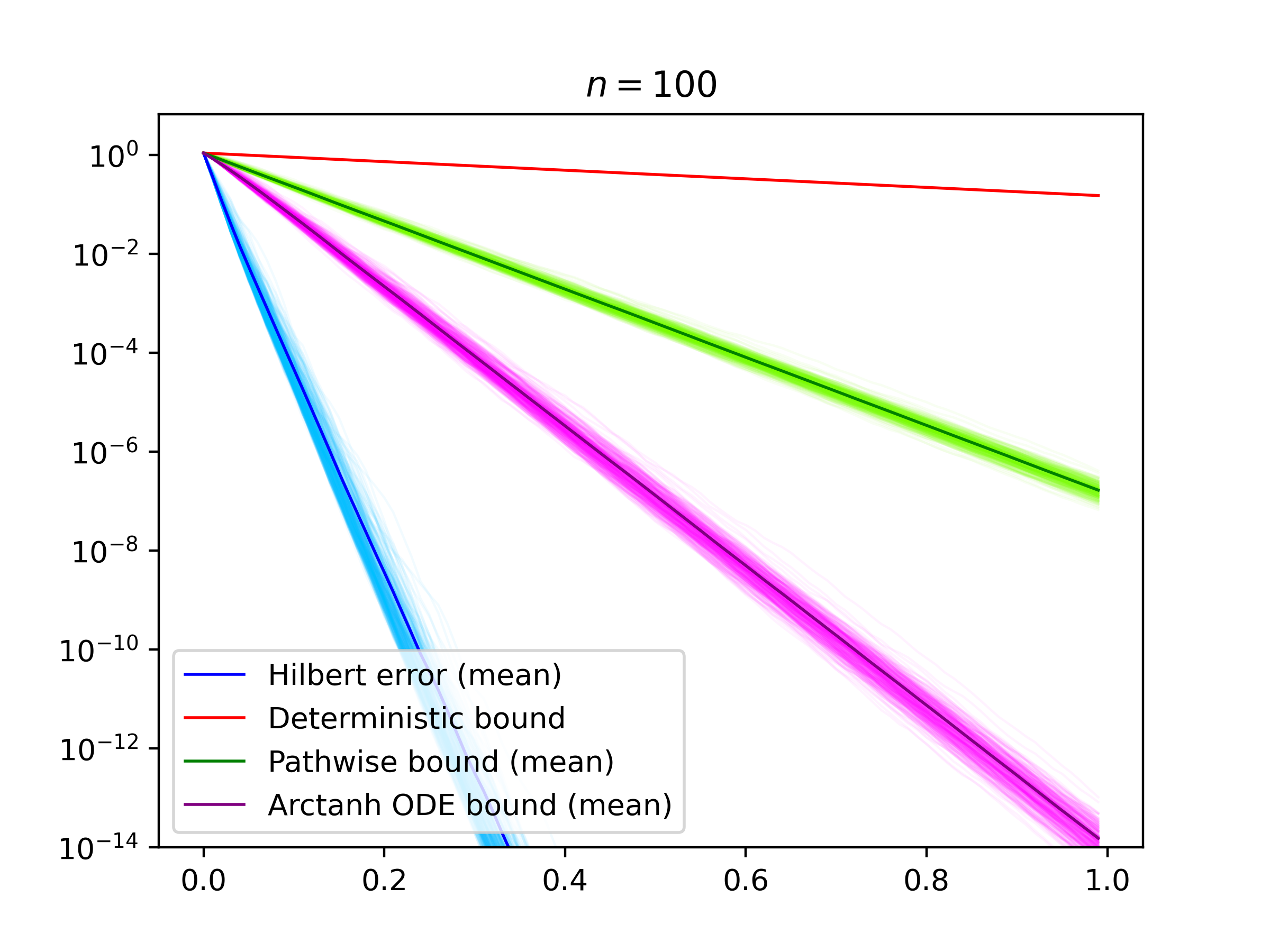}		
		\caption{For dimensions $n=2, 20, 50, 100$, and $\pi_t, \tilde \pi_t \in \mathring{\mathcal{S}}^n$, initialized at $\mu, \nu \in \mathring{\mathcal{S}}^{n}$ respectively, we plot 300 realizations of the Hilbert projective error $\hilbert (\pi_t, \tilde \pi_t)$ (light blue), of the pathwise bound $\hilbert (\mu, \nu) e^{- \int_0^t \tilde \lambda_s \ds}$ (light green), and of the ODE bound from Proposition~\ref{prop:pathwise_bound_stability_ode} (fuchsia), all in log scale, for $t \in [0,1]$. In blue, darker green and purple we have the respective sample means. In red we plot the deterministic bound $\hilbert(\mu, \nu) e^{-\lambda t}$ from Theorem~\ref{thm:contraction}. In the case of $n=2$, we highlight one realization of the Hilbert error (selected randomly), and its corresponding ODE bound. For this simulation, we keep the structure of the rate matrix fixed across all dimensions, so that the deterministic rate $\lambda$ is also fixed, and it is equal to 2 throughout. We see that as the dimension increases, the deterministic contraction rate becomes less and less optimal, and that the improvement gained by computing the pathwise rate or the ODE bound instead is significant (although the bound still remains far from sharp). For further details about this simulation, see Appendix~\ref{app:numerics}.}
		\label{fig:hilber_error_2d_100d}
	\end{figure}

	In \eqref{eq:pathwise_rate_hilbert} we state a lower bound $\tilde \lambda_t$ for $\tilde \lambda^{\star}_t$ because, numerically, finding $\tilde \lambda^{\star}_t$ by minimizing over all possible subsets of $\mathbf{N}$ for each $t$ can be costly, especially in high dimensions. On the other hand, $\tilde \lambda_t$ is easy to compute. We illustrate the performance of the bounds from Proposition~\ref{prop:pathwise_bound_stability_ode} in Figure~\ref{fig:hilber_error_2d_100d}: we plot both the pathwise bound $\hilbert (\mu, \nu) e^{- 2 \int_0^t \tilde \lambda_s \ds}$, which follows directly from \eqref{eq:pathwise_ineq_tanh}, using the rate $\tilde \lambda_t$, and the ODE bound given by $4 \arctanh(u_t)$, where $u_t$ is the numerical solution to \eqref{eq:ode_tanh_stability}, and compare them with the deterministic rate from Theorem~\ref{thm:contraction}.

	As we can see from the plots in Figure~\ref{fig:hilber_error_2d_100d}, even when using the pathwise contraction rate or the ODE bound from Proposition~\ref{prop:pathwise_bound_stability_ode}, our bounds are not tight in dimension $n \ge 2$. This affects our simulations for the error bounds in Section~\ref{sec:numerics_approx} as well. Since further algebraic manipulations in the spirit of what we have attempted so far do not seem likely to yield a better bound, one could think of improving our estimates by looking instead at the rate of decay of the expected Hilbert error, which from our simulations seems very well behaved, or even at the expected contraction rate. To proceed in either of these directions, one would need to find a way to estimate the expectation of the argmax and argmin of the ratios between the components of $\pi_t$ and $\tilde \pi_t$. 
 
 Our numerical experiments also suggest, at least for the examples we consider, that there is a concentration of measure phenomenon occurring in high-dimensional examples, where a much faster convergence rate than we have established will hold with overwhelming probability. We leave the study of this problem open for future research.

	\section{Robustness and error bounds}\label{sec:robustness}
	
	The contraction results of Section~\ref{sec:contraction} allow us to investigate the behaviour of the error when approximate filters, rather than the optimal filter, are employed.
	
	\subsection{Continuity of the Wonham filter with respect to the model parameters}
	
	In this section, we recover a version of Chigansky and Van Handel's results on robustness of the Wonham filter with respect to the model parameters (see \cite{chigansky07}). Note that our approach is entirely different from \cite{chigansky07}, and we obtain robustness in terms of the Hilbert error instead of the $\ell^1$-norm. Since the Hilbert metric is stronger than $\ell^1$ (see Lemma 1 in \cite{atar-zeitouni97}), the error estimates we obtain here are tighter then those in \cite{chigansky07}.
	
	Consider an approximate Wonham filter with incorrect model parameters
	\begin{equation}\label{eq:wonham_wrong_params}
	\di \tilde\pi_{t} = \tilde{Q}^{T} \tilde\pi_{t} \dt + \left( \tilde{H} - \tilde\pi_{t}^{\top}\tilde{h} \, \mathbb{I}_{n+1} \right) \tilde\pi_{t} \left( \di Y_{t} - \tilde\pi_{t}^{\top}\tilde{h} \dt \right), \quad \tilde\pi_0 = \nu,
	\end{equation}
	where $\tilde Q = (\tilde q_{ij})$ and $\tilde h$ are respectively a transition intensities matrix and a bounded sensor function different from $Q$ and $h$, and $\tilde H = \operatorname{diag}(\tilde h)$ the diagonal matrix with entries $(\tilde H)_{ii} = \tilde h^i$. We are interested in the Hilbert error $\hilbert (\pi_t, \tilde \pi_t)$.
	
	\begin{notation}
		Compared to previous sections, $(\tilde \pi_t)_{t \ge0}$ now denotes the solution to \eqref{eq:wonham_wrong_params}, while $(\pi_t)_{t \ge0}$ is still the solution to \eqref{eq:wonhamNorm}.
	\end{notation}
	
	\begin{rmk}
		Once more, we recall that all our results can be extended to the case of multidimensional observations ($d \neq 1$) and invertible $\sigma$.
		In particular, note that the arguments we present here allow easily for $\sigma \neq 1$ in \eqref{eq:observation} and \eqref{eq:wonhamNorm} (which would correspond to invertible $\sigma \neq \mathbb{I}_d$ in higher dimensions). This would add another `misspecified' parameter $\tilde \sigma$ to \eqref{eq:wonham_wrong_params}. In the proofs we present below, $\sigma$ and $\tilde \sigma$ can be directly incorporated into respectively $h$ and $\tilde h$ in the equations \eqref{eq:wonhamNorm} and \eqref{eq:wonham_wrong_params} for the `right' and `wrong' Wonham filter. This case cannot be treated in \cite{chigansky07} since the arguments therein require knowledge of the quadratic variation of the observations $Y$ (see also \cite[Remark~4.2]{chigansky07}).
	\end{rmk}
	
	The rest of this section is devoted to proving the following theorem.
	
	\begin{thm}[Model robustness]\label{thm:robustness_misspecified_model_params}
		Let $\pi_t$ be the solution to \eqref{eq:wonhamNorm} and $\tilde \pi_t$ the solution to \eqref{eq:wonham_wrong_params}. Assume $\mu^i, \nu^i > 0$ for all $i \in \mathbf{N}$ and $q_{ij}, \tilde q_{ij} > 0$ for all $i \neq j$. For all $t < \infty$,
		\begin{align*}
		\expect{\hilbert(\pi_t, \tilde \pi_t)}
		&\le \hilbert(\mu,\nu) e^{-\lambda t}
		+ K_q \int_0^t e^{-\lambda(t-s)} \expect{\frac{1}{\min_k \tilde \pi^k_s}} \ds + K_h \int_0^t e^{-\lambda(t-s)} \ds \nonumber \\
		&\quad + \frac{1}{4}\sum_{(i,k)} \sum_{(j,l) \neq (i,k)} \expect{\int_0^t e^{-\lambda(t-s)} \di L^{0}_s(\Delta_{ik}(\cdot) - \Delta_{jl}(\cdot))},
		\end{align*}
		where $\lambda = 2 \min_{i \neq j} \sqrt{q_{ij}q_{ji}}$, $K_q = 2 \max_{j,k} |\tilde q_{jk} - q_{jk}| $ and $K_h = 2 \max_j |h^j| \max_i |h^i - \tilde h^i| + \max_i |(h^i)^2 - (\tilde h^i)^2|$, and $L^0_t(\Delta_{ik}(\cdot) - \Delta_{jl}(\cdot))$ denotes the local time at 0 of the process $(\Delta_{ik}(t) - \Delta_{jl}(t))_{t \ge 0}$, where $\Delta_{ik}$ evolves according to \eqref{eq:delta_diff_misspecified} below for all $(i,k) \in \mathbf{N} \times \mathbf{N}$.
		
		Moreover, for all $t < \infty$, we have that the local time terms disappear as $\tilde h \rightarrow h$ for $\tilde h$ in a compact set around $h$, in the sense that there exists a constant $\tilde C < \infty$ such that
		\begin{equation*}
		\lim_{\tilde h\rightarrow  h} \expect{\hilbert(\pi_t, \tilde \pi_t)} \le \hilbert(\mu,\nu) e^{-\lambda t}
		+ K_q \tilde C (1 - e^{-  \lambda}).
		\end{equation*}
	\end{thm}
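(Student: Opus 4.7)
The plan is to follow the template of the proof of Theorem~\ref{thm:contraction}, adapted to accommodate the extra terms produced by the model misspecification. As a first step, I would derive the SDE for the pairwise difference process $\Delta_{ik}(t)=\log(\pi_t^i/\pi_t^k)-\log(\tilde\pi_t^i/\tilde\pi_t^k)$ by applying It\^o's formula to both summands and subtracting. In contrast to the stability setting of Proposition~\ref{prop:evol_theta}, the diffusion coefficients of the two logarithms no longer match (one uses $h$, the other $\tilde h$), so the Brownian terms no longer cancel. After the algebra I expect
\begin{equation*}
\di \Delta_{ik}(t) = \big[A_{ik}(t) + B_{ik}(t) + C_{ik}(t)\big]\dt + \big[(h^i-h^k)-(\tilde h^i-\tilde h^k)\big]\di B_t,
\end{equation*}
where $A_{ik}$ reproduces the pure-stability drift of Proposition~\ref{prop:evol_theta} (with $\tilde q$ replacing $q$ on the $\tilde\pi$-ratios and driving the contractive behaviour), $B_{ik}$ collects the terms generated by $\tilde Q-Q$ and can be uniformly bounded in terms of $K_q/\min_k\tilde\pi_t^k$, and $C_{ik}$ collects the drift contributions from $\tilde h-h$ and is uniformly bounded by a constant multiple of $K_h$.

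Next, I would extend the measurable-selection arguments of Lemmas~\ref{lemma:delta_inf_measurable}--\ref{lemma:measurable_selection} to select predictable maximizing indices $(i^*(t,\omega),k^*(t,\omega))$ for $\Delta_\infty(t) = \hilbert(\pi_t, \tilde\pi_t)$. Since $\Delta_{ik}$ is now a genuine semimartingale, $\Delta_\infty$ is no longer absolutely continuous; iterated application of the Tanaka--Meyer formula through $X\vee Y=(X+Y+|X-Y|)/2$ yields
\begin{equation*}
\di\Delta_\infty(t)=\sum_{(i,k)}\mathbf{1}_{(i^*,k^*)(t)=(i,k)}\di\Delta_{ik}(t)+\frac{1}{4}\sum_{(i,k)}\sum_{(j,l)\neq(i,k)}\di L_t^0(\Delta_{ik}(\cdot)-\Delta_{jl}(\cdot)),
\end{equation*}
where the $1/4$ absorbs the $1/2$ from Tanaka together with the symmetry $L^0(X-Y)=L^0(Y-X)$, which double-counts ordered pairs. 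On the first sum, the $A$-contribution admits the same chain of inequalities as in the proof of Theorem~\ref{thm:contraction}, leading (via $\sinh(x/2)\geq x/2$ to pass from the tanh bound of the original argument to a linear one) to an upper bound of $-\lambda\Delta_\infty(t)\dt$, while the $B$- and $C$-contributions give the $K_q/\min_k\tilde\pi_t^k$ and $K_h$ terms of the theorem statement. The Brownian integrand is bounded, so the stochastic integral is a true martingale and vanishes in expectation, and Gr\"onwall's inequality applied to $t\mapsto e^{\lambda t}\expect{\Delta_\infty(t)}$ then delivers the main estimate.

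For the limiting statement, $K_h\to 0$ as $\tilde h\to h$, and a uniform-in-$\tilde h$ bound on $\expect{1/\min_k\tilde\pi_s^k}$ for $\tilde h$ in a compact neighbourhood of $h$ (from a refinement of Lemma~\ref{lemma:non_divergence_to_boundary}) delivers the constant $\tilde C$. The main obstacle is showing that the local time terms vanish as $\tilde h\to h$. Here the natural approach is to use the Tanaka identity $\expect{L_t^0(Z)}=\expect{|Z_t|-|Z_0|}-\expect{\int_0^t\operatorname{sign}(Z_s)\di Z_s}$ applied to $Z=\Delta_{ik}-\Delta_{jl}$, combined with the occupation-times formula $\int_0^t\mathbf{1}_{|Z_s|\leq\epsilon}\di\langle Z\rangle_s=2\int_{-\epsilon}^{\epsilon}L_t^a\,\di a$: since $\langle Z\rangle_t = O(|h-\tilde h|^2)$, the local time at zero should vanish in the limit. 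Making this rigorous — in particular handling the uniform integrability of $|Z_t|$ across the family of perturbations and the case in which the limiting drift of $Z$ has zero crossings — is the most delicate step of the proof.
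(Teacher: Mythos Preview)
Your overall plan is close to the paper's, and you correctly anticipate every structural ingredient: the SDE for $\Delta_{ik}$ with a nonvanishing Brownian term, the add/subtract to isolate a contractive drift plus $K_q$- and $K_h$-sized errors, the Gr\"onwall step after killing the martingale in expectation, and the Tanaka-based argument for the vanishing of local times as $\tilde h\to h$. The paper even remarks, just before its proof, that an approach via iterated Tanaka ``is not unlikely'' to work, but then deliberately takes a different route.

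The key methodological difference is in how the dynamics of $\Delta_\infty$ are obtained. The paper does \emph{not} iterate Tanaka--Meyer. Instead it applies It\^o's formula to the smooth approximation $LSE_\alpha(\boldsymbol\Delta)(t)=\alpha^{-1}\log\sum_{(i,k)}e^{\alpha\Delta_{ik}(t)}$, computes the drift, diffusion, and second-order terms explicitly, and then sends $\alpha\to\infty$. The first-order terms converge (dominated convergence) to SoftArgMax-weighted averages over the argmax set $\mathcal I_r$; the second-order term is handled by a separate lemma (Proposition~\ref{prop:local_time_bound} in the appendix), which shows that
\[
\lim_{\alpha\to\infty}\int_s^t \alpha\sum_{i}\sum_{j\neq i}\frac{e^{\alpha(X^i_r+X^j_r)}}{(\sum_k e^{\alpha X^k_r})^2}\,(\sigma^i_r-\sigma^j_r)^2\,\di r \;\le\; \sum_i\sum_{j>i}\big(L_t^0(X^i-X^j)-L_s^0(X^i-X^j)\big),
\]
via the occupation-times formula and weak convergence of the kernel $g_\alpha(x)=\alpha e^{\alpha x}/(1+e^{\alpha x})^2$ to $\delta_0$. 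This yields only an \emph{inequality} with the sum of pairwise local times, which is all that is needed.

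This is where your proposal has a genuine gap. Iterated Tanaka--Meyer through $X\vee Y=\tfrac12(X+Y+|X-Y|)$ does not produce the clean decomposition you write. At each step you pick up the local time of a difference of the form $\Delta^\star_{m}-\Delta_{m+1}$, where $\Delta^\star_m$ is itself a running maximum and hence already carries local-time corrections in its own decomposition. The local times that appear are therefore of \emph{nested-max} differences, not of the pairwise differences $\Delta_{ik}-\Delta_{jl}$, and reducing the former to (an upper bound by) the latter is nontrivial. Your claimed identity with the $\tfrac14$ prefactor and a sum over all ordered pairs is not what the iteration delivers; even as an inequality it would need its own argument. The $LSE_\alpha$ route sidesteps this entirely, because the second derivative of $LSE_\alpha$ already factors as a sum of pairwise kernels, so the bound by pairwise local times falls out directly.

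Two smaller points. First, after the add/subtract the contractive drift in the paper uses the \emph{true} $q_{jk}$'s on the differences $T_{jk}=\pi^j/\pi^k-\tilde\pi^j/\tilde\pi^k$, with the $\tilde Q-Q$ discrepancy shunted into the error term $(\delta Q^\top\tilde\pi)^k/\tilde\pi^k-(\delta Q^\top\tilde\pi)^i/\tilde\pi^i$; this is what produces the rate $\lambda=2\min_{i\neq j}\sqrt{q_{ij}q_{ji}}$ rather than a $\tilde q$-based rate. Second, for the local-time limit the paper follows essentially your Tanaka-based plan, but the delicate step you flag (zero crossings of the limiting drift) is handled by the measure-zero Lemma~\ref{lemma:dupuis}: the set $\{s:(\Delta_{ik}-\Delta_{jl})_s^\star=0,\ b^{ik,jl}_s(h)\neq 0\}$ is Lebesgue-null, so the sign discontinuity there does not contribute to the integral.
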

	
	Even in the case where $\tilde h$ remains fixed, this result gives us good control over $\hilbert (\pi_t, \tilde \pi_t)$, as shown by the next Proposition.
	
	\begin{prop}\label{prop:finte_local_time}
		The error terms in Theorem~\ref{thm:robustness_misspecified_model_params} stay finite as $t \rightarrow \infty$. Specifically,
		\begin{equation*}
		\sup_{t \ge 0} \expect{ \int_0^t e^{- \lambda(t-s)} \di L^0_s(\Delta_{ik}(\cdot) - \Delta_{jl}(\cdot))} < \infty, \quad \forall (i,k), (j,l) \in \mathbf{N} \times \mathbf{N}, (i,k) \neq (j,l).
		\end{equation*}
	\end{prop}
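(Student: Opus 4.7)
Fix $(i,k)\neq(j,l)$ in $\mathbf{N}\times\mathbf{N}$ and set $X_s:=\Delta_{ik}(s)-\Delta_{jl}(s)$. The first step is to repeat the It\^o derivations underlying Proposition~\ref{prop:evol_theta}, but for the misspecified filter \eqref{eq:wonham_wrong_params}: substituting $\di Y_t = h^{\top}\pi_t\,\dt + \di B_t$ into the dynamics of $\tilde\pi_t$ and expanding, one obtains the semimartingale decomposition
\[
\di X_s = b_s\,\ds + \alpha\,\di B_s, \qquad \alpha := (h^i-\tilde h^i) - (h^k-\tilde h^k) - (h^j-\tilde h^j) + (h^l-\tilde h^l),
\]
where $\alpha$ is a deterministic constant and $b_s$ is a predictable drift built from $(\pi_s,\tilde\pi_s,Q,\tilde Q,h,\tilde h)$ along the lines of \eqref{eq:evol_diff_theta_fractions} with the extra cross terms caused by replacing the innovation of $\pi$ with that of $\tilde\pi$. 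When $\alpha=0$ the process $X$ has absolutely continuous paths, so $L^0_\cdot(X)\equiv 0$ and the bound is trivial; henceforth assume $\alpha\neq 0$.

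The second step is to combine Tanaka's formula $\di|X_s| = \operatorname{sgn}(X_s)\,\di X_s + \di L^0_s$ with It\^o's formula applied to the process $s\mapsto e^{-\lambda(t-s)}|X_s|$. Integrating from $0$ to $t$ and rearranging yields the identity
\begin{align*}
\int_0^t e^{-\lambda(t-s)}\,\di L^0_s &= |X_t| - e^{-\lambda t}|X_0| - \lambda\int_0^t e^{-\lambda(t-s)}|X_s|\,\ds \\
&\quad - \int_0^t e^{-\lambda(t-s)}\operatorname{sgn}(X_s)\,b_s\,\ds \,-\, \alpha\int_0^t e^{-\lambda(t-s)}\operatorname{sgn}(X_s)\,\di B_s.
\end{align*}
Since the integrand $e^{-\lambda(t-s)}\alpha\,\operatorname{sgn}(X_s)$ is uniformly bounded by $|\alpha|$, the stochastic integral is a true $L^2$-martingale; taking expectations, dropping the non-positive term $-\lambda\int e^{-\lambda(t-s)}\E[|X_s|]\,\ds$, and bounding $\E[\operatorname{sgn}(X_s)\,b_s]\geq -\E[|b_s|]$ yields
\[
\E\!\left[\int_0^t e^{-\lambda(t-s)}\,\di L^0_s\right] \;\leq\; \E[|X_t|] + \int_0^t e^{-\lambda(t-s)}\E[|b_s|]\,\ds \;\leq\; \E[|X_t|] + \frac{\sup_s \E[|b_s|]}{\lambda}.
\]
This reduces the proposition to establishing the two uniform-in-time moment bounds $\sup_{t\geq 0}\E[|X_t|]<\infty$ and $\sup_{s\geq 0}\E[|b_s|]<\infty$.

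Establishing these moment bounds is the main obstacle. Since $|X_t|\leq 2\,\hilbert(\pi_t,\tilde\pi_t)$, the first is a uniform-in-time bound on the expected Hilbert error, which can be obtained by first deriving an $\ell^1$-robustness estimate (in the spirit of \cite{chigansky07}) and then converting it to Hilbert form using uniform-in-time lower bounds on $\min_k \pi_s^k$ and $\min_k \tilde\pi_s^k$. The second bound is more delicate because $b_s$ contains ratios of the form $q_{j'k}\,\pi_s^{j'}/\pi_s^k$ and $\tilde q_{j'k}\,\tilde\pi_s^{j'}/\tilde\pi_s^k$, which blow up near the boundary of $\mathcal{S}^n$. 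The strict positivity assumption $q_{ij},\tilde q_{ij}>0$ for $i\neq j$ forces the filter coordinates away from the boundary on average (the quantitative content behind Lemma~\ref{lemma:non_divergence_to_boundary2}); turning this into uniform bounds on $\sup_s \E[1/\pi_s^k]$ and $\sup_s \E[1/\tilde\pi_s^k]$---for example via a Lyapunov analysis of a suitable negative power of the filter components under \eqref{eq:wonhamNorm} and \eqref{eq:wonham_wrong_params}, or a direct coupling-with-the-signal argument---is the technical heart of the proof.
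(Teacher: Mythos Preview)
Your reduction via Tanaka and integration-by-parts is exactly the paper's approach: one splits $\int_0^t e^{-\lambda(t-s)}\,\di L^0_s$ into a term controlled by $\E[|X_t|]$ and a term controlled by $\sup_s\E[|b_s|]$, and then cites $\sup_{t}\E[(\min_k\pi_t^k)^{-1}]<\infty$ and $\sup_{t}\E[(\min_k\tilde\pi_t^k)^{-1}]<\infty$ (which is \cite[Proposition~3.7]{chigansky07}) to handle the drift. So the architecture is correct.

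Where you take a detour is in bounding $\E[|X_t|]$. You propose to pass through $|X_t|\le 2\hilbert(\pi_t,\tilde\pi_t)$, then import an $\ell^1$-robustness estimate from \cite{chigansky07} and convert it back to Hilbert via lower bounds on the filter components. This works, but it is circuitous and, more importantly, brings in the full robustness machinery of \cite{chigansky07} as a prerequisite---precisely what the present paper is trying to supersede with simpler arguments. The paper's route is much more direct: since $X_t=\Delta_{ik}(t)-\Delta_{jl}(t)$ is a sum of terms of the form $\pm\log\pi_t^m$ and $\pm\log\tilde\pi_t^m$ with $\pi_t^m,\tilde\pi_t^m\in(0,1)$, the elementary inequality $|\log x|\le 1/x$ for $x\in(0,1]$ gives
\[
|X_t|\;\le\;\frac{C}{\min_m\pi_t^m}+\frac{C}{\min_m\tilde\pi_t^m},
\]
so $\sup_t\E[|X_t|]<\infty$ follows from the \emph{same} moment bound you already need for the drift. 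No robustness result, $\ell^1$ or otherwise, is required. Once you notice this, your ``main obstacle'' collapses to a single citation of \cite[Proposition~3.7]{chigansky07}.
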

	
	\begin{rmk}
		We have stated Theorem~\ref{thm:robustness_misspecified_model_params} above with the decay rate $\lambda$ a function of $Q$, the `true' dynamics of the Markov chain, and the (first) error term a function of the `misspecified' process $\tilde \pi_t$. However, nothing in our proof prevents us from doing the opposite, if we so wish:~the theorem still holds if we replace $\lambda$ with $\tilde \lambda = 2 \min_{i \neq j} \sqrt{\tilde q_{ij} \tilde q_{ji}}$, and $\tilde \pi_t$ with $\pi_t$. Note that the error term due to the misspecification of $h$ (and the local time terms) stay the same.
	\end{rmk}

	We now set out to prove these results. As in section \ref{sec:contraction}, we start by transforming $\pi_t, \tilde \pi_t$ into $\theta_t, \tilde \theta_t$ and derive the dynamics of $\Delta_{ik}(t) = \theta^i_k (t) - \tilde \theta^i_k (t) = \log \frac{\pi^i}{\pi^k}(t) - \log \frac{\tilde \pi^i}{\tilde \pi^k}(t)$ for $(i,k) \in \mathbf{N} \times \mathbf{N}$ (recalling that $\Delta_{ii}= 0$). Note that $\theta^i_k$, $\tilde \theta^i_k$ and $\hilbert (\pi_t, \tilde \pi_t)$ are all a.s.~well-defined for finite $t \ge 0$, (since Lemma \ref{lemma:non_divergence_to_boundary} holds equivalently when the dynamics of the filter $\tilde \pi$ are given by parameters $\tilde Q$ and $\tilde h$).
	
	Applying It\^o's formula to \eqref{eq:wonhamNorm} and \eqref{eq:wonham_wrong_params}, we derive the dynamics of the difference process $\Delta_{ik}(t)$ as
	\begin{align}
	\di \Delta_{ik}(t)
	&= -\sum_{\substack{j = 0 \\ j\neq k}}^{n} \Bigg( q_{jk} \frac{\pi^j_t}{\pi^k_t} - \tilde q_{jk} \frac{\tilde\pi^j_t}{\tilde\pi^k_t} \Bigg) \dt
	+ \sum_{\substack{j = 0 \\ j\neq i}}^{n} \Bigg( q_{ji} \frac{\pi^j_t}{\pi^i_t} - \tilde q_{ji} \frac{\tilde\pi^j_t}{\tilde\pi^i_t} \Bigg)\dt
	+ (q_{ii} - \tilde q_{ii} - q_{kk} + \tilde q_{kk}) \dt  \nonumber \\
	&\quad + (h^i - \tilde h^i - h^k + \tilde h^k) (\di B_t + \pi_t^{\top} h \dt)
	+ \frac{1}{2} \Big( (h^k)^2 - (\tilde h^k)^2 - (h^i)^2 + (\tilde h^i)^2 \Big) \dt, \nonumber \\
	\Delta_{ik}(0) &= \log \frac{\mu^i}{\mu^j} - \log \frac{\nu^i}{\nu^j}, \label{eq:delta_diff_misspecified} 
	\end{align}
	where we have again introduced the innovation process $B_t = Y_t - \int_0^t \pi_s^{\top} h \ds$.
	
	We note straight away that if $\tilde h = h$, the stochastic term disappears, as well as all the drift terms involving $h$ and $\tilde h$. So, in this simple case, we recover once again $C^1$ dynamics for $\Delta_{ik}(t)$ and arguments similar to the ones in Section \ref{sec:contraction} apply, to yield the following estimate for all $t < \infty$
	\begin{align}
	\hilbert (\pi_t, \tilde \pi_t) 
	&\le \hilbert (\mu, \nu) e^{- \int_0^t \tilde \lambda_s \ds} + \int_0^t e^{- \int_s^t \tilde \lambda_r \di r}  \max_{i,k} \bigg\{ \frac{(\delta Q^\top \tilde \pi_s)^i}{\tilde \pi^i_s} - \frac{(\delta Q^\top \tilde \pi_s)^k}{\tilde \pi^k_s} \bigg\} \ds \nonumber \\
	&\le e^{-\lambda t} \hilbert (\mu, \nu) + 2 \max_{i,k} |\tilde q_{ik} -  q_{ik}| \int_0^t e^{- \lambda (t-s)} \frac{1}{\min_{j} \tilde \pi^j(s)} \ds \nonumber \\
	&\le e^{- \lambda t} \hilbert (\mu, \nu) + \frac{2}{\lambda} \max_{i,k} |\tilde q_{ik} -  q_{ik}| \,  \frac{1}{\min_{j \in \mathbf{N}, \, s \in [0, t]} \tilde \pi^j(s)} (1 - e^{- \lambda t}), \label{eq:error_bound_Q_tilde}
	\end{align} 
	where $\delta Q : = \tilde Q - Q$ and $\min_{j \in \mathbf{N}, \, s \in [0, t]} \tilde \pi^j(s) \neq 0$ almost surely by Lemma \ref{lemma:non_divergence_to_boundary}, and $\tilde \lambda_t$ and $\lambda$ are respectively the pathwise contraction rate from \eqref{eq:pathwise_rate_hilbert} in Proposition~\ref{prop:pathwise_bound_stability_ode} and the deterministic rate of Theorem~\ref{thm:contraction}. Tighter bounds for this error, in the spirit of Proposition~\ref{prop:pathwise_bound_stability_ode}, are possible, but we will state them later in Section~\ref{sec:approximate_filter}, when we treat the error of general approximate filters (of which a filter with misspecified model parameters is a specific example).
	
	In the case $\tilde h \neq h$, the strategy of proof developed in Section \ref{sec:contraction} cannot be applied directly. It is not unlikely that, by carefully modifying the arguments to account for the stochastic terms, for example by iterated application of Tanaka's formula, one could derive dynamics for $\Delta_{\infty} (t) = \max_{(i,k) \in \mathbf{N} \times \mathbf{N}} \Delta_{ik}(t)$ similar to \eqref{eq:evol_delta_infty}. Here, however, we present a different strategy. We start by introducing a few definitions.
	
	Recall the following smooth approximations of the maximum and the argmax.
	\begin{defn}\label{def:smooth_max}
		Let $\alpha \in (0, \infty)$ and let $\mathbf{X}_t = \{X^0_t, \dots, X^n_t \}$ be a family of real-valued, continuous random variables. Define the \textit{LogSumExp} function $LSE_{\alpha}(\mathbf{X}_{\cdot})( t)$ as
		\begin{equation}\label{eq:logsumexp}
		LSE_{\alpha}(\mathbf{X}_{\cdot})(t) = \frac{1}{\alpha} \log \, \sum_i e^{\alpha X^i_t}.
		\end{equation}
		\\
		For $\mathbf{g}_t = \{g^i_t(x)\}_{i=0}^n$ a family of real-valued functions, define the \textit{SoftArgMax} (also known as \textit{SoftMax}) function $S^{arg}_{\alpha}(\mathbf{X}_{\cdot}, \mathbf{g}_{\cdot})(t)$ as
		\begin{equation}\label{eq:soft_argmax}
		S^{arg}_{\alpha}(\mathbf{X}_{\cdot}, \mathbf{g}_{\cdot})(t) = \frac{ \sum_{i} g^i_t(X^i_t) e^{\alpha X^i_t}}{\sum_k e^{\alpha X^k_t} }.
		\end{equation}
	\end{defn}
	Note that, for each $\omega$, we have pointwise convergence of $LSE_{\alpha}(\mathbf{X}_{\cdot})$ to $\max_i X^i$ and of $S^{arg}_{\alpha}(\mathbf{X}_{\cdot}, \mathbf{g}_{\cdot}(\mathbf{X}_{\cdot}))(t)$ to $\frac{1}{|\mathcal{I}|}\sum_{j \in \mathcal{I}} g^j_t(X^j_t)$ where $ \mathcal{I} = \argmax_i X^i$
	as $\alpha \rightarrow  \infty$ for every $t$ (see Appendix~\ref{app:smooth_max}).
	
	We will also need the definition of the \textit{local time} $L_a^t$ of a continuous semimartingale in the arguments that follow. In particular, we will only be concerned with local times of continuous semimartingales whose finite variation part is absolutely continuous (e.g. It\^o processes). In this case we can take the following definition for the local time of a semimartingale $Z$ with absolutely continuous finite variation part (adapting from Revuz and Yor \cite[Chapter~6,~Corollary~1.9]{revuz_yor} and noting that if the finite variation part of $Z$ is absolutely continuous, then the proof of \cite[Chapter~6,~Theorem~1.7]{revuz_yor} yields that $L_t^a$ has a a.s. bicontinuous modification in $a$ and $t$).
	\begin{defn}\label{def:local_time}
		Let $Z_t = M_t + A_t$ be a real valued continuous semimartingale, where $M$ is a local martingale and $A$ is an absolutely continuous finite variation process. We take the \textit{local time} of $Z$ at $a \in \R$, at time $t$, to be the process $L^t_a$ continuous in $t \in \R^+$ and $a$ given by
		\begin{equation*}
		L_t^a = \lim_{\varepsilon \rightarrow 0} \frac{1}{2 \varepsilon} \int_0^t \mathds{1}_{(a - \varepsilon, a + \varepsilon)}(Z_s) \di \langle Z \rangle_s \quad a.s.
		\end{equation*}
	\end{defn}

	\subsubsection{Proof of Theorem \ref{thm:robustness_misspecified_model_params}}
	
	Our strategy for the proof of Theorem~\ref{thm:robustness_misspecified_model_params} is as follows. We use \eqref{eq:logsumexp} to define a smooth approximation of the maximal process $\Delta_{\infty}$, and we derive its dynamics through It\^o's formula. Then a bit of care is required when taking the limit as $\alpha \rightarrow \infty$, as some of the integrands converge to Dirac masses when the maximal process is attained at multiple indices at the same time. In Appendix~\ref{app:smooth_max} we show how to deal with these terms, which determine the emergence of local times in our estimates.
	
	\begin{proof}[Proof of Theorem~\ref{thm:robustness_misspecified_model_params}]
		Consider the family of processes $\boldsymbol{\Delta}_t = \{\Delta_{ik}(t)\}_{i \in \mathbf{N}, k \in \mathbf{N}}$ with evolution equations given by \eqref{eq:delta_diff_misspecified}. We apply It\^o's Lemma to derive the dynamics of $LSE_{\alpha}(\boldsymbol{\Delta}_{\cdot})(t)$
		\begin{align*}
		\di LSE_{\alpha}(\boldsymbol{\Delta}_{\cdot})(t)
		&= \sum_{(j,l)} \frac{e^{\alpha \Delta_{jl}(t)} }{\sum_{(i,k)} e^{\alpha \Delta_{ik}(t)}} \di \Delta_{jl}(t) \\
		&\quad + \frac{1}{2} \sum_{(j,l)} \sum_{(u,v)} \alpha e^{\alpha\Delta_{jl}(t)} \bigg( \frac{\mathds{1}_{\{(u,v) = (j,l)\}}}{\sum_{(i,k)} e^{\alpha \Delta_{ik}(t)}}
		- \frac{e^{\alpha \Delta_{uv}(t)}}{(\sum_{(i,k)} e^{\alpha \Delta_{ik}(t)})^2}   \bigg) \di \langle \Delta_{jl}(\cdot), \Delta_{uv}(\cdot)  \rangle_t,
		\end{align*}
		where all the summations happen over the set of double indices $\mathbf{N} \times \mathbf{N}$.
		
		Recalling our notation $T_{ik}(t) :=  \frac{\pi_t^i}{\pi_t^k} - \frac{\tilde\pi_t^i}{\tilde\pi_t^k}$ from Section \ref{sec:contraction}, we add and subtract terms appropriately to the dynamics of $\Delta_{ik}$ to yield
		\begin{align*}
		\di \Delta_{ik}(t)
		&= \bigg( -\sum_{\substack{j = 0 \\ j\neq k}}^{n} q_{jk} T_{jk}(t) 
		+ \sum_{\substack{j = 0 \\ j\neq i}}^{n} q_{ji} T_{ji}(t) \bigg) \dt
		+ \bigg( \frac{(\delta Q^\top \tilde \pi_t)^k}{\tilde \pi^k_t} - \frac{(\delta Q^\top \tilde \pi_t)^i}{\tilde \pi^i_t} \bigg) \dt
		\nonumber \\
		&\quad + (h^i - \tilde h^i - h^k + \tilde h^k) \di B_t + \big(h^i - \tilde h^i - h^k + \tilde h^k \big) \pi_t^{\top} h \dt \\
		&\quad + \frac{1}{2} \Big((h^k)^2 - (\tilde h^k)^2 -(h^i)^2 + (\tilde h^i)^2 \Big) \dt \nonumber \\
		&= : C^{q, 1}_{ik}(t) \dt + C^{q, 2}_{ik}(t) \dt + C^{h,1}_{ik} \di B_t + C^{h,1}_{ik} \pi_t^{\top} h \dt + \frac{1}{2} C^{h,2}_{ik} \dt,
		\end{align*}
		where again $\delta Q = \tilde Q - Q$.
		
		Then we have, for all $s \le t < \infty$
		\begin{align*}
		LSE_{\alpha}(\boldsymbol{\Delta}_{\cdot})(t)
		&= LSE_{\alpha}(\boldsymbol{\Delta}_{\cdot})(s) \\
		&\quad+ \int_s^t \sum_{(j,l)} \frac{e^{\alpha \Delta_{jl}(r)} }{\sum_{(i,k)} e^{\alpha \Delta_{ik}(r)}} \bigg( C^{q, 1}_{jl}(r) + C^{q, 2}_{jl}(r) + C^{h,1}_{jl} \, \pi_r^{\top} h + \frac{1}{2} C^{h,2}_{jl} \bigg) \di r \\
		&\quad + \int_s^t \sum_{(j,l)} \frac{e^{\alpha \Delta_{jl}(r)} }{\sum_{(i,k)} e^{\alpha \Delta_{ik}(r)}} \, C^{h,1}_{jl} \di B_r \\
		&\quad + \frac{1}{2} \int_s^t  \sum_{(j,l)} \sum_{(u,v) \neq (j,l)} \frac{\alpha e^{ \alpha (\Delta_{jl}(r)+\Delta_{uv}(r))}}{(\sum_{(i,k)} e^{\alpha \Delta_{ik}(r)})^2}  \, \bigg( (C^{h,1}_{jl})^2 - C^{h,1}_{jl} C^{h,1}_{uv} \bigg) \di r \\
		&=: LSE_{\alpha}(\boldsymbol{\Delta}_{\cdot})(s) + I_1 + I_2 + \frac{1}{2}I_3.
		\end{align*}
		We want to take the limit, on both sides, as $\alpha \rightarrow \infty$.
		
		Denote
		\begin{equation*}
		\Delta_{\infty}(t) = \max_{(j,l) \in \mathbf{N} \times \mathbf{N}} \Delta_{jl}(t),
		\end{equation*}
		and we immediately have that $LSE_{\alpha}(\boldsymbol{\Delta}_{\cdot})(t)$ converges to $\Delta_{\infty}(t)$ as $\alpha \rightarrow \infty$.
		
		We let $\lambda = 2 \min_{i \neq k} \sqrt{q_{ik} \, q_{ki}}$, and for $j \in \mathbf{N}$ define the error terms
		\begin{equation*}
		\mathcal{E}_t^{1, j} = \frac{(\delta Q^\top \tilde \pi_s)^j}{\tilde \pi^j_s}, \quad
		\mathcal{E}^{2, j} = (\tilde h^j)^2 - ( h^j)^2, \quad
		\mathcal{E}^{3, j} = \tilde h^j -  h^j.
		\end{equation*} For each time $t$, define by $\mathcal{I}_t \subset \mathbf{N}\times \mathbf{N}$ the argmax of $\boldsymbol{\Delta}_t$, i.e.~the set of double indices $(i,k)$ such that $\Delta_{ik}(t) = \Delta_{\infty}(t)$ for all $(i,k) \in \mathcal{I}_t$. Let $|\mathcal{I}_t|$ denote the size of $\mathcal{I}_t$. Let us consider $I_1$, $I_2$ and $I_3$ one at a time.
		
		Start with $I_1$. We recognize as integrands $S^{arg}_{\alpha}(\boldsymbol{\Delta}_{\cdot}, \mathbf{C}^{q,1}(\cdot))(r)$,  $S^{arg}_{\alpha}(\boldsymbol{\Delta}_{\cdot}, \mathbf{C}^{q,2}(\cdot))(r)$, and  $S^{arg}_{\alpha}(\boldsymbol{\Delta}_{\cdot}, \mathbf{C}^{h,1})(r) \, \pi_r^{\top} h$, as well as $S^{arg}_{\alpha}(\boldsymbol{\Delta}_{\cdot}, \mathbf{C}^{h,2})(r)$. These are bounded by $\max_{i,j \in \mathbf{N} \times \mathbf{N}} C^{q,1}_{ij}$ and $\max_{i,j \in \mathbf{N} \times \mathbf{N}} C^{q,2}_{ij}$ respectively, which are continuous in time and therefore integrable on $[0,t]$, and by $\max_k |h^k| \, \max_{i,j \in \mathbf{N} \times \mathbf{N}} C^{h,1} < \infty$ and $\max_{i,j \in \mathbf{N} \times \mathbf{N}} C^{h,2} < \infty$ which are bounded by assumptions on $h$, and therefore integrable. Then we can apply the dominated convergence theorem to bring the limit inside the integral
		and Lemma \ref{lemma:smooth_arg_max} yields, for all $s \le t$,
		\begin{align*}
		\lim_{\alpha \rightarrow \infty} I_1
		&= \int_s^t \frac{1}{|\mathcal{I}_r|}\sum_{(i,k) \in \mathcal{I}_r} \Big( C^{q, 1}_{ik}(r) + C^{q, 2}_{ik}(r) + C^{h,1}_{ik} \, \pi_r^{\top} h + \frac{1}{2} C^{h,3}_{ik} \Big) \di r \\
		&\le - \int_s^t \frac{4}{|\mathcal{I}_r|}\sum_{(i,k) \in \mathcal{I}_r} \sqrt{q_{ik} \, q_{k i}} \, \sinh \bigg(\frac{\Delta_{ik}(r)}{2} \bigg) \di r
		+ \int_s^t \frac{1}{|\mathcal{I}_r|}\sum_{(i,k) \in \mathcal{I}_r} \bigg( \frac{(\delta Q^\top \tilde \pi_r)^k}{\tilde \pi^k_r} - \frac{(\delta Q^\top \tilde \pi_r)^i}{\tilde \pi^i_r} \bigg) \di r \\
		&\quad+  \max_j |h^j| \int_s^t \frac{1}{|\mathcal{I}_r|}\sum_{(i,k) \in \mathcal{I}_r}  \big( h^i - \tilde h^i - h^k + \tilde h^k \big) \di r \\
		&\quad+ \int_s^t \frac{1}{2 |\mathcal{I}_r|}\sum_{(i,k)\in \mathcal{I}_r}  \Big((h^k)^2 - (\tilde h^k)^2 -(h^i)^2 + (\tilde h^i)^2 \Big) \di r \\
		&\le - \lambda \int_s^t \Delta_{\infty}(r) \di r
		+ \int_s^t \max_{i,k} \Big\{ \mathcal{E}_r^{1, i} - \mathcal{E}_r^{1, k}  +\frac{1}{2} \big(  \mathcal{E}^{2, k} - \mathcal{E}^{2, i} \big)     \Big\} \di r \\
		&\quad + \max_j |h^j|  \max_{i,k} \big\{  \mathcal{E}^{3, i} - \mathcal{E}^{3, k} \big\} (t-s),
		\end{align*}
		where we have bounded $C^{q, 1}_{ik}(r)$ as in the proof of Theorem~\ref{thm:contraction}, and noted that $2\sinh(x/2) \ge x$ for $x \ge 0$.
		
		Similarly, we can swap limit and integration when dealing with $I_2$ by dominated convergence for stochastic integrals, and we get
		\begin{align*}
		\lim_{\alpha \rightarrow \infty} I_2
		&= \int_s^t \frac{1}{|\mathcal{I}_r|}\sum_{(i,k) \in \mathcal{I}_r} (h^i - \tilde h^i - h^k + \tilde h^k) \di B_r.
		\end{align*}
		
		Finally, recalling \eqref{eq:delta_diff_misspecified} and noting that the processes $\{\Delta_{ik}\}$ are continuous semimartingales of the form \eqref{eq:example_semimart} considered in Appendix~\ref{app:smooth_max}, Proposition~\ref{prop:local_time_bound} applies and we have
		\begin{equation*}
		\lim_{\alpha \rightarrow \infty} I_3 \le \frac{1}{2} \sum_{(i,k)} \sum_{(j,l) \neq (i,k)} \Big( L^{0}_t(\Delta_{ik}(\cdot) - \Delta_{jl}(\cdot)) - L^{0}_s(\Delta_{ik}(\cdot) - \Delta_{jl}(\cdot)) \Big) \quad \text{a.s.,}
		\end{equation*}
		where $L^{0}_t(\Delta_{ik}(\cdot) - \Delta_{jl}(\cdot))$ denotes the local time at 0, at time $t$, of the difference process $(\Delta_{ik}(r) - \Delta_{jl}(r))_{r\ge 0}$.
		
		Putting all these estimates together, we have that, for all $s \le t$,
		\begin{align*}
		\Delta_{\infty}(t)
		&\le \Delta_{\infty}(s)- \lambda \int_s^t \Delta_{\infty}(r) \di r
		+ \int_s^t \max_{i,k} \Big\{ \mathcal{E}_r^{1, i}  - \mathcal{E}_r^{1, k}  +\frac{1}{2} \big(  \mathcal{E}^{2, k} - \mathcal{E}^{2, i} \big)     \Big\} \di r \\
		&\quad + \max_j |h^j|  \max_{i,k} \big\{  \mathcal{E}^{3, i} - \mathcal{E}^{3, k} \big\} (t-s)
		+ \int_s^t \frac{1}{|\mathcal{I}_r|}\sum_{(i,k) \in \mathcal{I}_r} (h^i - \tilde h^i - h^k + \tilde h^k) \di B_r \\
		&\quad+ \frac{1}{4}\sum_{(i,k)} \sum_{(j,l) \neq (i,k)} \Big( L^{0}_t(\Delta_{ik}(\cdot) - \Delta_{jl}(\cdot)) - L^{0}_s(\Delta_{ik}(\cdot) - \Delta_{jl}(\cdot)) \Big).
		\end{align*}
		Taking expectation with respect to the reference measure $\mathbb{P}$ on both sides, the stochastic integral vanishes since the integrand is bounded, so we have
		\begin{align*}
		\di \E \left[ \Delta_{\infty}(t) \right]
		&\le -  \lambda \expect{ \Delta_{\infty}(t)} \dt
		+ \expect{\max_{i,k} \Big\{ \mathcal{E}_t^{1, i}  - \mathcal{E}_t^{1, k}  +\frac{1}{2} \big(  \mathcal{E}^{2, k} - \mathcal{E}^{2, i} \big)     \Big\}} \dt \\
		&\quad + \max_j |h^j|  \max_{i,k} \big\{  \mathcal{E}^{3, i} - \mathcal{E}^{3, k} \big\} \dt 
		+ \frac{1}{4}\sum_{(i,k)} \sum_{(j,l) \neq (i,k)} \di \expect{ L^{0}_t(\Delta_{ik}(\cdot) - \Delta_{jl}(\cdot)) },
		\end{align*}
		where the left-hand side and the last term on the right-hand side are to be understood as Lebesgue--Stieltjes measures. Using the chain rule to find the dynamics of $e^{\lambda t} \E \left[ \Delta_{\infty}(t) \right]$, and integrating both sides of the resulting differential inequality yields that, for all $ s \le t < \infty$,
		\begin{align*}
		\E \left[ \Delta_{\infty}(t) \right]
		&\le \expect{\Delta_{\infty}(s)} e^{-\lambda (t-s)}
		+  \int_s^t e^{-\lambda(t-r)} \E \bigg[ \max_{i,k} \Big\{ \mathcal{E}_r^{1, i}  - \mathcal{E}_r^{1, k} +\frac{1}{2} \big(  \mathcal{E}^{2, k} - \mathcal{E}^{2, i} \big)     \Big\} \bigg] \di r  \nonumber \\
		&\quad + \max_j |h^j|  \max_{i,k} \big\{  \mathcal{E}^{3, i} - \mathcal{E}^{3, k} \big\} \int_s^t e^{-\lambda(t-r)}  \di r \\
		&\quad+ \frac{1}{4}\sum_{(i,k)} \sum_{(j,l) \neq (i,k)} \int_s^t e^{-\lambda(t-r)} \di \expect{L^{0}_r(\Delta_{ik}(\cdot) - \Delta_{jl}(\cdot))}.
		\end{align*}
		Bounding the error terms, since the local time is of finite variation (hence also its expectation), we can apply integration by parts for Stieltjes integrals twice and use Fubini--Tonelli on the last term of the right-hand side to obtain
		\begin{align}
		\expect{\Delta_{\infty}(t)}
		&\le \expect{\Delta_{\infty}(s)} e^{-\lambda (t-s)}
		+ 2 \max_{i,k} |\tilde q_{ik} - q_{ik}| \int_s^t e^{-\lambda(t-r)} \expect{\frac{1}{\min_j \tilde \pi^j_r}} \di r \nonumber \\
		&\quad + \Big( 2 \max_j |h^j| \max_i |h^i - \tilde h^i| + \max_i |(h^i)^2 - (\tilde h^i)^2| \Big) \int_s^t e^{-\lambda(t-r)} \di r \nonumber \\
		&\quad + \frac{1}{4}\sum_{(i,k)} \sum_{(j,l) \neq (i,k)} \E \bigg[ \int_s^t e^{-\lambda(t-r)} \di L^{0}_r(\Delta_{ik}(\cdot) - \Delta_{jl}(\cdot)) \bigg], \label{eq:expectation_delta_misspecified}
		\end{align}
		for all $s \le t < \infty$, which is what we set out to prove.
		
		We now move on to the second part of the theorem. First of all, analogously to \cite[Lemma~3.6]{chigansky07}, one can get an explicit bound on $\E [ (\min_j \tilde \pi^j_t)^{-1}  ]$ which depends continuously on the parameters $(\nu,\tilde{Q},\tilde{h})$ for $\nu \in \mathring{\mathcal{S}}^n$. In particular, we have
		\begin{equation*}
		\expect{\frac{1}{\min_j \tilde \pi^j_t}} \le \max_j \left\{ \frac{1}{\nu^j} \exp \Big\{- \tilde q_{jj} t + \max_k (\tilde h^j - \tilde h^k)^2 t \Big\} \right\} 
		\end{equation*}
		for all $t < \infty$, and the first integral on the right-hand side of \eqref{eq:expectation_delta_misspecified} is controlled as we take the limit as $\tilde h \rightarrow h$, for $\tilde h$ in a compact set around $h$. The second term clearly vanishes as $\tilde h \rightarrow h$.
		
		Next, we move to the integrals against the local times. Let $\tilde \pi_t (v)$ denote the unique solution to \eqref{eq:wonham_wrong_params} with $v \in \R^{n+1}$ in place of $\tilde h$. Note that the drift of each process $\Delta_{ik} - \Delta_{jl}$, for $(i,k), (j,l) \in \mathbf{N} \times \mathbf{N}$ and $(i,k) \neq (j,l)$, is then given by $b^{ik,jl}_t ( \tilde h)$, where 
		\begin{align}
		b^{ik,jl}_t ( v) 
		&:= -\sum_{r=0}^{n} \Bigg( q_{rk} \frac{\pi^r_t}{\pi^k_t} - \tilde q_{rk} \frac{\tilde\pi^r_t(v)}{\tilde\pi^k_t(v)} \Bigg)
		+ \sum_{r=0}^{n} \Bigg( q_{ri} \frac{\pi^r_t}{\pi^i_t} - \tilde q_{ri} \frac{\tilde\pi^r_t(v)}{\tilde\pi^i_t(v)} \Bigg) \nonumber \\
		&\quad+\sum_{r=0}^{n} \Bigg( q_{rl} \frac{\pi^r_t}{\pi^l_t} - \tilde q_{rl} \frac{\tilde\pi^r_t(v)}{\tilde\pi^l_t(v)} \Bigg) 
		- \sum_{r=0}^{n} \Bigg( q_{rj} \frac{\pi^r_t}{\pi^j_t} - \tilde q_{rj} \frac{\tilde\pi^r_t(v)}{\tilde\pi^j_t(v)} \Bigg) \nonumber \\
		&\quad+ (h^i - v^i - h^k + v^k -  h^j + v^j + h^l - v^l) \pi_t^{\top} h  \nonumber \\
		&\quad+ \frac{1}{2} \Big( (h^k)^2 - (v^k)^2 - (h^i)^2 + (v^i)^2 - (h^l)^2 + (v^l)^2 + (h^j)^2 - (v^j)^2 \Big). \label{eq:drift_delta_ik_jl}
		\end{align}
		Consider the difference of $b^{ik,jl} ( \tilde h) $ and $b^{ik,jl} ( h) $ on $[0,t]$. Using that $\tilde \pi(h)$ and $\tilde \pi(\tilde h)$ live in the simplex, we get
		\begin{align*}
		\E\biggl[ &\sup_{s \leq t} \big| b^{ik,jl}_s ( h) - b^{ik,jl}_s (\tilde h)    \big| \biggr]^2  \\
		&\le  \!\!\!\! \sum_{u \in \{i,k,j,l\}} \sum_{r \neq u} \tilde q_{ru} \E\Biggl[\sup_{s \le t}\bigg( \frac{1}{ \tilde \pi_s^u(h) \tilde \pi_s^u (\tilde h)} \bigg)^{\! 2}\Biggr] \E\Biggl[ \sup_{s \le t} \bigg( \big|  \tilde \pi_s^u (\tilde h) - \tilde \pi^u_s (h)   \big| + \big|  \tilde \pi_s^r (\tilde h) - \tilde \pi^r_s (h)  \big|  \bigg)^{\!2} \Biggr].
		\end{align*} 
		For all $u \in \mathbf{N}$, a minor extension of \cite[Lemma~3.6]{chigansky07} gives that the first expectation is controlled uniformly in $\tilde h$, for $\tilde h$ belonging to a compact set around $h$. Since $\tilde \pi_t $ lives in the simplex, the SDE \eqref{eq:wonham_wrong_params} has Lipschitz coefficients, and we can apply standard stability arguments (such as \cite[Theorem~16.4.3]{cohen15}) to see that the second expectation tends to $0$ as $\tilde h \rightarrow h$. Hence we have ucp convergence $ b^{ik,jl} (\tilde h) \rightarrow b^{ik,jl} ( h)$ on $[0,t]$ as $\tilde h \rightarrow h$.
		
		Now fix an arbitrary sequence $\{ \tilde h_n\}_{n \in \N}$ such that $\tilde h_n \rightarrow h$. By the above, we can take a subsequence $\{ \tilde h_{n_r}\}_{r \in \N}$ such that $b^{ik,jl}_s (\tilde h_{n_r})$ converges uniformly to $b^{ik,jl}_s ( h)$ on $[0,t]$ a.s. From now on, when we write $\tilde h \rightarrow h$, we mean the limit along this subsequence. Denote by $(\Delta_{ik} - \Delta_{jl})_t^{\star}$ the limit of $(\Delta_{ik} - \Delta_{jl})_t$ as $\tilde h \rightarrow h$. Using this uniform convergence, we get that, a.s., for all $s\in [0,t]$,
		\begin{align*}
		(\Delta_{ik} - \Delta_{jl})_s^{\star} &= (\Delta_{ik} - \Delta_{jl})_0
		+ \lim_{\tilde h \rightarrow h}
		\int_0^s b^{ik,jl}_r ( \tilde h) \di r \\
		&\quad + \lim_{\tilde h \rightarrow h} (h^i - \tilde h^i - h^k + \tilde h^k - h^j + \tilde h^j + h^l - \tilde h^l) B_s \\
		&= (\Delta_{ik} - \Delta_{jl})_0
		+ \int_0^s b^{ik,jl}_r ( h) \di r,
		\end{align*}
		and $(\Delta_{ik} - \Delta_{jl})_s^{\star}$ is absolutely continuous with derivative $b^{ik,jl}_s ( h)$.
		
		Now, by Tanaka's formula we have that
		\begin{equation*}
		L^0_t(\Delta_{ik} - \Delta_{jl}) = |(\Delta_{ik} - \Delta_{jl})_t| - |(\Delta_{ik} - \Delta_{jl})_0| + \int_0^t \operatorname{sign}((\Delta_{ik} - \Delta_{jl})_s) \di (\Delta_{ik} - \Delta_{jl})_s,
		\end{equation*}
		with the convention $\operatorname{sign}(0) = -1$.
		Taking the limit as $\tilde h \rightarrow h$ on both sides of the equation above, the stochastic integral vanishes, and applying dominated convergence to the integral involving $b^{ik,jl}$, we have
		\begin{equation}\label{eq:limit_local_time}
		\lim_{\tilde h \rightarrow h} L^0_t(\Delta_{ik} - \Delta_{jl}) = |(\Delta_{ik} - \Delta_{jl})_t^{\star}| - |(\Delta_{ik} - \Delta_{jl})_0| + \int_0^t \lim_{\tilde h \rightarrow h}\operatorname{sign}((\Delta_{ik} - \Delta_{jl})_s) b^{ik,jl}_s(\tilde h) \ds.
		\end{equation}
		Consider the limit inside the integral. Note that for all $s \le t$ such that $b^{ik,jl}_s( h) \neq 0$ and $(\Delta_{ik} - \Delta_{jl})_s^{\star} \neq 0$, we have a.s.
		\begin{equation*}
		\lim_{\tilde h \rightarrow h}\operatorname{sign}((\Delta_{ik} - \Delta_{jl})_s) b^{ik,jl}_s(\tilde h)
		=  \operatorname{sign}((\Delta_{ik} - \Delta_{jl})_s^{\star}) b^{ik,jl}_s( h).
		\end{equation*}
		Now consider $s \le t$ such that $b^{ik,jl}_s( h) = 0$. Then we have 
		\begin{equation*}
		\lim_{\tilde h \rightarrow h}\operatorname{sign}((\Delta_{ik} - \Delta_{jl})_s) b^{ik,jl}_s (\tilde h)
		=  0 = \operatorname{sign}((\Delta_{ik} - \Delta_{jl})_s^{\star}) b^{ik,jl}_s( h),
		\end{equation*}
		for all such $s$. Finally, consider times $s \le t$ such that $b^{ik,jl}_s( h) \neq 0$ but $(\Delta_{ik} - \Delta_{jl})_s^{\star} = 0$. Then potentially we have $\operatorname{sign}((\Delta_{ik} - \Delta_{jl})_s) b^{ik,jl}_s(\tilde h) \nrightarrow \operatorname{sign}((\Delta_{ik} - \Delta_{jl})_s^{\star}) b^{ik,jl}_s( h)$ as $\tilde h$ goes to $h$. However, by Lemma~\ref{lemma:dupuis}, the set
		\begin{equation*}
		\left\{ s \, : \, (\Delta_{ik} - \Delta_{jl})_s^{\star} = 0, \frac{\di}{\ds} (\Delta_{ik} - \Delta_{jl})_s^{\star} =  b^{ik,jl}_s( h) \neq 0    \right\}
		\end{equation*}
		has Lebesgue measure zero. So finally we can conclude that a.s.
		\begin{equation*}
		\lim_{\tilde h \rightarrow h}\operatorname{sign}((\Delta_{ik} - \Delta_{jl})_s) b^{ik,jl}_s
		=  \operatorname{sign}((\Delta_{ik} - \Delta_{jl})_s^{\star})b^{ik,jl}_s( h), \quad \text{for a.a.} \: s \le t,
		\end{equation*}
		and hence, by absolute continuity of $(\Delta_{ik} - \Delta_{jl})_t^{\star}$, the right-hand side of \eqref{eq:limit_local_time} is 0. Thus we have proven a.s.~convergence of $L^0_t(\Delta_{ik} - \Delta_{jl}) \rightarrow 0$ as $\tilde h \rightarrow h$ along the subsequence $\{ \tilde h_{n_r}\}_{r \in \N}$, which implies convergence in probability along the same subsequence. On the other hand, the original sequence $\{ \tilde h_n \}_{n \in \N}$ was arbitrary, so we can repeat the argument above along any sequence and always find a subsequence along which $L^0_t(\Delta_{ik} - \Delta_{jl})$ converges to 0 in probability. It follows that $L^0_t(\Delta_{ik} - \Delta_{jl})$ vanishes in probability as $\tilde h \rightarrow h$. By Tanaka's formula, we can also check, similarly to how the ucp convergence was deduced, that $\E [L^0_t(\Delta_{ik} - \Delta_{jl})^2]$ is bounded uniformly in $\tilde h$, for $\tilde h$ in a compact set around $h$, and thus Vitali's convergence theorem gives $\E [L^0_t(\Delta_{ik} - \Delta_{jl})] \rightarrow 0$ as $\tilde h \rightarrow h$. This yields the theorem.
	\end{proof}
	
	\begin{proof}[Proof of Proposition~\ref{prop:finte_local_time}]
		We focus on the local time terms, since by similar arguments to \cite[Proposition~3.7]{chigansky07}, we immediately have that $\sup_{t > 0} \E [(\min_k \tilde \pi_t^k)^{-1}] < \infty$. Let $(i,k),(j,l) \in \mathbf{N} \times \mathbf{N}$, with $(i,k) \neq (j,l)$. Recall that by Tanaka's formula we can write the local time at 0 of $X_t := (\Delta_{ik} - \Delta_{jl})_t$ as
		\begin{equation*}
		L^0_t(X) = |X_t| - |X_0| + \int_0^t \operatorname{sign}(X_s) \di X_s.
		\end{equation*}
		Then we have
		\begin{align}
		\E \bigg[ \int_0^t e^{-  \lambda (t-s)} \di L_s^0(X_{\cdot})     \bigg]
		&= \E \bigg[ \int_0^t e^{- \lambda (t-s)} \di |X_{s}|    \bigg]
		+ \E \bigg[ \int_0^t e^{- \lambda (t-s)} \operatorname{sign}(X_s) \di X_s    \bigg] \nonumber \\
		&= \E \bigg[ \int_0^t e^{- \lambda (t-s)} \di |X_{s}|    \bigg]
		+ \E \bigg[ \int_0^t e^{- \lambda (t-s)} \operatorname{sign}(X_s) b^{ik,jl}_s (\tilde h) \ds    \bigg] \nonumber \\
		&\le \E \bigg[ \int_0^t e^{- \lambda (t-s)} \di |X_{s}|    \bigg]
		+ \sup_{s \le t } \E \Big[ \big| b^{ik,jl}_s ( \tilde h)  \big| \Big] \int_0^t e^{- \lambda (t-s)} \ds, \label{eq:expectation_neg_exp_local_time}
		\end{align}
		where $ b^{ik,jl}_s (\tilde h)$ is the drift of $X_t$, defined in \eqref{eq:drift_delta_ik_jl}. Since
		\begin{equation*}
		|b^{ik,jl}_t| \le K_q  \frac{1}{ \min_i \pi_t^i} + K_{\tilde q}  \frac{1}{ \min_i \tilde \pi_t^i} + K_h,
		\end{equation*}
		where $K_q$, $K_{\tilde q}$ and $K_h$ are constants only depending on $Q$, $\tilde Q$, $h$ and $\tilde h$, it follows that the second term in \eqref{eq:expectation_neg_exp_local_time} is finite as we take the supremum over all $t > 0$.  As for the first term, integrating by parts twice we have that
		\begin{align*}
		\E \bigg[ \int_0^t e^{- \lambda (t-s)} \di |X_{s}|    \bigg] 
		&= \E \big[ |X_t| \big] - |X_0| e^{-  \lambda t} - \lambda \E \Big[ \int_0^t |X_s| e^{- \lambda (t-s)} \ds \Big],
		\end{align*}
		and since $X_t = \Delta_{ik} - \Delta_{jl}$, $\Delta_{ik} = \log \pi_i/ \pi_k - \log \tilde \pi_i / \tilde \pi_k$ and $|\log (x)| \le 1/x$, we can again bound the right-hand side by multiples of $\E \big[ 1/\min_i \pi_t^i  \big]$ and $\E \big[ 1/\min_i \tilde \pi_t^i  \big]$, which remain finite as we take a supremum over $t > 0$.
	\end{proof}
	
	\subsection{Error bounds for an approximate filter}\label{sec:approximate_filter}
	
	The approach we presented in the previous subsection allows for a more general result. We can proceed exactly as before to compute the error of a general approximate filter, rather than simply the filter with modified $Q$ and $h$ (and $\sigma$, if we allow for $\sigma \neq 1$). The discussion in this section will yield the proofs of Theorem~\ref{thm:expected_hilbert_bounds} and Theorem~\ref{thm:pathwise_decay_approx_error}.
	
	Consider a general approximate filtering model given by \eqref{eq:wonhamApprox}, i.e.
	\begin{equation}\label{eq:approximate_filter}
	\di \tilde \pi_t  = \tilde f_t \dt + \tilde g_t \di Y_t, \quad \tilde \pi_0 = \nu,
	\end{equation}
	where $\tilde f_t, \tilde g_t$ are $ \R^{n+1}$-valued predictable processes. We will also need the following assumption:
	\begin{assumption}\label{assmp:approx_filter}
		With probability 1, $\tilde \pi_t \in \mathring{\mathcal S}^n$ for all $t < \infty$. Moreover, $\tilde f_t$ and $\tilde g_t$ are locally bounded and satisfy the integrability condition
		\begin{equation*}
		\expect{\int_0^t \max_i\frac{|\tilde f^i_s|}{\pi^i_s} \ds  + \bigg(\int_0^t \max_i \Big( \frac{\tilde g^i_s}{\pi^i_s}\Big)^2 \ds \bigg)^{1/2}} <\infty
		\end{equation*}
		for all $t<\infty$.
	\end{assumption}

	Note that the Wonham filter SDE \eqref{eq:wonhamNorm}, or the Wonham filter with misspecified model parameters given by \eqref{eq:wonham_wrong_params}, immediately satisfy Assumption~\assumptionref{assmp:approx_filter} by Lemma~\ref{lemma:non_divergence_to_boundary} and (a simple extension of) \cite[Lemma~3.6]{chigansky07}.

	We start by proving an intermediate result.
		\begin{prop}[Dynamics of the Hilbert error of an approximate filter]\label{prop:bounds_approx_filter}
		Let $\pi_t$ be the solution to \eqref{eq:wonhamNorm} and $\tilde \pi_t$ the solution to \eqref{eq:approximate_filter}. Suppose $\mu^i, \nu^i > 0 \quad \forall i$ and $q_{ij} > 0$ for all $i \neq j$. Under Assumption~\assumptionref{assmp:approx_filter}, for all $s \le t < \infty$, we have
		\begin{align}
		\hilbert (\pi_t, \tilde \pi_t)
		&\le \hilbert (\pi_s, \tilde \pi_s)- 2 \int_s^t \hspace{-5pt} \kappa_r \sinh \Big( \frac{\hilbert (\pi_r, \tilde \pi_r)}{2}\Big) \di r
		+\hspace{-3pt} \int_s^t \hspace{-5pt}\max_{i,k} \Big\{ \mathcal{E}_r^{1, i}  - \mathcal{E}_r^{1, k}  +\frac{1}{2} \big(  \mathcal{E}_r^{2, k} - \mathcal{E}_r^{2, i} \big)     \Big\} \di r \nonumber \\
		&\quad + \max_j |h^j|  \int_s^t \max_{i,k} \big\{  \mathcal{E}_r^{3, i} - \mathcal{E}_r^{3, k} \big\} \di r
		+ \int_s^t \frac{1}{|\mathcal{I}_r|}\sum_{(i,k) \in \mathcal{I}_r} \big(\mathcal{E}_r^{3, i} - \mathcal{E}_r^{3, k} \big) \di B_r \nonumber \\
		&\quad+ \frac{1}{4}\sum_{(i,k)} \sum_{(j,l) \neq (i,k)} \int_s^t \di L^{0}_r(\Delta_{ik}(\cdot) - \Delta_{jl}(\cdot)), \label{eq:integral_ineq_error}
		\end{align}
		where $B_t = Y_t - \int_0^t \pi_s^{\top} h \ds$ is the innovation process. For $j \in \mathbf{N}$ the error terms are given by
		\begin{equation*}
		\mathcal{E}_t^{1, j}  = \Bigg( \sum_{m=0}^{n} q_{mj} \frac{\tilde \pi_t^m}{\tilde \pi_t^j} \Bigg) - \frac{\tilde f_t^j}{\tilde \pi_t^j}, \quad
		\mathcal{E}_t^{2, j}  = (h^j)^2 - \frac{(\tilde g^j_t)^2}{(\tilde \pi_t^j)^2}, \quad
		\mathcal{E}_t^{3, j}  = h^j - \frac{\tilde g^j_t}{\tilde \pi_t^j}, \quad
		\end{equation*}
		and the processes $(\Delta_{ik}(t))_{t \ge 0}$ for $(i,k) \in \mathbf{N} \times \mathbf{N}$ are defined as $\Delta_{ik}(t) = \log \frac{\pi_t^i}{\pi_t^k} - \log \frac{\tilde \pi_t^i}{\tilde \pi_t^k}$. The set $\mathcal{I}_t = \{ (i,k) \, : \, \Delta_{ik}(t) = \hilbert (\pi_t, \tilde \pi_t)\}$ is the argmax of these processes for all $ t < \infty$, and $L_t^0(\Delta_{ik}(\cdot) - \Delta_{jl}(\cdot))$ denotes the local time at $0$ of the difference process $(\Delta_{ik} - \Delta_{jl})$ for all $(i,k)$, $(j,l) \in \mathbf{N}^\times \mathbf{N}$. The decay coefficient $\kappa_t>0$ can be taken to be any of
		\begin{equation}\label{eq:possible_rates}
		\kappa_t = \left\{ \begin{array}{l}
		\lambda, \\[10pt]
		\tilde \lambda^{\star} \Big( t, \tanh \Big(\frac{\hilbert (\pi_t, \tilde \pi_t)}{4}\Big) \Big),  \\[10pt]
		\lambda^{\star} \Big( t, \tanh \Big(\frac{\hilbert (\pi_t, \tilde \pi_t)}{4}\Big) \Big),  
		\end{array} \right.
		\end{equation}
		where $\lambda$ is the deterministic rate from Theorem~\ref{thm:contraction}, and $\tilde \lambda^{\star}$ and $\lambda^{\star}$ are the functions defined in Proposition~\ref{prop:pathwise_bound_stability_ode} and Corollary~\ref{cor:pathwise_bound_stability_true_filter} (in \eqref{eq:lambda_star_tilde} and \eqref{eq:lambda_star_true_filter} respectively).
	\end{prop}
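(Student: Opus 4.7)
The plan is to adapt the strategy used in the proof of Theorem~\ref{thm:robustness_misspecified_model_params}, replacing the specific misspecified-parameter dynamics with the generic dynamics of \eqref{eq:approximate_filter} and isolating the three error families $\mathcal{E}^{1,\cdot}$, $\mathcal{E}^{2,\cdot}$, $\mathcal{E}^{3,\cdot}$ exactly where the parameter differences $\delta Q$ and $\tilde h - h$ appeared before. The starting point is It\^o's formula applied to $\log \pi^i_t$ (using \eqref{eq:wonhamNorm}) and to $\log \tilde\pi^i_t$ (using \eqref{eq:approximate_filter}): the latter produces the drift $\tilde f^i_t/\tilde\pi^i_t$ and the diffusion coefficient $\tilde g^i_t/\tilde\pi^i_t$, which we immediately rewrite as $(Q^\top \tilde\pi_t)^i/\tilde\pi^i_t - \mathcal{E}^{1,i}_t$ and $h^i - \mathcal{E}^{3,i}_t$ respectively, with the associated It\^o correction written as $(h^i)^2 - \mathcal{E}^{2,i}_t$. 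Writing $\di Y_t = \di B_t + \pi_t^\top h\, \dt$ and subtracting, the dynamics of $\Delta_{ik}(t)$ split into a \emph{pure Wonham--Wonham} part (identical in form to \eqref{eq:evol_diff_theta_fractions} but with $\tilde \pi$ obeying the Wonham drift) plus explicit drift contributions $\mathcal{E}^{1,i}_t - \mathcal{E}^{1,k}_t$, $\tfrac12 (\mathcal{E}^{2,k}_t - \mathcal{E}^{2,i}_t)$ and $(\mathcal{E}^{3,i}_t - \mathcal{E}^{3,k}_t)\pi_t^\top h\,\dt$, and a stochastic term $(\mathcal{E}^{3,i}_t - \mathcal{E}^{3,k}_t)\,\di B_t$ which survives because we do not assume $\tilde g_t = H\tilde\pi_t$.

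Next, following the proof of Theorem~\ref{thm:robustness_misspecified_model_params} verbatim, I would apply It\^o to $LSE_\alpha(\boldsymbol{\Delta}_\cdot)(t)$, split the resulting expression into the three groups $I_1, I_2, I_3$, and pass to the limit $\alpha\to\infty$. Assumption~\assumptionref{assmp:approx_filter} (together with Lemma~\ref{lemma:non_divergence_to_boundary}) gives local boundedness of all coefficients on any finite interval, which justifies dominated convergence for $I_1$, dominated convergence for stochastic integrals in $I_2$, and the Proposition~\ref{prop:local_time_bound} argument for $I_3$. The limits yield respectively: (i) a drift given by averaging over the argmax set $\mathcal{I}_r$, which we bound by the max over $(i,k)$ (recovering the $\max_{i,k}$ expressions in the statement); (ii) the stochastic integral $\int_s^t |\mathcal{I}_r|^{-1} \sum_{(i,k)\in\mathcal{I}_r} (\mathcal{E}^{3,i}_r - \mathcal{E}^{3,k}_r)\,\di B_r$; and (iii) the local-time terms $\tfrac14 \sum_{(i,k)\neq(j,l)} \int_s^t \di L^0_r(\Delta_{ik}-\Delta_{jl})$.

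The one genuinely new step compared with Theorem~\ref{thm:robustness_misspecified_model_params} is identifying the contraction coefficient $\kappa_t$. The pure Wonham--Wonham part of the drift, namely the contribution from the argmax indices $(i^\star,k^\star)$ of $T_{jk^\star}$ and $T_{ji^\star}$, is exactly what was estimated in the proof of Theorem~\ref{thm:contraction} and in Proposition~\ref{prop:pathwise_bound_stability_ode}. Depending on how aggressively one discards non-positive terms one obtains the bound $-2\lambda \sinh(\hilbert/2)\,\dt$, or the sharper pathwise bounds $-2\tilde\lambda^\star(t,\tanh(\hilbert/4))\sinh(\hilbert/2)\,\dt$ and (by the symmetric argument of Corollary~\ref{cor:pathwise_bound_stability_true_filter}) $-2\lambda^\star(t,\tanh(\hilbert/4))\sinh(\hilbert/2)\,\dt$; these correspond to the three choices of $\kappa_t$ in \eqref{eq:possible_rates}. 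Collecting all terms and invoking \eqref{eq:Delta_equal_H} to identify $\Delta_\infty(t)$ with $\hilbert(\pi_t,\tilde\pi_t)$ gives \eqref{eq:integral_ineq_error}.

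The main obstacle is the bookkeeping in the $LSE_\alpha$ limit: one must carefully check that the new error terms $\mathcal{E}^{2,\cdot}$ and the new stochastic integrand $\mathcal{E}^{3,i} - \mathcal{E}^{3,k}$ are locally bounded so that dominated convergence in $I_1$ and dominated convergence for stochastic integrals in $I_2$ both apply. This is exactly where Assumption~\assumptionref{assmp:approx_filter} is used, and it is the reason the integrability hypothesis on $\tilde f, \tilde g$ divided by $\tilde\pi$ is imposed. With that in place, the identification of the local-time terms via Proposition~\ref{prop:local_time_bound} is unchanged from Theorem~\ref{thm:robustness_misspecified_model_params}, since the $\Delta_{ik}$ remain continuous semimartingales with locally bounded drift and diffusion coefficients.
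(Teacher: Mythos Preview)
Your proposal is correct and follows essentially the same approach as the paper's proof: derive the $\Delta_{ik}$ dynamics by It\^o, split off the Wonham--Wonham drift from the three error families, apply the $LSE_\alpha$ smoothing and pass to the limit via dominated convergence (justified by Assumption~\assumptionref{assmp:approx_filter} and Lemma~\ref{lemma:non_divergence_to_boundary}) and Proposition~\ref{prop:local_time_bound}, then bound the Wonham--Wonham part using the estimates of Theorem~\ref{thm:contraction}, Proposition~\ref{prop:pathwise_bound_stability_ode} and Corollary~\ref{cor:pathwise_bound_stability_true_filter} to obtain the three choices of $\kappa_t$. The only minor point the paper makes explicit that you leave implicit is the appeal to Lemmas~\ref{lemma:argmax} and~\ref{lemma:diff_pi_pi_tilde_jk} to guarantee that, on the argmax set $\mathcal{I}_r$, the terms $T_{ji}(r)$ and $T_{jk}(r)$ have the signs needed to bound the Wonham--Wonham drift by $-2\kappa_r\sinh(\Delta_\infty(r)/2)$; but this is exactly what you mean by ``what was estimated in the proof of Theorem~\ref{thm:contraction}''.
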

	
	\begin{proof}
		Assumption~\assumptionref{assmp:approx_filter} allows us to move our analysis from the simplex to $\R^n$ by defining the usual transformations $\theta_k^i \, : \, (0,1)^{\times 2} \rightarrow \R$. The dynamics of $\tilde \theta_k^i = \log \frac{\tilde \pi_t^i}{\tilde \pi_t^k}$ are given by
		\begin{equation*}
		\di \log \frac{\tilde \pi^i}{\tilde \pi^k}(t)
		= \frac{1}{\tilde \pi_t^i} \big( \tilde f_t^i \dt + \tilde g_t^i \di Y_t \big)
		- \frac{1}{\tilde \pi_t^k} \big( \tilde f_t^k \dt + \tilde g_t^k \di Y_t \big) \\
		+ \frac{1}{2} \Bigg( \bigg( \frac{\tilde g^k_t}{\tilde \pi_t^k}\bigg)^2 - \bigg(  \frac{\tilde g^i_t}{\tilde \pi_t^i} \bigg)^2      \Bigg) \dt,
		\end{equation*}
		so that, letting $\theta_k^i(t) = \log \frac{\pi_t^i}{\pi_t^k}$ and $\Delta_{ik}(t) = \theta_k^i(t) - \tilde \theta_k^i(t)$, defining the innovation process $B_t = Y_t - \int_0^t \pi_s^{\top}h \ds$, and recalling \eqref{eq:evol_theta_ik} for the dynamics of $\theta_k^i(t)$, we have
		\begin{align}\label{eq:delta_diff_approximate_filter}
		\di \Delta_{ik} (t)
		&= \Bigg[ \frac{1}{\pi_t^i} \Big( \sum_{j=0}^n q_{ji} \pi_t^j  \Big) - \frac{\tilde f_t^i}{\tilde \pi_t^i}  \Bigg] \dt
		+ \Bigg[ \frac{\tilde f_t^k}{\tilde \pi_t^k}  - \frac{1}{\pi_t^k} \Big( \sum_{j=0}^n q_{jk} \pi_t^j  \Big) \Bigg] \dt
		\nonumber \\
		&\quad+ \frac{1}{2} \Bigg( (h^k)^2 - (h^i)^2 + \bigg( \frac{\tilde g^i_t}{\tilde \pi_t^i} \bigg)^2 - \bigg( \frac{\tilde g^k_t}{\tilde \pi_t^k}  \bigg)^2  \Bigg) \dt
		+ \Big( h^i - h^k - \frac{\tilde g^i_t}{\tilde \pi_t^i} + \frac{\tilde g^k_t}{\tilde \pi_t^k}   \Big) (\di B_t + \pi_t^{\top}h \dt).
		\end{align}
		This equation might seem a bit daunting at first, but it can be treated exactly as we did in the case of misspecified $Q$ and $h$.
		Adding and subtracting terms as appropriate, we can rewrite \eqref{eq:delta_diff_approximate_filter} as
		\begin{align}\label{eq:delta_diff_approximate_filter_errs}
		\di \Delta_{ik} (t)
		&= \bigg( \sum_{\substack{j = 0 \\ j\neq i}}^{n} q_{ji} \bigg( \frac{\pi_t^j}{\pi_t^i} - \frac{\tilde \pi_t^j}{\tilde \pi_t^i}   \bigg) 
		-\sum_{\substack{j = 0 \\ j\neq k}}^{n} q_{jk} \bigg( \frac{\pi_t^j}{\pi_t^k} - \frac{\tilde \pi_t^j}{\tilde \pi_t^k}   \bigg)  \bigg) \dt 
		+ \big( \mathcal{E}_t^{1,i} - \mathcal{E}_t^{1,k}  \big) \dt \nonumber \\
		&\quad + \frac{1}{2} \big( \mathcal{E}_t^{2,k}   - \mathcal{E}_t^{2,i} \big)\dt
		+ \big( \mathcal{E}_t^{3,i}  - \mathcal{E}_t^{3,k} \big) (\di B_t + \pi_t^{\top}h \dt),
		\end{align}
		where for all $j \in \mathbf{N}$ we have defined the error terms
		\begin{equation}\label{eq:error_terms}
		\mathcal{E}_t^{1, j}  = \Bigg( \sum_{m=0}^{n} q_{mj} \frac{\tilde \pi_t^m}{\tilde \pi_t^j}\Bigg)  - \frac{\tilde f_t^j}{\tilde \pi_t^j}, \qquad
		\mathcal{E}_t^{2, j}  = (h^j)^2 - \bigg( \frac{\tilde g^j_t}{\tilde \pi_t^j}\bigg)^2, \qquad
		\mathcal{E}_t^{3, j}  = h^j - \frac{\tilde g^j_t}{\tilde \pi_t^j}.
		\end{equation}
		
		Now we proceed as in the proof of Theorem~\ref{thm:robustness_misspecified_model_params} by letting $\boldsymbol{\Delta}_t = \{\Delta_{ik}(t)\}_{i \in \mathbf{N}, k \in \mathbf{N}}$ be the family of processes with evolution equations given by \eqref{eq:delta_diff_approximate_filter_errs}, defining the process $LSE_{\alpha}(\Delta_{\cdot})(t)$ and its dynamics, and finally taking $\alpha \rightarrow \infty$ to yield our error estimates.
		
		Letting once more $T_{ik}(t) = \frac{\pi_t^i}{\pi_t^k} - \frac{\tilde \pi_t^i}{\tilde \pi_t^k}$ for all $(i,k) \in \mathbf{N} \times \mathbf{N}$, we have, for all $s \le t < \infty$,
		\begin{align*}
		LSE_{\alpha}&(\boldsymbol{\Delta}_{\cdot})(t)
		= LSE_{\alpha}(\boldsymbol{\Delta}_{\cdot})(s) 
		+ \int_s^t S^{arg}_{\alpha} \Big(\boldsymbol{\Delta}_{\cdot}, \sum_{j\neq i}^{n} q_{ji} T_{ji}(\cdot)
		-\sum_{j\neq k}^{n} q_{jk} T_{jk}(\cdot) \Big)(r) \di r \\
		& + \int_s^t S^{arg}_{\alpha} \big(\boldsymbol{\Delta}_{\cdot}, \mathcal{E}_{\cdot}^{1,i} - \mathcal{E}_{\cdot}^{1,k} \big)(r) \di r
		+ \frac{1}{2} \int_s^t S^{arg}_{\alpha} \big(\boldsymbol{\Delta}_{\cdot}, \mathcal{E}_{\cdot}^{2,k} - \mathcal{E}_{\cdot}^{2,i} \big)(r) \di r \\
		& + \int_s^t S^{arg}_{\alpha} \big(\boldsymbol{\Delta}_{\cdot}, \mathcal{E}_{\cdot}^{3,i} - \mathcal{E}_{\cdot}^{3,k} \big)(r) (\di B_r + \pi_s^{\top}h \di r) \\
		& + \frac{1}{2} \int_s^t  \sum_{(j,l)} \sum_{(u,v) \neq (j,l)} \frac{\alpha e^{ \alpha (\Delta_{jl}(r)+\Delta_{uv}(r))}}{(\sum_{(i,k)} e^{\alpha \Delta_{ik}(r)})^2}  \, \bigg( \big( \mathcal{E}_r^{3,j}  - \mathcal{E}_r^{3,l} \big)^2 - \big( \mathcal{E}_r^{3,j}  - \mathcal{E}_r^{3,l} \big) \big( \mathcal{E}_r^{3,u}  - \mathcal{E}_r^{3,v} \big) \bigg) \di r.
		\end{align*}
		
		Note that by Lemma~\ref{lemma:non_divergence_to_boundary} and Assumption~\assumptionref{assmp:approx_filter}, the first four integrands on the right-hand side have enough regularity to apply dominated convergence for Lebesgue or stochastic integrals when taking the limit as $\alpha \rightarrow \infty$. For the final term, we invoke once more Proposition~\ref{prop:local_time_bound}, which is justified by Assumption~\assumptionref{assmp:approx_filter}, to bound the integral in terms of the local times of the difference processes $\Delta_{ik} - \Delta_{jl}$ as we let $\alpha \rightarrow \infty$. This yields, for all $s \le t < \infty$,
		\begin{align}
		\Delta_{\infty}(t)
		&\le \Delta_{\infty}(s) + \int_s^t \frac{1}{|\mathcal{I}_r|} \sum_{(i,k) \in \mathcal{I}_r} \bigg( \sum_{j\neq i}^{n} q_{ji} T_{ji}(r)
		-\sum_{j\neq k}^{n} q_{jk} T_{jk}(r) \bigg) \di r \nonumber \\
		&\quad+ \int_s^t \max_{i,k} \Big\{ \mathcal{E}_r^{1, i}  - \mathcal{E}_r^{1, k}  +\frac{1}{2} \big(  \mathcal{E}_r^{2, k} - \mathcal{E}_r^{2, i} \big)     \Big\} \di r
		+ \max_j |h^j|  \int_s^t \max_{i,k} \big\{  \mathcal{E}_r^{3, i} - \mathcal{E}_r^{3, k} \big\} \di r \nonumber \\
		&\quad + \int_s^t \frac{1}{|\mathcal{I}_r|}\sum_{(i,k) \in \mathcal{I}_r} \big(\mathcal{E}_r^{3, i} - \mathcal{E}_r^{3, k}\big) \di B_r 
		+ \frac{1}{4}\sum_{(i,k)} \sum_{(j,l) \neq (i,k)} \int_s^t \di L^{0}_r(\Delta_{ik}(\cdot) - \Delta_{jl}(\cdot)), \label{eq:integral_ineq_error_bis}
		\end{align}
		where for all $r \in [s,t]$, we have defined $\mathcal{I}_r = \{ (i,k) \, : \, \Delta_{i k}(r) = \Delta_{\infty}(r)  \} \subset \mathbf{N}\times \mathbf{N}$ to be the argmax of $\boldsymbol{\Delta}_r$, and let $|\mathcal{I}_r|$ denote its size.
		
		Finally, consider the drift terms in the first integral on the right-hand side. By Lemma~\ref{lemma:argmax}, for all $r \in [s,t]$ we have that
		\begin{equation*}
		\Delta_{ik}(r) = \Delta_{\infty}(r) = \hilbert (\pi_r, \tilde \pi_r) = \log \max_{j} \frac{\pi_r^j}{\tilde \pi_r^j} - \log \min_j \frac{\pi_r^j}{\tilde \pi_r^j} = : \log M_r - \log \frac{1}{m_r}, \quad \forall (i,k) \in \mathcal{I}_r,
		\end{equation*}
		where $M_r \ge 1$ and $1/m_r \le 1$ are respectively the pointwise maximum and minimum ratio between the components of $\pi_r$ and $\tilde \pi_r$. By Lemma~\ref{lemma:diff_pi_pi_tilde_jk} we have that, for all $r \in [s,t]$ and for all $(i,k) \in \mathcal{I}_r$, $T_{ji}(r) \le 0$ and $T_{jk}(r) \ge 0$, for all $j \in \mathbf{N}$. Then the first integral on the right-hand side is negative and in particular
		\begin{equation*}
		\int_s^t \hspace{-3pt} \frac{1}{|\mathcal{I}_r|} \hspace{-2pt} \sum_{(i,k) \in \mathcal{I}_r} \hspace{-5pt} \bigg( \sum_{j\neq i}^{n} q_{ji} T_{ji}(r)
		-\sum_{j\neq k}^{n} q_{jk} T_{jk}(r) \bigg) \di r
		\le - \int_s^t \hspace{-5pt} \min_{(i,k) \in \mathcal{I}_r} \hspace{-4pt} \bigg( \sum_{j\neq k}^{n} q_{jk} T_{jk}(r) - \sum_{j\neq i}^{n} q_{ji} T_{ji}(r) \bigg) \di r.
		\end{equation*}
		Now we can minimize the integrand with algebraic calculations as in the proof of Theorem~\ref{thm:contraction}, or Proposition~\ref{prop:pathwise_bound_stability_ode}, or Corollary~\ref{cor:pathwise_bound_stability_true_filter}, which yields
		\begin{align*}
		\int_s^t \frac{1}{|\mathcal{I}_r|} \sum_{(i,k) \in \mathcal{I}_r} \bigg( \sum_{j\neq i}^{n} q_{ji} T_{ji}(r)
		-\sum_{j\neq k}^{n} q_{jk} T_{jk}(r) \bigg) \di r
		\le - 2 \int_s^t \kappa_r \sinh \bigg( \frac{ \Delta_{\infty}(r)}{2}\bigg) \di r,
		\end{align*}
		where the decay rate $\kappa_r$ can be chosen to be any of the coefficients $\lambda$ from Theorem~\ref{thm:contraction}, $\tilde \lambda^{\star}(t, \tanh (\Delta_{\infty}(t)/4))$ from \eqref{eq:lambda_star_tilde} in Proposition~\ref{prop:pathwise_bound_stability_ode}, or $\lambda^{\star}(t, \tanh (\Delta_{\infty}(t)/4))$ from \eqref{eq:lambda_star_true_filter} in Corollary~\ref{cor:pathwise_bound_stability_true_filter}.
	\end{proof}

	Theorem~\ref{thm:expected_hilbert_bounds} and Theorem~\ref{thm:pathwise_decay_approx_error} now follow easily from the above proposition.
	
	\begin{proof}[Proof of Theorem~\ref{thm:expected_hilbert_bounds}]
		Start from \eqref{eq:integral_ineq_error}. For all $t < \infty$, we bound $\kappa_t$ from below by the deterministic rate $\lambda = 2 \min_{i \neq k} \sqrt{q_{ik}q_{ki}}$. Moreover, recall that $2 \sinh (x/2) \ge x$ for $x \ge 0$. Substitute both these bound in the first integral in the right-hand side of \eqref{eq:integral_ineq_error}.
		We take expectation and note that the stochastic integral vanishes, since it is a martingale (as the integrand is locally $L^2$-integrable by assumption~\assumptionref{assmp:approx_filter}). A modification of the standard Gr\"onwall argument to deal with Lebesgue--Stieltjes measures (as in the proof of Theorem~\ref{thm:robustness_misspecified_model_params}) concludes the proof.
	\end{proof}
	
	\begin{proof}[Proof of Theorem~\ref{thm:pathwise_decay_approx_error}]
		If $\mathcal{E}^{3,i}_t = 0$ for all $i \in \mathbf{N}$ and $t < \infty$, then $\mathcal{E}^{2,i}_t = 0$ as well.
		Then \eqref{eq:delta_diff_approximate_filter_errs} reduces to
		\begin{equation*}
		\di \Delta_{ik} (t)
		= \bigg( \sum_{\substack{j = 0 \\ j\neq i}}^{n} q_{ji} \bigg( \frac{\pi_t^j}{\pi_t^i} - \frac{\tilde \pi_t^j}{\tilde \pi_t^i}   \bigg) 
		-\sum_{\substack{j = 0 \\ j\neq k}}^{n} q_{jk} \bigg( \frac{\pi_t^j}{\pi_t^k} - \frac{\tilde \pi_t^j}{\tilde \pi_t^k}   \bigg)  \bigg) \dt 
		+ \big( \mathcal{E}_t^{1,i} - \mathcal{E}_t^{1,k}  \big) \dt,
		\end{equation*}
		for all $(i,k) \in \mathbf{N} \times \mathbf{N}$, so we recover $C^1$ dynamics for the difference processes $\Delta_{ik}(t)$. A $C^1$ process does not generate local time, so \eqref{eq:integral_ineq_error} simplifies to
		\begin{equation*}
		\Delta_{\infty}(t)
		\le \Delta_{\infty}(s) - 2 \int_s^t \kappa_r \sinh \bigg( \frac{ \Delta_{\infty}(r)}{2}\bigg) \di r
		+ \int_s^t \max_{i,k} \big\{ \mathcal{E}_r^{1, i}  - \mathcal{E}_r^{1, k}\big\} \di r,
		\end{equation*}
		for all $s \le t$, where $\kappa_t>0$ is the coefficient given by any of the rates in \eqref{eq:possible_rates}. Now the second part of the theorem follows easily, by first bounding $\kappa_r$ from below by a positive (measurable) process $ \gamma_t$ given by one of
		\begin{equation*}
		\gamma_t = \left\{ \begin{array}{l}
		\lambda, \\[10pt]
		\tilde \lambda^{\star}_t \le \tilde \lambda^{\star} \Big( t, \tanh \Big(\frac{\hilbert (\pi_t, \tilde \pi_t)}{4}\Big) \Big), \\[10pt]
		\lambda^{\star}_t \le \lambda^{\star} \Big( t, \tanh \Big(\frac{\hilbert (\pi_t, \tilde \pi_t)}{4}\Big) \Big), 
		\end{array} \right.
		\end{equation*}
		where $\tilde \lambda^{\star}_t$ and $\lambda^{\star}_t$ are as in Proposition~\ref{prop:pathwise_bound_stability_ode} and Corollary~\ref{cor:pathwise_bound_stability_true_filter}. Then, recalling once more that $2\sinh (x/2) \ge x$ for $x \ge 0$, the usual Gr\"onwall argument yields
		\begin{equation*}
		\Delta_{\infty}(t) \le \Delta_{\infty}(0) e^{- \int_0^t \gamma_s \ds}
		+ \int_0^t e^{- \int_s^t \gamma_r \di r} \max_{i,k}  \big\{ \mathcal{E}_s^{1,i}  - \mathcal{E}_s^{1,k} \big\} \ds,
		\end{equation*}
		which is \eqref{eq:hilbert_pathwise_approx_bound} for $\gamma_t = \tilde \lambda^{\star}_t$.

		We now look for a tighter bound. Consider the process $X_t = \tanh \big( \Delta_{\infty}(t)/4\big)$. Applying the chain rule we have
		\begin{align*}
		\di X_t 
		= \frac{1}{4} \cosh^{-2} \bigg( \frac{\Delta_{\infty}(t)}{4}  \bigg) \di \Delta_{\infty}(t)
		&\le - \kappa_t \frac{\sinh \big( \frac{\Delta_{\infty}(t)}{2}  \big)}{2 \cosh^2 \big( \frac{\Delta_{\infty}(t)}{4}  \big)} \dt + \frac{1}{4} \frac{\max_{i,k} \big\{ \mathcal{E}_r^{1, i}  - \mathcal{E}_r^{1, k}\big\}}{\cosh^2 \big( \frac{\Delta_{\infty}(t)}{4}  \big)} \\
		&\le - \kappa_t X_t \dt + \frac{1}{2} \max_{i,k} \big\{ \mathcal{E}_t^{1, i}  - \mathcal{E}_t^{1, k}\big\}\frac{X_t}{\sinh \big( 2 \arctanh(X_t) \big)} \dt,
		\end{align*}
		where we have used the identity $\sinh (2 x) = 2 \sinh(x) \cosh(x)$. Since $\sinh ( 2 \arctanh(x)) = \frac{2x}{1-x^2}$, we can rewrite the above as
		\begin{equation*}
		\di X_t  \le \alpha(t, X_t) \dt , \quad \textrm{where} \quad \alpha(t, X_t) = -  \tilde \lambda^{\star}(t, X_t) X_t + \frac{1}{4} \max_{i,k} \big\{ \mathcal{E}_t^{1, i}  -\mathcal{E}_t^{1, k}\big\} \big( 1 - X_t^2 \big),
		\end{equation*}
		where we have substituted $\tilde \lambda^{\star}(t, X_t)$ for $\kappa_t$ for clarity in the exposition below (but the arguments are analogous whether $\kappa_t$ is the deterministic rate $\lambda$ from Theorem~\ref{thm:contraction}, the pathwise rate $\tilde \lambda^{\star}_t$ from Proposition~\ref{prop:pathwise_bound_stability_ode}, or the coefficient $\lambda^{\star}(t, X_t)$ or the pathwise rate $\lambda^{\star}_t$ from Corollary~\ref{cor:pathwise_bound_stability_true_filter}). Bounding $\tilde \lambda^{\star}(t, X_t)$ from below by $\tilde \lambda^{\star}_t$, and $(1- X_t^2)$ from above by 1, another application of Gr\"onwall yields \eqref{eq:tanh_pathwise_approx_bound}.
		
		We recall \eqref{eq:lambda_star_tilde} for the definition of $\tilde \lambda^{\star}$. Note that the mapping $x \mapsto  \alpha(t,x)$ is locally Lipschitz continuous (with Lipschitz constant dependent on $\omega, t$ and $x$), since $x \mapsto \tilde \lambda^{\star}(t, x)x$ is locally Lipschitz continuous and $\max_{i,k} \big\{ \mathcal{E}_t^{1, i}  - \mathcal{E}_t^{1, k}\big\}$ is locally bounded by Lemma~\ref{lemma:non_divergence_to_boundary} and Assumption~\assumptionref{assmp:approx_filter}.
		Now let $u_t$ be the solution to the ODE with random coefficients given by
		\begin{equation}\label{eq:ode_u_error}
		\frac{\di u_t}{\dt} = \alpha(t, u_t), \qquad
		u_0 = X_0 = \tanh \bigg(\frac{\Delta_{\infty}(0)}{4}\bigg),
		\end{equation}
		where $\alpha$, or, specifically, $\tilde \lambda^{\star}$ and $\max_{i,k} \big\{ \mathcal{E}_t^{1, i}  - \mathcal{E}_t^{1, k}\big\}$ depend on the process $\tilde \pi_t$, which is fixed for each $\omega$. Recall that, since $\mu, \nu \in \mathring{\mathcal{S}}^n$ by assumption, $\hilbert (\mu, \nu) < \infty$, and therefore $u_0 \in (0,1)$. Since the right-hand side is locally Lipschitz, \eqref{eq:ode_u_error} has a unique solution $u_t$ up to its first explosion time $T > 0$ (again, see e.g.~\cite[Theorem~2.5]{teschl_odes}). Now, if $T < \infty$, then $T$ is the first time such that $u_{T-}\hspace{-4pt} =1$. By continuity of $u_t$, for all $\varepsilon > 0$ there exists a $\delta > 0$ such that for $s \in (T - \delta, T)$, we have $u_s \in (1-\varepsilon, 1)$. Then
		\begin{align*}
			1
			&= u_{T - \delta} + \int_{T - \delta}^{T} \alpha(s, u_s) \ds	\\
			&\le u_{T - \delta} - \frac{\delta(2- \varepsilon)(1-\varepsilon)}{\varepsilon} \inf_{s \in [T - \delta, T]} \min_{i \neq k} \bigg\{ q_{ik} \frac{\tilde \pi_s^i}{\tilde \pi_s^k} \bigg\}
			+ \frac{\delta \varepsilon(2-\varepsilon)}{4} \sup_{s \in [T - \delta, T]} \max_{i,k} \big\{ \mathcal{E}_s^{1, i}  - \mathcal{E}_s^{1, k}\big\},
		\end{align*}
		using (strict) positivity of $\lambda^{\star}$ and $\max_{i,k} \big\{ \mathcal{E}_t^{1, i}  - \mathcal{E}_t^{1, k}\big\}$, and that $\frac{1 + u_t}{1-u_t} \ge \frac{(2- \varepsilon)(1-\varepsilon)}{\varepsilon}$ and $(1-u_s^2) \le \varepsilon(2-\varepsilon)$ for $s \in (T - \delta, T)$. Since, for small enough $\varepsilon$, the negative term dominates the positive term, this implies $1 < u_{T-\delta}$, which is strictly less than $1$, and therefore a contradiction. Then $T = \infty$ and \eqref{eq:ode_u_error} has a unique solution for all $t \ge 0$.

		A similar argument proves that $u_t > 0$ for all $t \ge 0$, and finally Lemma~\ref{lemma:comparison} yields the theorem.
	\end{proof}

	\begin{proof}[Proof of Corollary~\ref{cor:exchanging_pi_with_pi_tilde}]
	Analogous to the proof of Theorem~\ref{thm:pathwise_decay_approx_error}.
	\end{proof}
	
	\subsubsection{A numerical example}\label{sec:numerics_approx}
	We conclude this section, and the paper, by testing our bounds in a couple of simulations.
	
	We consider a Wonham filter with approximate model parameters, whose dynamics are given by \eqref{eq:wonham_wrong_params}. We assume $h$ to be known, so $\tilde h = h$. We approximate $Q$ by applying a non-negative factorization algorithm: we subtract the diagonal from $Q$, approximate the resulting positive matrix using the \texttt{NMF} class from the python package \texttt{sklearn.decomposition}, and reconstruct the diagonal to ensure all the rows sum to 0 to yield $\tilde Q$. In this setting, there is no error due to the misspecification of $h$, so we do not have to worry with estimating the local time terms. We consider the error bounds given in Theorem~\ref{thm:pathwise_decay_approx_error}.
	
	In the figure below we compare this approximate filter with the Wonham filter for a 3-state and a 6-state Markov chain. In each case, we take the $Q$ matrix to be given by
	\begin{equation*}
	Q = \left( \begin{array}{ccc}
	-3 & 1 & 2 \\
	1 & -3 & 2 \\
	1.5 & 1.5 & -3
	\end{array}
	\right),
	\qquad
	Q = \left( \begin{array}{cccccc}
	-9 & 3 & 1 & 1.5 & 2.5 & 1 \\
	1 & -7.5 & 1 & 2 & 2.3 & 1.2 \\
	3 & 2 & -8 & 1& 1 & 1 \\
	2 & 1.3 & 1 & -6 & 0.7 & 1 \\
	1.1 & 1 & 0.9 & 3 & -9 & 3 \\
	1 & 1 & 3& 2 & 2.5 & -9.5
	\end{array}
	\right),
	\end{equation*}
	and the sensor function $h$ to be
	\begin{equation*}
	h = \left( -1, 0, 1 \right), \qquad
	h = \left(-3, -2, -1, 1, 2, 3 \right).
	\end{equation*}
	For the 3-state Markov chain, we take the initial law of the signal $X$ to be given by its ergodic distribution, i.e. $\mathrm{law} (X_0) = \big(0.3, 0.3, 0.4 \big)$. This is also the initial condition for the Wonham filter $\pi_t$. The approximate rate matrix $\tilde Q$ for the approximate filter $\tilde \pi_t$ is obtained using a 2-channel NMF approximation of $Q$. We take the initial condition for $\tilde \pi_t$ to be $\tilde \pi_0 = \big( 0.2, 0.2, 0.6 \big)$. In the 6-state case, we start the signal $X$ quite close to the boundary of $\mathcal{S}^5$, with its initial law given by $\mu = \big( 0.5, 0.04, 0.09, 0.2, 0.04, 0.13   \big)$, which is also the initial condition for $\pi_t$. For the rate matrix $\tilde Q$ for $\tilde \pi_t$ we use a 4-channel NMF approximation of $Q$. We start $\tilde \pi_t$ also relatively close to the boundary of $\mathcal{S}^5$, but near a different edge from $\mu$, and take $\tilde \pi_0 = \big( 0.25, 0.1, 0.06, 0.07, 0.22, 0.3 \big)$.
	
	For transparency, we write here the matrices $\tilde Q$ (rounded to the second significant digit) resulting from the NMF approximation in each case:
	\begin{equation*}
		\tilde Q \approx \left( \begin{array}{ccc}
		-2.5 & 0.5 & 2 \\
		0.5 & -2.5 & 2 \\
		1.5 & 1.5 & -3
		\end{array}
		\right),
		\qquad
		\tilde Q \approx \left( \begin{array}{cccccc}
		-9 & 3.04 & 1.04 & 1.54 & 2.43 & 0.95 \\
		0.94 & -7.25 & 1.70 & 2.02 & 1.58 & 1.01 \\
		2.92 & 2.05 & -7.8 & 0.69 & 0.88 & 1.26 \\
		2.11 & 1.24 & 0.52 & -5.34 & 0.84 & 0.62 \\
		1.13 & 0.86 & 0.63 & 3.02 & -8.68 & 3.04 \\
		1.02 & 0.77 & 2.64 & 1.92 & 2.92 & -9.28
		\end{array}
		\right).
		\end{equation*}

	In Figure~\ref{fig:approx_filters}, on the left, both for the 3-state and the 6-state nonlinear filter, we plot 100 realizations of the Hilbert error between $\pi_t$ and $\tilde \pi_t$ (and their sample mean) in blue, and of the error bounds from Theorem~\ref{thm:pathwise_decay_approx_error} (and their sample means). Since these bounds are path-by-path, each realization of the error between $\pi_t$ and $\tilde \pi_t$ has three corresponding error bounds: in fuchsia we plot $4 \arctanh (u_t)$, where $u_t$ is the (numerical) solution to the ODE \eqref{eq:thm_approx_pathwise_ode}; in green we plot the bound \eqref{eq:hilbert_pathwise_approx_bound} where the decay rate is given by $\tilde \lambda_t$ as defined in \eqref{eq:thm_approx_pathwise_rate}; in red we plot again the bound \eqref{eq:hilbert_pathwise_approx_bound}, but using the deterministic decay rate $\lambda$ from Theorem~\ref{thm:contraction} instead. The error terms \eqref{eq:error_terms_approx} are evaluated pathwise at each time-step. In the pictures on the right, for the same simulations, we plot 100 realizations of $\tanh (\hilbert(\pi_t, \tilde \pi_t)/4)$ (blue), and of the numerical solution $u_t$ to \eqref{eq:thm_approx_pathwise_ode} (fuchsia). 
	
	\begin{figure}[htp]
		\centering
		\includegraphics[width=0.49\textwidth]{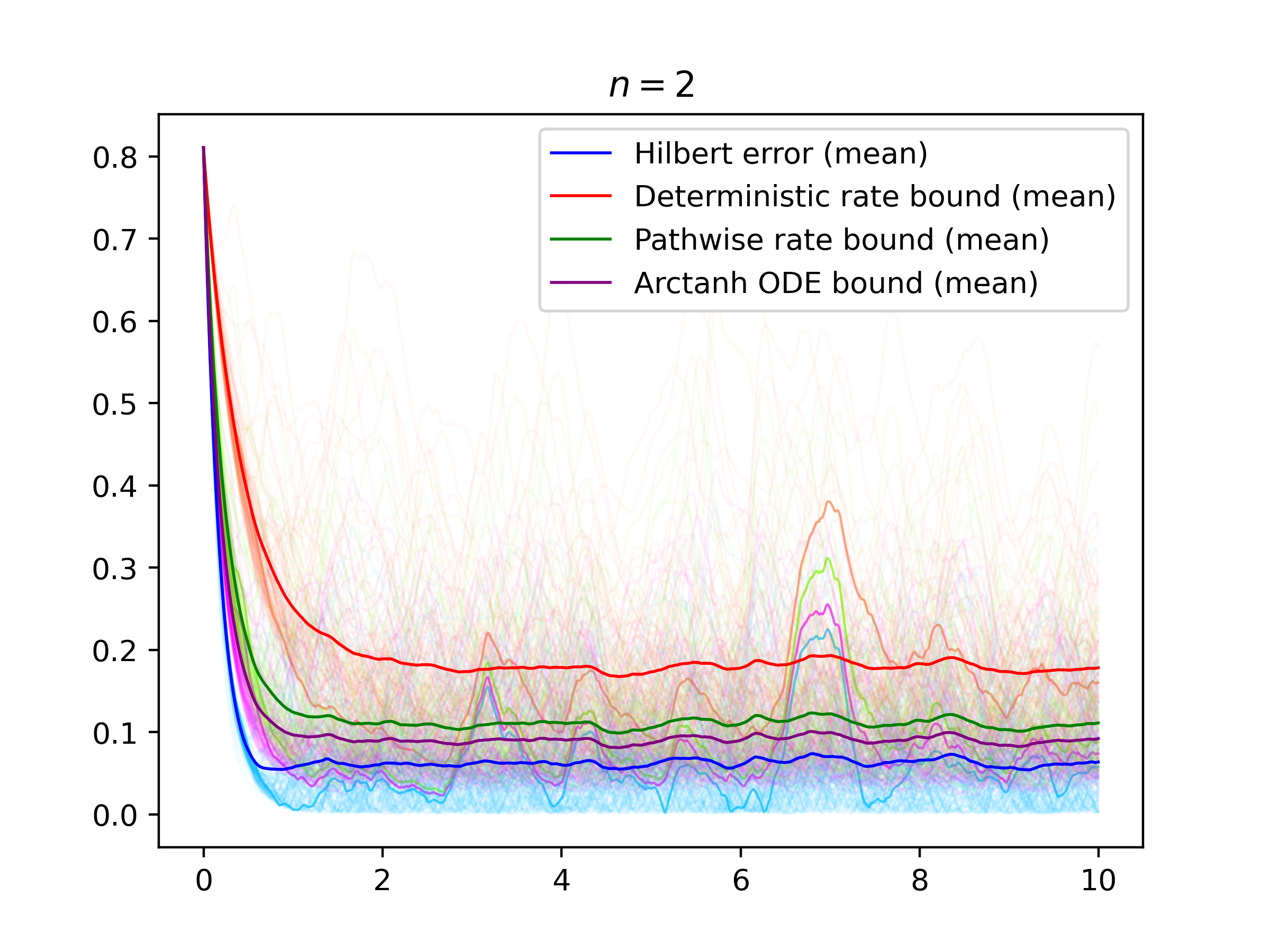}
		\includegraphics[width=0.49\textwidth]{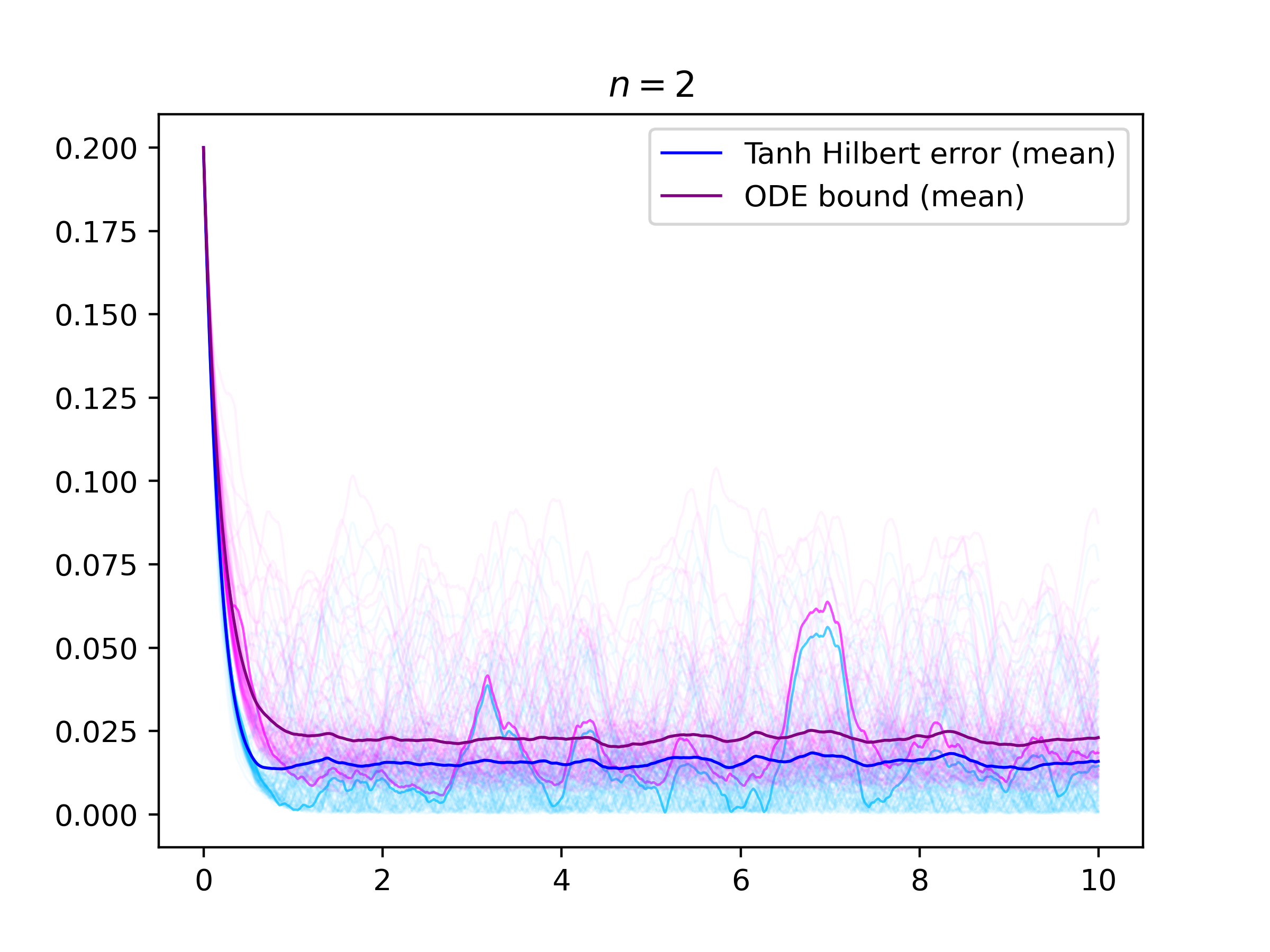}
		\includegraphics[width=0.49\textwidth]{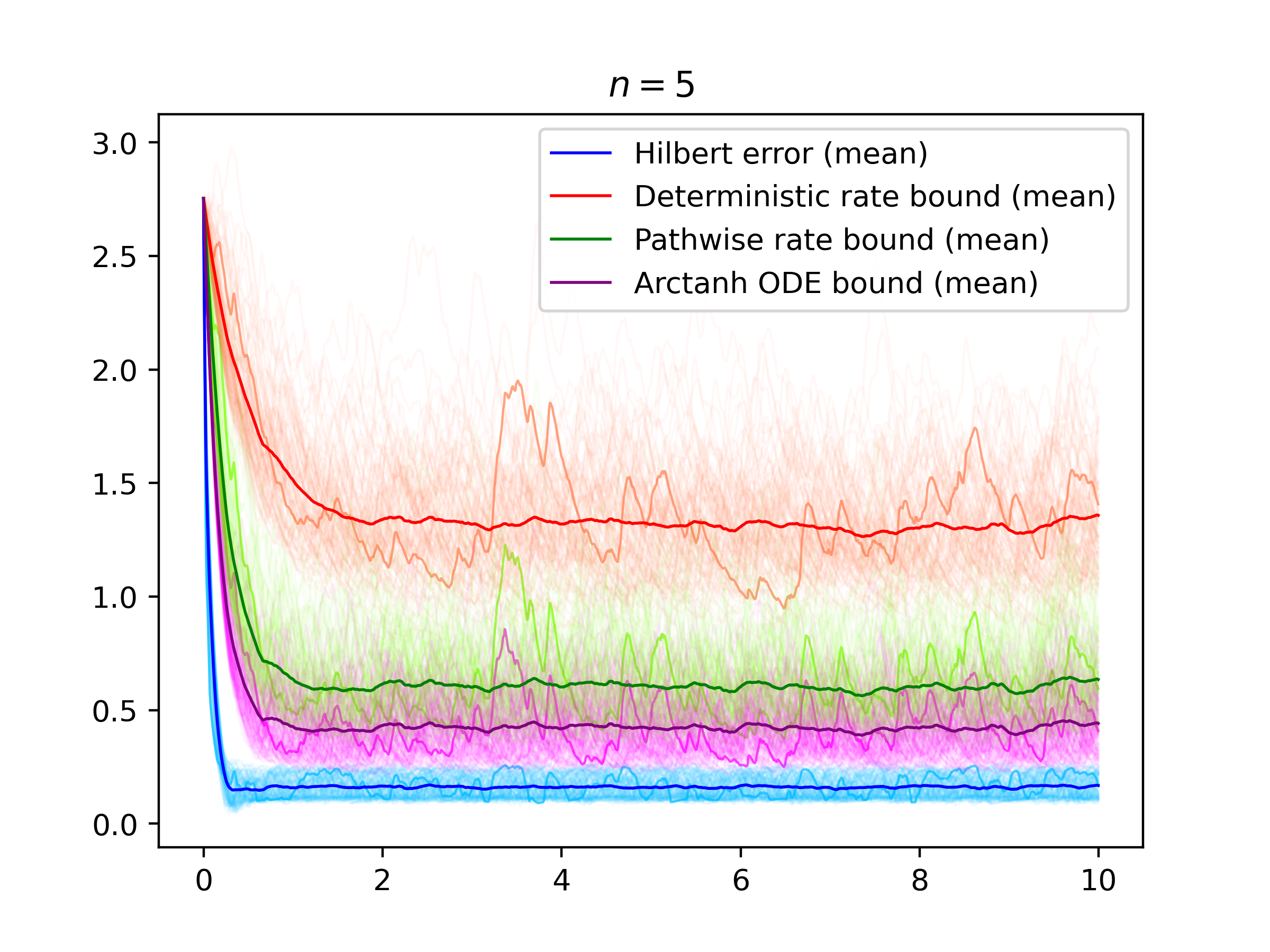}
		\includegraphics[width=0.49\textwidth]{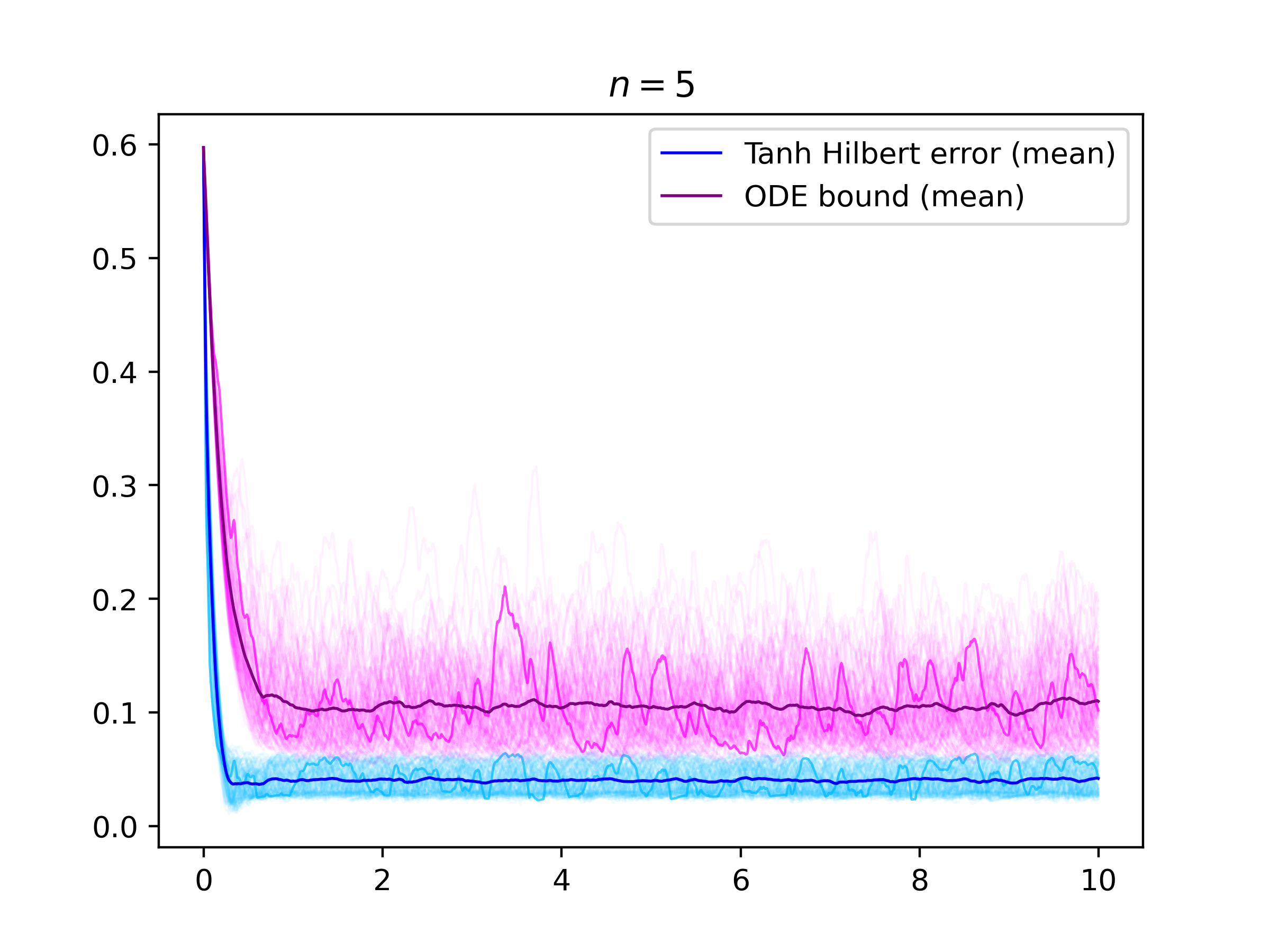} 
		\caption{For dimensions $n=2, 5$, we test our error bounds from Theorem~\ref{thm:pathwise_decay_approx_error} against the actual Hilbert error between the Wonham filter and an approximate filter. On the right we plot 100 realizations of the Hilbert projective error $\hilbert (\pi_t, \tilde \pi_t)$ (faded, light blue), of the ODE bound given by $4 \arctanh (u_t)$, where $u_t$ solves \eqref{eq:thm_approx_pathwise_ode} (faded, fuchsia), of the pathwise bound \eqref{eq:hilbert_pathwise_approx_bound} with pathwise decay rate $\tilde \lambda_t$ (faded, light green), and of the pathwise bound with deterministic decay rate $\lambda$ (faded, orange), for $t \in [0,10]$. We highlight one sample path of the Hilbert error at random, together with its three corresponding pathwise bounds. In blue, purple, green and red we plot the sample means of the errors and of the three bounds. On the right, for the same simulations, we plot 100 realizations of the quantity $\tanh(\hilbert(\pi_t, \tilde \pi_t)/4)$ together with the ODE bound $u_t$, and the sample means of both. Again, we highlight at random one realization of the $\tanh$ error and its corresponding ODE bound.}
		\label{fig:approx_filters}
	\end{figure}

	In the 3-state case, where the filter lives in $\mathcal{S}^2$, we can see that our estimates for the error are very close to its actual value (the ODE bounds given by the solution to \eqref{eq:thm_approx_pathwise_ode} in particular). In the 6-state case, with a 5-dimensional filter, our error bounds are less sharp. In fact, it is safe to assume that our error bounds get progressively worse as we increase the dimension of the state-space.
	
	Why is this the case? As we already mentioned in Section~\ref{sec:optimality_rate}, the main issue with our error bounds is the contraction rate. Our numerical experiments for the stability estimates (see Figure~\ref{fig:hilber_error_2d_100d}) show that the error contracts at a much faster rate than what we can prove. This makes sense, since by minimizing over all entries of $Q$, the decay rates in Theorem~\ref{thm:contraction} and Proposition~\ref{prop:pathwise_bound_stability_ode} give pathwise bounds for `worst case'-type of scenarios. A similar argument applies to our treatment of the approximation-error terms \eqref{eq:error_terms_approx}: to ensure our bounds hold, we need to maximize over all possible indices, and this implies that we are more and more likely to overestimate the errors as the dimension increases.
	
	There are a couple of directions that one could pursue at this point, to tighten our error estimates. The first would be to try to exploit some averaging over the indices, instead of simply minimizing/maximizing over them, to yield tighter decay rates/error terms. This could potentially be achieved if one looked for a bound in expectation instead of pathwise. This problem seems difficult, however, as it involves estimating the expectation of the $\argmin$ and $\argmax$ of the log differences between the ratios of $\pi_t$ and $\tilde \pi_t$. On the other hand, if, instead of proceeding analytically, one were able to estimate quantities numerically, it should be relatively easy to obtain good numerical estimates for the decay rate of the stability error in high dimensions, as we can see from the plots in Figure~\ref{fig:hilber_error_2d_100d}. The estimated rate can then be substituted into \eqref{eq:hilbert_pathwise_approx_bound} to yield tighter bounds for the error of approximate filters in high dimensions (which should hold with high probability). The issue of overestimating the error terms remains, but, since they are dominated by the negative exponentials, tightening the decay rate would yield a significant overall improvement.
	
	Numerical estimation of the decay rate opens up other possibilities as well. The arguments we developed in this paper work when the signal is given by any ergodic time-continuous Markov chain -- the strict positivity of the off-diagonal entries of $Q$ is only required to guarantee that the decay rates we derive are nonzero. In other words, the stability error of the Wonham filter decays as long as the signal is ergodic (as discussed by \cite{bax-chiga-lip04}) even when the $Q$-matrix is sparse. By discretization, the nonlinear filter on a compact state space, given by the solution to the Kushner--Stratonovich SPDE, is often approximated by a Wonham filter on a high number of states. However, the diffusion operator corresponds to a very sparse transition matrix. Given a numerical estimate for the decay rate of the Hilbert error of the discretized diffusion operator, one could then use our error estimates to understand the error of approximate filters in infinite dimensions.

	\paragraph{Acknowledgements} The research of EF was supported by the EPSRC under the award EP/L015811/1. SC acknowledges the support of the UKRI Prosperity Partnership Scheme (FAIR) under EPSRC Grant EP/V056883/1, the Alan Turing Institute and the Office for National Statistics (ONS), and the Oxford--Man Institute for Quantitative Finance.
	
	\newpage
	\appendix
	
	\section{The maximum process of a family of semimartingales}\label{app:smooth_max}
	
	In this appendix we use an appropriate smooth approximation to study the dynamics of the maximum of a family of continuous stochastic processes driven by a common Brownian motion.
	
	Recall the following smooth approximations of the maximum and the argmax.
	Let $\alpha \in (0, \infty)$ and let $\mathbf{x} = \{x_i\}_{i=0}^n$ be a sequence of real numbers.
	We define the \textit{LogSumExp} function $LSE_{\alpha}(\mathbf{x})$ as
	\begin{equation*}
	LSE_{\alpha}(\mathbf{x}) = \frac{1}{\alpha} \log \, \sum_k e^{\alpha x_k},
	\end{equation*}
	and the \textit{SmoothMax} function $S_{\alpha}(\mathbf{x})$ as
	\begin{equation*}
	S_{\alpha}(\mathbf{x}) = \frac{ \sum_{j} x_j e^{\alpha x_j}}{\sum_k e^{\alpha x_k} }.
	\end{equation*}
	
	Given a family $\mathbf{c} = \{c_i\}_{i=0}^n$ of real-valued coefficients, we also define the \textit{SoftArgMax} (or \textit{SoftMax}) function $S^{arg}_{\alpha}(\mathbf{x}, \mathbf{c})$ as
	\begin{equation*}
	S^{arg}_{\alpha}(\mathbf{x}, \mathbf{c}) = \frac{ \sum_{j} c_j e^{\alpha x_j}}{\sum_k e^{\alpha x_k} }.
	\end{equation*}
	
	We start by proving a few simple lemmata.
	
	\begin{notation}
		Let $\mathcal{I}$ be the argmax of $\mathbf{x}$, i.e. $\mathcal{I} : = \{ j \in \mathbf{N} \: : \:  x_j = \max_{i \in \mathbf{N}} x_i   \} \subset \mathbf{N}$.
	\end{notation}
	
	\begin{lemma}[Convergence to maximum]\label{lemma:convergence_to_max}
		\begin{equation*}
		\lim_{\alpha \rightarrow \infty} LSE_{\alpha} (\mathbf{x}) = \lim_{\alpha \rightarrow \infty} S_{\alpha}(\mathbf{x}) = \max_{i \in \{0, \dots, n\}} x_i
		\end{equation*}
	\end{lemma}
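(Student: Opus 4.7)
The plan is to factor out the maximum from both sums. Let $M=\max_{i\in\mathbf{N}} x_i$ and $\mathcal{I}=\{j\in\mathbf{N}:x_j=M\}$, noting $|\mathcal{I}|\ge 1$ and $x_k-M\le 0$ for all $k$, with equality iff $k\in\mathcal{I}$.

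For $LSE_\alpha(\mathbf{x})$, I would write
\[
LSE_\alpha(\mathbf{x})=\frac{1}{\alpha}\log\Bigl(e^{\alpha M}\sum_{k}e^{\alpha(x_k-M)}\Bigr)=M+\frac{1}{\alpha}\log\sum_{k}e^{\alpha(x_k-M)}.
\]
As $\alpha\to\infty$, each term $e^{\alpha(x_k-M)}$ tends to $1$ if $k\in\mathcal{I}$ and to $0$ otherwise, so the sum converges to $|\mathcal{I}|\in[1,n+1]$. Hence $\alpha^{-1}\log\sum_k e^{\alpha(x_k-M)}\to 0$, and $LSE_\alpha(\mathbf{x})\to M$.

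For $S_\alpha(\mathbf{x})$, the plan is to rewrite it as a weighted average and show the weights concentrate on $\mathcal{I}$:
\[
S_\alpha(\mathbf{x})=\sum_{j}x_j\,w_j(\alpha),\qquad w_j(\alpha):=\frac{e^{\alpha(x_j-M)}}{\sum_{k}e^{\alpha(x_k-M)}}.
\]
By the same calculation as above, the denominator converges to $|\mathcal{I}|$ while the numerator of $w_j$ converges to $1$ for $j\in\mathcal{I}$ and to $0$ otherwise; hence $w_j(\alpha)\to|\mathcal{I}|^{-1}\mathbf{1}_{\{j\in\mathcal{I}\}}$. Since $x_j=M$ for every $j\in\mathcal{I}$, this gives $S_\alpha(\mathbf{x})\to\sum_{j\in\mathcal{I}}M/|\mathcal{I}|=M$.

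There is no real obstacle here; the argument is elementary, and the only mild care needed is to pull out $e^{\alpha M}$ to avoid issues with the possibly unbounded exponentials before taking $\alpha\to\infty$. The $SoftArgMax$ convergence used later in the paper (giving $|\mathcal{I}|^{-1}\sum_{j\in\mathcal{I}}c_j$) is an immediate corollary of the weight-concentration computation for $w_j(\alpha)$.
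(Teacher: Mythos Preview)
Your proposal is correct and follows essentially the same approach as the paper: both arguments factor out the maximum and exploit that $e^{\alpha(x_k-M)}\to \mathbf{1}_{\{k\in\mathcal{I}\}}$. The only cosmetic difference is that for $LSE_\alpha$ the paper uses the sandwich $M\le LSE_\alpha(\mathbf{x})\le M+\alpha^{-1}\log(n+1)$ rather than computing the exact limit of the log term, and for $S_\alpha$ it splits the sum explicitly over $\mathcal{I}$ and its complement rather than phrasing things via the weights $w_j(\alpha)$; the underlying computations are identical.
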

	\begin{proof}
		Let $M = \max_{i \in \{0, \dots, n\}} x_i$. We have that
		\begin{equation*}
		M=\frac{1}{\alpha} \log e^{\alpha M} \le LSE_{\alpha}(\mathbf{x}) \le \frac{1}{\alpha} \log \, \big( (n+1) e^{\alpha M} \big) = \frac{\log(n+1)}{\alpha} + M,
		\end{equation*}
		and taking the limit as $\alpha \rightarrow \infty$ yields the result. For the \textit{SmoothMax} function, consider $\mathcal{I}$, the argmax of $\mathbf{x}$, and let $|\mathcal{I}| = d \ge 1$ be its size. Then
		\begin{align*}
		S_{\alpha}(\mathbf{x})
		&= \sum_{j \in \mathcal{I}}\frac{ x_j}{d + \sum_{k \notin \mathcal{I}} e^{\alpha (x_k - x_j)} } + \sum_{j \notin \mathcal{I}}\frac{ x_j}{1 + \sum_{k \neq j} e^{\alpha (x_k - x_j)} } \\
		&= \frac{ dM}{d + \sum_{k \notin \mathcal{I}} e^{- \alpha (M - x_k)} }
		+ \sum_{j \notin \mathcal{I}}\frac{ x_j}{1 + \sum_{\subalign{k &\neq j \\ k &\in \mathcal{I}}} e^{\alpha (M - x_k)}  + \sum_{\subalign{k &\neq j \\ k &\notin \mathcal{I}}} e^{\alpha (x_k - x_j)}}
		\xrightarrow{\alpha \rightarrow \infty} M.
		\end{align*}
	\end{proof}
	\begin{lemma}\label{lemma:smooth_arg_max}
		Let $\mathcal{I}$ be the argmax of $\mathbf{x}$ and $|\mathcal{I}| = d \ge 1$ be its size. Then
		\begin{equation*}
		\lim_{\alpha \rightarrow \infty} S^{arg}_{\alpha}(\mathbf{x}, \mathbf{c}) = \frac{1}{d} \sum_{j \in \mathcal{I}} c_j.
		\end{equation*}
	\end{lemma}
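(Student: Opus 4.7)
The plan is to prove this by the standard trick of renormalizing both numerator and denominator by $e^{\alpha M}$, where $M := \max_{i \in \mathbf{N}} x_i$. Concretely, I would rewrite
\begin{equation*}
S^{arg}_{\alpha}(\mathbf{x}, \mathbf{c}) = \frac{\sum_{j} c_j e^{\alpha x_j}}{\sum_k e^{\alpha x_k}} = \frac{\sum_{j} c_j e^{\alpha (x_j - M)}}{\sum_k e^{\alpha (x_k - M)}},
\end{equation*}
and then split each sum according to whether the index belongs to $\mathcal{I}$ or not. For $j \in \mathcal{I}$ we have $x_j - M = 0$ so $e^{\alpha(x_j-M)} = 1$ identically in $\alpha$, while for $j \notin \mathcal{I}$ we have $x_j - M < 0$ so $e^{\alpha(x_j-M)} \to 0$ as $\alpha \to \infty$.

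Next, I would take the limit termwise in both the numerator and denominator (this is a finite sum, so no dominated convergence technicalities are needed). The numerator converges to $\sum_{j \in \mathcal{I}} c_j$, and the denominator converges to $|\mathcal{I}| = d$. Since $d \ge 1$, the denominator is bounded away from zero uniformly in $\alpha$ (in fact it is bounded below by $d$ for every $\alpha$), so the quotient converges to the quotient of the limits, which is $\tfrac{1}{d}\sum_{j \in \mathcal{I}} c_j$ as claimed.

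There is no real obstacle here; the statement is essentially a one-line observation once the normalization by $e^{\alpha M}$ is made. The only point that needs any care is pointing out that the decay $e^{\alpha(x_j - M)} \to 0$ for $j \notin \mathcal{I}$ is strict and uniform on the (finite) index set, which ensures we may pass to the limit in the finite sum without further justification.
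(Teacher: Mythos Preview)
Your proposal is correct and takes essentially the same approach as the paper, which simply says the proof is similar to that of the preceding lemma (the convergence of $S_\alpha(\mathbf{x})$ to the maximum), where exactly this splitting into indices in and out of $\mathcal{I}$ is used. Your normalization by $e^{\alpha M}$ is arguably the cleanest way to present the argument.
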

	\begin{proof}
		Similar to Lemma \ref{lemma:convergence_to_max}.
	\end{proof}
	
	\begin{lemma}[Derivatives of $LSE_{\alpha}(\mathbf{x})$]
		\begin{align*}
		\frac{\partial}{\partial x_i} LSE_{\alpha}(\mathbf{x}) &= \frac{e^{\alpha x_i}}{\sum_k e^{\alpha x_k}}, \\
		\frac{\partial^2}{\partial x_i^2} LSE_{\alpha}(\mathbf{x}) &= \alpha \sum_{j \neq i}  \frac{ e^{\alpha(x_i + x_j)}}{\big( \sum_k e^{\alpha x_k}\big)^2}, \\
		\frac{\partial^2}{\partial x_i \partial x_j} LSE_{\alpha}(\mathbf{x}) &= - \alpha   \frac{ e^{\alpha(x_i + x_j)}}{\big( \sum_k e^{\alpha x_k}\big)^2}.
		\end{align*}
	\end{lemma}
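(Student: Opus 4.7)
The statement is a direct calculus computation, so the plan is simply to apply the chain rule to the logarithm and then the quotient rule to the resulting ratio of exponentials. No probabilistic or analytic subtlety is involved; the only thing to be careful about is the bookkeeping when separating the diagonal and off-diagonal second partials.

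First I would compute the gradient. Write $Z(\mathbf{x}) = \sum_k e^{\alpha x_k}$, so that $LSE_\alpha(\mathbf{x}) = \alpha^{-1}\log Z(\mathbf{x})$. Since $\partial_i Z = \alpha\, e^{\alpha x_i}$, the chain rule immediately gives
\begin{equation*}
\frac{\partial}{\partial x_i} LSE_\alpha(\mathbf{x}) = \frac{1}{\alpha}\,\frac{\partial_i Z}{Z} = \frac{e^{\alpha x_i}}{\sum_k e^{\alpha x_k}},
\end{equation*}
which is the first identity. Denote this ratio by $p_i(\mathbf{x})$.

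Next I would differentiate $p_i$. For $j \neq i$, the numerator $e^{\alpha x_i}$ does not depend on $x_j$, so the quotient rule yields
\begin{equation*}
\frac{\partial p_i}{\partial x_j} = -\,\frac{e^{\alpha x_i}\cdot \alpha e^{\alpha x_j}}{Z(\mathbf{x})^2} = -\alpha\,\frac{e^{\alpha(x_i+x_j)}}{\big(\sum_k e^{\alpha x_k}\big)^2},
\end{equation*}
which gives the off-diagonal second partial. For the diagonal term, the quotient rule gives
\begin{equation*}
\frac{\partial p_i}{\partial x_i} = \frac{\alpha e^{\alpha x_i}\,Z(\mathbf{x}) - e^{\alpha x_i}\cdot \alpha e^{\alpha x_i}}{Z(\mathbf{x})^2} = \alpha\,\frac{e^{\alpha x_i}\big(Z(\mathbf{x})-e^{\alpha x_i}\big)}{Z(\mathbf{x})^2} = \alpha\sum_{j\neq i}\frac{e^{\alpha(x_i+x_j)}}{\big(\sum_k e^{\alpha x_k}\big)^2},
\end{equation*}
where in the last step I used $Z(\mathbf{x}) - e^{\alpha x_i} = \sum_{j \neq i} e^{\alpha x_j}$ and distributed $e^{\alpha x_i}$ inside the sum. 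This matches the third identity.

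There is no real obstacle here; the only minor pitfall is the sign on the off-diagonal term and correctly identifying the sum $\sum_{j\neq i}$ that emerges on the diagonal from $Z - e^{\alpha x_i}$. Once these are handled, the three formulas follow in three lines of algebra.
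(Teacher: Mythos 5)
Your computation is correct and is exactly the routine verification the paper has in mind; the paper's own proof is simply the remark ``Easy calculations,'' so your three lines of chain rule and quotient rule fill in precisely what was intended.
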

	\begin{proof}
		Easy calculations.
	\end{proof}

	Now consider the function
	\begin{equation*}
	f_{\alpha}(\mathbf{x})
	= \sum_{i} \frac{\partial^2}{\partial x_i^2} LSE_{\alpha}(\mathbf{x})
	= - \sum_{i} \sum_{j\neq i} \frac{\partial^2}{\partial x_i \partial x_j} LSE_{\alpha}(\mathbf{x})
	= \alpha \sum_{i} \sum_{j \neq i}  \frac{ e^{\alpha(x_i + x_j)}}{\big( \sum_k e^{\alpha x_k}\big)^2}.
	\end{equation*}
	
	\begin{lemma}\label{lemma:unique_maximizer_f_alpha}
		If $\max_{i} x_i$ is unique, i.e. if $\exists ! \, j^{\star}$ such that $\max_{i} x_i = x_{j^{\star}}$, then
		\begin{equation*}
		\lim_{\alpha \rightarrow \infty} f_{\alpha}(\mathbf{x}) = 0.
		\end{equation*}
	\end{lemma}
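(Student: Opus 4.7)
The plan is to show directly that $f_\alpha(\mathbf{x}) \to 0$ by exploiting the fact that the unique maximizer creates a strict positive gap to the next-largest entry, which forces the numerator to decay exponentially in $\alpha$, beating the polynomial prefactor $\alpha$.

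Let $M := x_{j^\star} = \max_i x_i$ and set $\delta := M - \max_{i \neq j^\star} x_i > 0$, where positivity uses the uniqueness hypothesis. The first step is to factor out $e^{2\alpha M}$ from both numerator and denominator:
\[
f_\alpha(\mathbf{x}) = \alpha \cdot \frac{\sum_i \sum_{j\neq i} e^{-\alpha(2M - x_i - x_j)}}{\bigl(1 + \sum_{k \neq j^\star} e^{-\alpha(M - x_k)}\bigr)^2}.
\]
For the denominator I observe that each exponent $-\alpha(M - x_k) \leq -\alpha \delta$, so the denominator tends to $1$ as $\alpha \to \infty$ and in particular stays bounded away from $0$ for $\alpha$ large.

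For the numerator I split the sum according to whether the index $j^\star$ appears. If $(i,j) = (j^\star, j)$ or $(i, j^\star)$ with $j \neq j^\star$, then $2M - x_i - x_j = M - x_j \geq \delta$, contributing at most $2n\, e^{-\alpha \delta}$ total. If neither $i$ nor $j$ equals $j^\star$, then $2M - x_i - x_j \geq 2\delta$, contributing at most $n(n-1)\, e^{-2\alpha \delta}$. Combining,
\[
0 \leq f_\alpha(\mathbf{x}) \leq \frac{\alpha\bigl(2n\, e^{-\alpha \delta} + n(n-1)\, e^{-2\alpha \delta}\bigr)}{\bigl(1 + \sum_{k \neq j^\star} e^{-\alpha(M - x_k)}\bigr)^2}.
\]
Since $\alpha e^{-\alpha \delta} \to 0$ as $\alpha \to \infty$ and the denominator converges to $1$, the right-hand side tends to $0$, completing the proof.

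There is no genuine obstacle here; the main point to be careful about is recording the uniqueness hypothesis as the strict positivity of the gap $\delta$, which is exactly what allows the exponential factor $e^{-\alpha \delta}$ to dominate the linear growth of $\alpha$. The non-unique case would instead produce a constant $e^{2\alpha M}$ contribution in the numerator from pairs $(i,j)$ with $x_i = x_j = M$, preventing the limit from vanishing.
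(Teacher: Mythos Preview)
Your proof is correct and follows essentially the same approach as the paper: factor out $e^{2\alpha M}$, split the sum according to whether the index $j^\star$ appears, and use that the strict gap forces exponential decay that dominates the prefactor $\alpha$. The only cosmetic difference is that you work with the single minimum gap $\delta$ while the paper keeps the individual gaps $\varepsilon_j = M - x_j$.
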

	\begin{proof}
		Let $ x_{j^{\star}} := \max_{i} x_i$. Since $x_{j^{\star}}$ is the unique maximizer, there exists $\varepsilon_{j} > 0$ such that $x_j = x_{j^{\star}} - \varepsilon_{j}$ for all $j \neq j^{\star}$. Then we have
		\begin{align*}
		f_{\alpha}(\mathbf{x})
		&= \alpha \Bigg[ \sum_{j \neq j^{\star}}  \frac{ e^{\alpha(x_{j^{\star}} + x_j)}}{\big( \sum_k e^{\alpha x_k}\big)^2}
		+ \sum_{i \neq j^{\star}}  \frac{ e^{\alpha(x_i + x_j^{\star})}}{\big( \sum_k e^{\alpha x_k}\big)^2}
		+ \sum_{i \neq j^{\star}} \sum_{\subalign{j &\neq i \\j &\neq j^{\star}}}  \frac{ e^{\alpha(x_i + x_j)}}{\big( \sum_k e^{\alpha x_k}\big)^2} \Bigg] \\
		&= \frac{\alpha}{\big( e^{\alpha x_{j^{\star}}} + \sum_{k \neq j^{\star}} e^{\alpha(x_{j^{\star}} - \varepsilon_k)}\big)^2}
		\Bigg[ 2 \sum_{j \neq j^{\star}}  e^{\alpha(2 x_{j^{\star}} - \varepsilon_j)}
		+ \sum_{i \neq j^{\star}} \sum_{j \neq i}  e^{\alpha(2 x_{j^{\star}} - \varepsilon_i -\varepsilon_j)} \Bigg] \\
		&= \frac{\alpha}{ \big( 1 + \sum_{k \neq j^{\star}} e^{- \alpha  \varepsilon_k} \big)^2}
		\Bigg[ 2 \sum_{j \neq j^{\star}}  e^{-\alpha \varepsilon_j}
		+ \sum_{i \neq j^{\star}} \sum_{j \neq i}  e^{- \alpha( \varepsilon_i +\varepsilon_j)} \Bigg]
		,
		\end{align*}
		and since $\varepsilon_{j}$ is strictly positive for all $j \neq j^{\star}$, in the limit as $\alpha \rightarrow \infty$ the negative exponentials $e^{-\alpha \varepsilon_{j}}$ dominate $\alpha$, and $f_{\alpha} \rightarrow 0$.
	\end{proof}
	
	\begin{lemma}\label{lemma:g_converges_to_dirac}
		Consider the function
		\begin{equation*}
		g_{\alpha}(x) = \alpha \frac{e^{\alpha x}}{(1 + e^{\alpha x})^2}.
		\end{equation*}
		We have that $g_{\alpha}(x) \dx \rightarrow \delta_0$ as $\alpha \rightarrow \infty$ in the sense of weak convergence of measures, where $\delta_0$ denotes the Dirac mass at 0.
	\end{lemma}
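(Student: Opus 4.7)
The plan is to recognize $g_\alpha$ as the derivative of a translated/rescaled sigmoid function, which both pins down its total mass and gives explicit control of the tails.

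First I would observe that
\[
g_\alpha(x) = \frac{d}{dx}\Bigl(\frac{e^{\alpha x}}{1+e^{\alpha x}}\Bigr) = \frac{d}{dx}\sigma(\alpha x),
\]
where $\sigma(y) = (1+e^{-y})^{-1}$ is the logistic sigmoid. From this antiderivative it is immediate that $g_\alpha \ge 0$ and $\int_{-\infty}^{\infty} g_\alpha(x)\dx = \sigma(+\infty) - \sigma(-\infty) = 1$, so $g_\alpha(x)\dx$ is a probability measure for every $\alpha > 0$. A direct check also shows $g_\alpha(-x) = g_\alpha(x)$, i.e.\ the measure is symmetric about the origin.

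Next, I would show that the mass concentrates at $0$. For any $\varepsilon > 0$, the explicit antiderivative gives
\[
\int_{\varepsilon}^{\infty} g_\alpha(x)\dx = 1 - \sigma(\alpha\varepsilon) = \frac{1}{1+e^{\alpha\varepsilon}} \xrightarrow{\alpha \to \infty} 0,
\]
and by symmetry $\int_{-\infty}^{-\varepsilon} g_\alpha(x)\dx \to 0$ as well. Consequently $\int_{|x|\le\varepsilon} g_\alpha(x)\dx \to 1$ for every $\varepsilon > 0$.

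Finally, to conclude weak convergence, I would take an arbitrary $\phi \in C_b(\mathbb{R})$, fix $\varepsilon>0$, and split
\[
\int_{\mathbb{R}} \phi(x)\,g_\alpha(x)\dx - \phi(0)
= \int_{|x|\le\varepsilon} \bigl(\phi(x)-\phi(0)\bigr) g_\alpha(x)\dx
+ \int_{|x|>\varepsilon} \bigl(\phi(x)-\phi(0)\bigr) g_\alpha(x)\dx.
\]
The first integral is bounded in absolute value by $\sup_{|x|\le\varepsilon}|\phi(x)-\phi(0)|$, which can be made arbitrarily small by choosing $\varepsilon$ small (continuity of $\phi$ at $0$), and the second is bounded by $2\|\phi\|_\infty \int_{|x|>\varepsilon} g_\alpha(x)\dx$, which tends to $0$ by the tail estimate above. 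Letting $\alpha\to\infty$ and then $\varepsilon\to 0$ gives $\int \phi\, g_\alpha \to \phi(0)$, which is exactly weak convergence $g_\alpha(x)\dx \rightharpoonup \delta_0$. There is no real obstacle here; the only thing worth stating carefully is that the antiderivative identity makes both the normalization and the tail bound completely explicit, so no approximation argument is needed.
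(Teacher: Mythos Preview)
Your proof is correct and follows essentially the same approximate-identity argument as the paper: both verify $\int g_\alpha = 1$, split $\int \phi\, g_\alpha - \phi(0)$ into a near-zero piece controlled by continuity of $\phi$ and a tail piece that vanishes as $\alpha\to\infty$. The only cosmetic difference is that you use the explicit antiderivative $\sigma(\alpha x)$ to compute the tail mass exactly, whereas the paper bounds $g_\alpha$ pointwise on the tails.
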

	\begin{proof}
		First, note that for all $\alpha >0$
		\begin{equation*}
		\int_{\R} g_{\alpha}(x) \dx = 1.
		\end{equation*}
		Consider any continuous bounded function $\varphi(x) \in C_b(\R)$. For all $\varepsilon > 0$ there exists a $\delta>0$ such that
		\begin{align*}
		\Big| \int_{\R} \varphi(x) g_{\alpha}(x) \dx - \varphi(0) \Big|
		&\le \int_{\R} g_{\alpha}(x) \big| \varphi(x) -\varphi(0) \big|  \dx  \\
		&\le \varepsilon \int_{-\delta}^{\delta} g_{\alpha}(x) \dx
		+ \int_{-\infty}^{-\delta} \alpha e^{- \alpha \delta }  \big| \varphi(x) -\varphi(0) \big| \dx \\
		&\quad+ \int_{\delta}^{\infty} \frac{\alpha}{1 + e^{\alpha \delta }}  \big| \varphi(x) -\varphi(0) \big| \dx \\
		&\le \varepsilon + \int_{-\infty}^{-\delta} \alpha e^{- \alpha \delta }  \big| \varphi(x) -\varphi(0) \big| \dx 
		+ \int_{\delta}^{\infty} \frac{\alpha}{1 + e^{\alpha \delta }}  \big| \varphi(x) -\varphi(0) \big| \dx.
		\end{align*}
		Taking the limit as $\alpha \rightarrow \infty$, the last two integrals go to 0. Hence the limit of the left-hand side is less then $\varepsilon$ for any $\varepsilon >0$, so we are done.
	\end{proof}
	
	We now move on to studying the dynamics of the maximum of a family of continuous semimartingales driven by a common Brownian motion. Note that we specifically deal with semimartingales which have absolutely continuous finite variation part, which implies that their local times have a bicontinuous modification in $t \in \R^+$ and $a \in \R$ (see Definition~\ref{def:local_time}). This is the case for all stochastic processes which can be written as the solution of an It\^o SDE with integrable drift and stochastic term driven by a semimartingale with absolutely continuous finite variation.
	
	Consider a family of $\R$-valued continuous semimartingales $\mathbf{X}_t = \{ X^i_t\}_{i = 0}^n$ with dynamics
	\begin{equation}\label{eq:example_semimart}
	\di X^i_t = b_t^i \dt + \sigma_t^i \di B_t,
	\end{equation}
	where $b^i_t$ and $\sigma^i_t$ are (real, predictable, stochastically integrable) drift and diffusion coefficients for all $i = 0, \dots, n$, and $B_t$ is a standard Brownian motion.

	We apply It\^o's Lemma to derive the dynamics of $LSE_{\alpha}(\boldsymbol{X}_{\cdot})(t)$ as
	\begin{align}
	&\di LSE_{\alpha}(\boldsymbol{X}_{\cdot})(t)\\
	&= \sum_{i = 0}^n \frac{e^{\alpha X^i_t} }{\sum_{k} e^{\alpha X^k_t}} \di X^i_t
	+ \frac{1}{2} \sum_{i=0}^n \sum_{j=0}^n \alpha e^{\alpha X_t^i} \bigg( \frac{\delta_{ij}}{\sum_{k} e^{\alpha X^k_t}}
	- \frac{e^{\alpha X^j_t}}{(\sum_{k} e^{\alpha X^k_t})^2}   \bigg) \di \langle X^i_{\cdot}, X^j_{\cdot}  \rangle_t \nonumber \\
	&= \sum_{i = 0}^n \frac{e^{\alpha X^i_t} }{\sum_{k} e^{\alpha X^k_t}} b_t^i \dt
	+ \sum_{i = 0}^n \frac{e^{\alpha X^i_t} }{\sum_{k} e^{\alpha X^k_t}} \sigma_t^i \di B_t
	+ \frac{1}{2} f_{\alpha}(\mathbf{X}_{\cdot}, \boldsymbol{\sigma}_{\cdot})(t) \dt, \label{eq:LSE_X}
	\end{align}
	where we have written $\delta_{ij}$ for the Kronecker delta and defined the function
	\begin{equation}\label{eq:f_alpha_function}
	f_{\alpha}(\mathbf{X}_{\cdot}, \boldsymbol{\sigma}_{\cdot})(t) := \alpha \sum_{i} \sum_{j \neq i}  \frac{ e^{\alpha(X^i_t + X^j_t)}}{\big( \sum_k e^{\alpha X^k_t}\big)^2} \,\big( (\sigma_t^i)^2 - \sigma_t^i \sigma_t^j \big).
	\end{equation}
	
	We rewrite \eqref{eq:LSE_X} in integral form as follows, for all $s \le t$,
	\begin{align}
	LSE_{\alpha}(\boldsymbol{X}_{\cdot})(t)
	&= LSE_{\alpha}(\boldsymbol{X}_{\cdot})(s)
	+ \int_s^t S^{arg}_{\alpha}(\mathbf{X}_{\cdot}, \mathbf{b}_{\cdot})(r)\di r
	+ \int_s^t S^{arg}_{\alpha}(\mathbf{X}_{\cdot}, \boldsymbol{\sigma}_{\cdot})(r)\di B_r \nonumber \\
	&\quad+ \frac{1}{2} \int_s^t f_{\alpha}(\mathbf{X}_{\cdot}, \boldsymbol{\sigma}_{\cdot})(r) \di r \label{eq:LSE_X_integral}.
	\end{align}
	We are interested in the limit of the above when we send $\alpha$ to infinity. For each time $t$, define the argmax of $\mathbf{X}_t$ by $\mathcal{I}_t = \{ j \in \mathbf{N} \, : \, X^j_t \ge X^i_t \:\, \forall i \in \mathbf{N} \}$. Since $S^{arg}_{\alpha}(\mathbf{X}_{\cdot}, \mathbf{b}_{\cdot})(r) \le \max_{i} b_t^i$, and $b^i$ is integrable for all $i$ by assumption, we can apply dominated convergence to yield
	\begin{equation*}
	\lim_{\alpha \rightarrow \infty} \int_0^t S^{arg}_{\alpha}(\mathbf{X}_{\cdot}, \mathbf{b}_{\cdot})(s)\ds = \int_0^t \frac{1}{|\mathcal{I}_s|} \sum_{j \in \mathcal{I}_s} b^j_s \ds.
	\end{equation*}
	Similarly, $\max_{i} \sigma_t^i$ is integrable against $B_t$, so we can apply dominated convergence for stochastic integrals and get
	\begin{equation*}
	\lim_{\alpha \rightarrow \infty} \int_s^t S^{arg}_{\alpha}(\mathbf{X}_{\cdot}, \boldsymbol{\sigma}_{\cdot})(r)\di B_r = \int_s^t \frac{1}{|\mathcal{I}_r|} \sum_{j \in \mathcal{I}_r} \sigma^j_r\di B_r.
	\end{equation*}

	The last integral on the right hand side of \eqref{eq:LSE_X_integral} is trickier to deal with. 
	
	\begin{prop}\label{prop:local_time_bound}
		Consider a family of continuous semimartingales $\mathbf{X}_t = \{ X^i_t\}_{i = 0}^n$ with dynamics given by \eqref{eq:example_semimart}. Let $f_{\alpha}$ be defined as in \eqref{eq:f_alpha_function}. Then for all $s \le t$
		\begin{equation*}
		\lim_{\alpha \rightarrow \infty} \int_s^t f_{\alpha}(\mathbf{X}_{\cdot}, \boldsymbol{\sigma}_{\cdot})(r) \di r
		\le \sum_{i} \sum_{j > i} \Big(L^{0}_t(X^i_{\cdot} - X^j_{\cdot}) - L^{0}_s(X^i_{\cdot} - X^j_{\cdot}) \Big) \quad \text{a.s.}
		\end{equation*} 
	\end{prop}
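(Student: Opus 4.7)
The plan is to reduce the sum defining $f_\alpha$ to a collection of one-variable densities applied to the pairwise differences $X^i - X^j$, and then to apply the occupation times formula to convert the Lebesgue integral into a spatial integral against the local times of those differences.

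The first step is to symmetrize the double sum in \eqref{eq:f_alpha_function}. Pairing the $(i,j)$ and $(j,i)$ contributions and using that $(\sigma^i)^2 - \sigma^i\sigma^j + (\sigma^j)^2 - \sigma^j\sigma^i = (\sigma^i - \sigma^j)^2$, I would write
\begin{equation*}
f_\alpha(\mathbf{X}_\cdot, \boldsymbol{\sigma}_\cdot)(t) = \sum_{i}\sum_{j > i} \frac{\alpha\, e^{\alpha(X^i_t + X^j_t)}}{\bigl(\sum_k e^{\alpha X^k_t}\bigr)^2} (\sigma^i_t - \sigma^j_t)^2.
\end{equation*}
The point of isolating $(\sigma^i - \sigma^j)^2$ is that this is the density of $\di\langle X^i - X^j\rangle_t$. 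Next I would drop the terms $e^{\alpha X^k}$ with $k \neq i,j$ from the denominator (they are non-negative), giving the upper bound
\begin{equation*}
\frac{\alpha\, e^{\alpha(X^i_t + X^j_t)}}{\bigl(\sum_k e^{\alpha X^k_t}\bigr)^2} \le \frac{\alpha\, e^{\alpha(X^i_t + X^j_t)}}{\bigl(e^{\alpha X^i_t} + e^{\alpha X^j_t}\bigr)^2} = \frac{\alpha}{4\cosh^2\!\bigl(\tfrac{\alpha}{2}(X^i_t - X^j_t)\bigr)} = g_\alpha(X^i_t - X^j_t),
\end{equation*}
where the last equality uses the identity verified in Lemma~\ref{lemma:g_converges_to_dirac}. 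Hence $f_\alpha \le \sum_{i}\sum_{j>i} g_\alpha(X^i - X^j)(\sigma^i - \sigma^j)^2$.

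For each pair $(i,j)$ with $j > i$, the process $Z^{ij}_t = X^i_t - X^j_t$ is a continuous semimartingale with $\di\langle Z^{ij}\rangle_t = (\sigma^i_t - \sigma^j_t)^2\dt$. Applying the occupation times formula (see e.g.~\cite[Chapter~VI, Corollary~1.6]{revuz_yor}), I would rewrite
\begin{equation*}
\int_s^t g_\alpha(Z^{ij}_r) (\sigma^i_r - \sigma^j_r)^2 \di r = \int_{\R} g_\alpha(a)\bigl(L^a_t(Z^{ij}) - L^a_s(Z^{ij})\bigr) \di a.
\end{equation*}
By Definition~\ref{def:local_time} and the standing remark on bicontinuity, the map $a \mapsto L^a_t(Z^{ij}) - L^a_s(Z^{ij})$ is a.s.~continuous. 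Moreover, since $Z^{ij}$ has continuous paths on $[s,t]$, its range is a.s.~bounded, so this map has compact support. Hence it is a.s.~a bounded continuous function of $a$. By Lemma~\ref{lemma:g_converges_to_dirac} the probability measures $g_\alpha(a)\di a$ converge weakly to $\delta_0$, so
\begin{equation*}
\lim_{\alpha \to \infty} \int_{\R} g_\alpha(a) \bigl(L^a_t(Z^{ij}) - L^a_s(Z^{ij})\bigr) \di a = L^0_t(Z^{ij}) - L^0_s(Z^{ij}) \quad \text{a.s.}
\end{equation*}
Summing these limits over $i$ and $j > i$ and combining with the upper bound on $f_\alpha$ yields the claimed inequality.

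The main delicate point will be the last step: Lemma~\ref{lemma:g_converges_to_dirac} gives weak convergence of $g_\alpha(a)\di a$ to $\delta_0$ when tested against a deterministic function in $C_b(\R)$, but here the test function $a \mapsto L^a_t(Z^{ij}) - L^a_s(Z^{ij})$ is random. This is resolved by fixing $\omega$ outside a null set (on which the local time has the bicontinuous modification) and applying the lemma pathwise, which is legitimate because a.s.~pathwise continuity plus compact support places this function in $C_b(\R)$ $\omega$-by-$\omega$. A minor sanity check worth doing is that the bound obtained by discarding the $k \neq i,j$ exponentials is not too wasteful: indeed, on the set where the maximum of $\mathbf{X}_t$ is achieved uniquely by some index $k^\star \notin \{i,j\}$, the pre-limit integrand already vanishes in the limit, so the inequality is not tight only on the negligible (in Lebesgue measure) event that two or more components coincide at the maximum, which is exactly where local times concentrate.
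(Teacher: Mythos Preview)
Your proof is correct and follows essentially the same route as the paper: symmetrize the double sum into $(\sigma^i-\sigma^j)^2$, bound the prefactor by $g_\alpha(X^i-X^j)$ by dropping the extra exponentials in the denominator, apply the occupation times formula, and invoke the weak convergence of $g_\alpha$ to $\delta_0$. The only notable difference is in the last step: the paper cites the Barlow--Yor bound to ensure $a\mapsto L^a_t$ is bounded and hence a legitimate test function, whereas you argue directly that $a\mapsto L^a_t(Z^{ij})-L^a_s(Z^{ij})$ has compact support (since $Z^{ij}$ is continuous on the compact interval $[s,t]$), which together with bicontinuity places it in $C_b(\R)$ pathwise. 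Your argument is more elementary and avoids the external reference; both are valid.
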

	\begin{proof}
		Exploiting symmetry, we start by rewriting $f_{\alpha}(\mathbf{X}_{\cdot}, \boldsymbol{\sigma}_{\cdot})(t)$ as
		\begin{equation*}
		f_{\alpha}(\mathbf{X}_{\cdot}, \boldsymbol{\sigma}_{\cdot})(t)
		= \frac{1}{2} \alpha \sum_{i} \sum_{j \neq i}  \frac{ e^{\alpha(X^i_t + X^j_t)}}{\big( \sum_k e^{\alpha X^k_t}\big)^2} \,\big( \sigma_t^i - \sigma_t^j\big)^2,
		\end{equation*}
		and hence note that the last integral on the right-hand side of $\eqref{eq:LSE_X_integral}$ is always positive. Moreover, with $g_{\alpha}$ as in Lemma~\ref{lemma:g_converges_to_dirac},
		\begin{align*}
		\frac{ \alpha e^{\alpha(X^i_t + X^j_t)}}{\big( \sum_k e^{\alpha X^k_t}\big)^2}
		&= \frac{\alpha}{2 + e^{\alpha (X^i_t - X^j_t)} + e^{\alpha (X^j_t - X^i_t)} + \sum_{k \neq i,j} \sum_{l \neq i,j} e^{\alpha (X^k_t + X^l_t - X^i_t - X^j_t)}} \\
		&\le \frac{\alpha e^{\alpha(X^i_t - X^j_t)}}{(1 + e^{\alpha(X^i_t - X^j_t)})^2}
		= g_{\alpha}(X^i_{\cdot} - X^j_{\cdot})(t).
		\end{align*}
		The occupation time formula (see e.g. \cite[Chapter~6,~Corollary~1.6]{revuz_yor}) yields
		\begin{align*}
		\lim_{\alpha \rightarrow \infty} \int_s^t f_{\alpha}(\mathbf{X}_{\cdot}, \boldsymbol{\sigma}_{\cdot})(r) \di r
		&\le \lim_{\alpha \rightarrow \infty} \frac{1}{2} \int_s^t \sum_{i} \sum_{j \neq i} g_{\alpha} (X^i_{\cdot} - X^j_{\cdot})(r) \big( \sigma_r^i - \sigma_r^j\big)^2 \di r \\
		&= \sum_{i} \sum_{j \neq i} \lim_{\alpha \rightarrow \infty} \frac{1}{2} \int_s^t  g_{\alpha} (X^i_{\cdot} - X^j_{\cdot})(r) \di \langle X^i_{\cdot} - X^j_{\cdot} \rangle_r \\
		&= \sum_{i} \sum_{j \neq i} \lim_{\alpha \rightarrow \infty} \frac{1}{2} \int_{\R}  g_{\alpha} (z) \Big( L_t^z( X^i_{\cdot} - X^j_{\cdot}) - L_s^z( X^i_{\cdot} - X^j_{\cdot}) \Big)  \di z \\
		&= \frac{1}{2}\sum_{i} \sum_{j \neq i}  \Big( L^{0}_t(X^i_{\cdot} - X^j_{\cdot}) - L^{0}_s(X^i_{\cdot} - X^j_{\cdot})  \Big)
		\end{align*}
		almost surely, 	where the final equality relied on the weak convergence of $g_{\alpha}(z) \di z$ to a Dirac mass at 0 by Lemma \ref{lemma:g_converges_to_dirac}. Note that the $g_{\alpha}(z)$ are not compactly supported (compare with Definition~\ref{def:local_time}), but this is fine since the local time $ L_t^z( X^i_{\cdot} - X^j_{\cdot})$ is bounded in $z$ a.s.~(see Barlow and Yor \cite[Corollary~5.2.2]{barlow_yor_82}).
	\end{proof} 
	
	\section{Numerical experiments}\label{app:numerics}
	
	In this appendix we provide some details about the simulations for the plots in Figure~\ref{fig:hilber_error_2d_100d}. For the sake of comparison between the different dimensions, we give the rate matrix $Q$ a fixed structure, and keep the contraction coefficient constant across dimensions.
	
	For $n = 2, 20, 50, 100$, we take the signal process $X$ to be a Markov chain on $n+1$ states $\{0, \dots, n \}$ such that if $X$ is at state $i$ at some time $t$, it will be equally likely to jump to state $i+1$ or $i-1$, while it will only jump to state $j \neq i \pm 1$ with much lower probability. In other words, the chain switches quickly between a state and its two closest neighbours, but it only mixes slowly with the states further away. We let the jump rate from state $i$ grow with the dimension of the chain: for $n \ge 3$, we set the off-tridiagonal entries of $Q = (q_{ij})$ to be 1, the upper and lower diagonals to be $n+1$, and therefore the diagonal to be $-3n$, i.e.
	\begin{equation*}
	(q_{ij}) = \left\{
	\begin{array}{ll}
	n+1, & \mathrm{if} \: j \equiv i\pm 1 \pmod n, \\
	-3n, & \mathrm{if} \: j = i, \\
	1,   & \mathrm{otherwise.}
	\end{array}
	\right.
	\end{equation*}
	For $n = 2$, we simply take $Q$ to be the symmetric matrix with $-2$ on the diagonal and 1 in the other entries. By fixing $Q$ this way for all $n$, we have that the contraction rate from Theorem~\ref{thm:contraction} is $\lambda = 2$, and does not change across all dimensions.
	
	The chain $X$ has uniform stationary distribution, which we denote by $\mu = (\frac{1}{n}, \dots, \frac{1}{n})$. This is the point at the centre of the probability simplex $\mathcal{S}^n$. We take $\mathrm{law}(X_0) = \mu$. Finally, we set the sensor function $h \in \R^{n+1}$ to be a randomly generated vector such that, for each $i \in \mathbf{N}$, $h^i = z_i + x_i$, where $z_i$ is a random integer in $\{-10, \dots, 10\}$, and $x_i$ is a realization of a uniform random variable in $[0,1]$.
	
	The initial condition for the optimal filter $\pi_t$ is $ \pi_0 = \mu$. The `wrong' Wonham filter $\tilde \pi_t$ is initialized at $\nu \neq \mu$: to determine $\nu$, we perturb $\mu$ by adding/subtracting $\frac{1}{2}\min_i \mu_i$ from all the components of $\mu$ according to $n+1$ independent Bernoulli random variables, and renormalizing.
	
	Having fixed all these parameters, we generate 300 sample paths for the signal and the observation processes, and compute the optimal and `wrong' Wonham filters by solving the Zakai equation (see e.g. \cite[Remark~3.26]{bain09}) with a simple Euler scheme and renormalizing after each step. We plot the realizations of the Hilbert error $\hilbert (\pi_t, \tilde \pi_t)$, together with the bounds from Theorem~\ref{thm:contraction} and Proposition~\ref{prop:pathwise_bound_stability_ode}. Note that the bounds from Proposition~\ref{prop:pathwise_bound_stability_ode} are path-dependent (to compute them we need to observe $\tilde \pi_t$), so for each realization of the Hilbert error we have corresponding realizations of the bounds from Proposition~\ref{prop:pathwise_bound_stability_ode}. They are also expressed as bounds for $\tanh (\hilbert(\pi_t, \tilde \pi_t)/4)$ (as opposed to $\hilbert(\pi_t, \tilde \pi_t)$). Taking $\arctanh$ on both sides of \eqref{eq:tanh_less_u_stability}, and multiplying by 4, yields that $\hilbert(\pi_t, \tilde \pi_t) \le 4 \arctanh(u_t)$, where $u_t$ solves \eqref{eq:ode_tanh_stability};~given the potential for the dynamics of $u$ to have very large Lipschitz coefficients, we use a tamed Euler scheme (see e.g.~Hutzenthaler, Jentzen and Kloeden \cite{hutzenthaler2012}) to solve the ODE numerically. Concavity and monotonicity of $\tanh$ yield $\hilbert (\pi_t, \tilde \pi_t) \le \hilbert (\mu, \nu) e^{- \int_0^t \tilde \lambda_s \ds}$ from \eqref{eq:pathwise_ineq_tanh};~we compute $\tilde \lambda_t = 2 \min_{i \neq k} \big( q_{ik}q_{ki} + \sum_{j \neq i,k} \tilde \pi_t^j \min \{ q_{ji}q_{ik}/\tilde \pi_t^k, q_{jk}q_{ki}/\tilde \pi_t^i \} \big)^{1/2}$ at each timestep and perform numerical integration to plot the bound.


	\newpage
	\bibliographystyle{plain}
	\bibliography{biblio}
	
\end{document}